\newcommand{\R}{\mathbb R}
\pgfplotsset{width=\linewidth,compat=newest}
\definecolor{color1}{RGB}{0,139,0} % Color of the article title and sections
\definecolor{color2}{RGB}{154,255,154} % Color of the boxes behind the abstract and headings
\numberwithin{equation}{section}
\newcommand{\Z}{\mathbb{Z}} 
\newcommand{\N}{\mathbb{N}} 
\newcommand{\ra}{\rightarrow}
\newcommand{\abs}[1]{\left |#1\right |}
\newcommand{\EulerGamma}{\gamma}
\newcommand{\push}{\emph{push}\xspace}
\newcommand{\Push}{\emph{push}\xspace}
\def\@endtheorem{\endtrivlist}
\newtheoremstyle{abcd}% name
  {}%      Space above, empty = `usual value'
  {}%      Space below
  {\itshape}% Body font
  {}%         Indent amount (empty = no indent, \parindent = para indent)
  {\bfseries}% Thm head font
  {.}%        Punctuation after thm head
  {.5em}% Space after thm head: \newline = linebreak
  {}%         Thm head spec
\theoremstyle{abcd}
\newtheorem{theorem}{Theorem}
\numberwithin{theorem}{section}
\newtheorem{definition}[theorem]{Definition}
\newtheorem{corollary}[theorem]{Corollary}
\newtheorem{lemma}[theorem]{Lemma}
\newtheorem{remark}[theorem]{Remark}
\DeclarePairedDelimiter{\ceil}{\lceil}{\rceil}
\DeclarePairedDelimiter{\floor}{\lfloor}{\rfloor}
\begin{document}

\title{Asymptotics for Push on the Complete Graph}

\author{Rami Daknama$^1$ \and Konstantinos Panagiotou$^1$ \and Simon Reisser$^1$}

%\flushbottom % Makes all text pages the same height
%\Keywords{Rumour Spreading --- Local Resilience --- Robustness --- Szem\`eredis regularity lemma}
\date{$^1$Ludwig-Maximilians-Universität München\\~\\24$\textnormal{th}$ March, 2020}
%\date{}
\maketitle % Print the title and abstract box

\begin{abstract}
We study the popular randomized rumour spreading protocol \push. Initially, a node in a graph possesses some information, which is then spread in a round based manner. In each round, each informed node chooses uniformly at random one of its neighbours and passes the information to it. The central quantity to investigate is the \emph{runtime}, that is, the number of rounds needed until every node has received the information.

The \push protocol and variations of it have been studied extensively. Here we study the case where the underlying graph is complete with $n$ nodes. Even in this most basic setting, specifying the limiting distribution of the runtime as well as determining  related quantities, like its expectation, have remained open problems since the protocol was introduced. %To wit, the currently most accurate estimate for the expectation is given in~\cite{doerr_tight_2014}, where it is  shown that it is contained in an explicit interval of length six.

In our main result we describe the limiting distribution of the runtime. We show that it does not converge, and that it becomes, after the appropriate normalization, asymptotically periodic both on the $\log_2n$ as well as on the $\ln n$ scale. In particular, the limiting distribution converges only if we restrict ourselves to suitable subsequences of $\mathbb N$, where simultaneously $\log_2 n-\floor{\log_2n}\to x$ and $\ln n-\floor{\ln n}\to y$ for some fixed $x,y\in [0,1)$. On such subsequences we show that the expected runtime is $\log_2 n+\ln n+h(x,y)+o(1)$, where $h$ is explicitly given and numerically  $|\sup h - \inf h| \approx 2\cdot 10^{-4}$.

This ``double oscillatory'' behaviour has its origin in two key ingredients that were also implicit in previous works: first, an intricate discrete recursive relation that describes how the set of informed nodes grows, and second, a coupon collector problem with batches of size $n$ that takes the lead when the protocol is almost finished. Rounding in the recursion introduces the periodicity on the $\log_2 n$ scale -- as it is the case in many discrete systems -- and rounding in the batched problem is the source of the second periodicity.
\end{abstract}

%----------------------------------------------------------------------------------------

%\tableofcontents % Print the contents section

\thispagestyle{empty} % Removes page numbering from the first page

\section{Introduction}
We consider the well-known and well-studied rumour spreading protocol \textit{Push}. It has applications in replicated databases \cite{demers_epidemic_1987}, multicast \cite{birman1999bimodal} and blockchain technology \cite{8975840}. \textit{Push} operates on graphs and proceeds in rounds as follows. In the beginning, one node has a piece of information. In subsequent rounds each informed node chooses a neighbour independently and uniformly at random and informs it. For a graph $G=(V,E)$ with $|V|=n$ and a node $v\in V$  we denote by $X(G,v)$ the (random) number of rounds needed to inform all  nodes, where at the beginning of the first round only $v$ knows the information. We call $X(G,v)$ the \emph{runtime} (on $G$ with start node $v$). The most basic case, and the one that we study here, is when $G$ is the complete graph $K_n$. Since in that case the initially informed node makes no difference, we will abbreviate $X(K_n,v)=X_n$ for any starting node $v$. 

\paragraph{Related Work}
There are several works studying the runtime of \push on the complete graph. The first paper considering this protocol is by Frieze and Grimmett~\cite{frieze_the_1985}, who showed that with high probability (whp), that is,  with probability $1-o(1)$ as $n\to\infty$, that 
$$X_n = \log_2 n  + \ln n  + o(\ln n).$$
Moreover, they obtained bounds for (very) large deviations of $X_n$ from its expectation.
%In particular, for any $\varepsilon, \delta>0$
%$$P\Big[X_n > (1+\varepsilon)(1+(1+\delta)\ln 2)\log_2n\Big]=o(n^{-\delta}).$$
In \cite{pittel_on_1987}, Pittel improved upon the results in~\cite{frieze_the_1985}, in particular, he showed that for any $f:\N\ra \R^+$ with $f=\omega(1)$, whp,
$$|X_n - \log_2n-\ln n|\leq f(n).$$ 
The currently most precise result in this context was obtained by  Doerr and Künnemann~\cite{doerr_tight_2014}, who considered in great detail the distribution of $X_n$. They showed that $X_n$ can be stochastically bounded (from both sides) by coupon collector type problems. This gives a lot of control regarding the distribution of $X_n$, and it allowed them to derive, for example, very sharp bounds for tail probabilities. Apart from that, it enabled them to consider related quantities, as for example the expectation of $X_n$. Among other results, their bounds on the distribution of $X_n$ imply that  
\begin{align}
\label{push_precise_bound_for_runtime_2014}
\floor*{\log_2n} + \ln n - 1.116 \leq \mathbb{E}[X_n] \leq \ceil*{\log_2n} + \ln n + 2.765,
\end{align}
which pins down the expectation up to a constant additive term. Besides on complete graphs, \push has been extensively studied on several other graph classes. For example, Erdős-Rényi random graphs \cite{feige_randomized_1990, fountoulakis_reliable_2010}, random regular graphs and  expander graphs~\cite{fountoulakis_rumor_2010, Panagiotou2015, Daknama2019}. More general bounds that only depend on some graph parameter have also been derived, e.g.~the diameter  \cite{feige_randomized_1990}, graph conductance~\cite{mosk-aoyama_fast_2008, chierichetti_almost_2010,chierichetti_rumour_2010, giakkoupis_tight_2011} and node expansion \cite{chierichetti_rumour_2010, sauerwald_rumor_2011, giakkoupis_rumor_2012, giakkoupis_tight_2014}.

\paragraph{Results}

In order to state our main result we need some definitions first. Set
\begin{align*}
 g=g^{(1)}:[0,1] \ra [0,1],~~ x \mapsto xe^{x-1} \quad \text{and}\quad g^{(i)}:[0,1] \ra [0,1], ~~ g^{(i)}=g\circ g^{(i-1)}, i\ge 2.
\end{align*}
As we will see later, the function $g$ describes, for a wide range of the parameters, the evolution of the number of uninformed nodes; in particular, if at the beginning of some round there are $xn$ uninformed nodes, then at the end of the same round there will be (roughly) $g(x)n$ uninformed nodes, and after $i$ rounds there will be (roughly) $g^{(i)}(x)n$ uninformed nodes. This fact is not new -- at least for bounded $i$ -- and has been observed long ago, see for example \cite[Lem.~2]{pittel_on_1987}. For $x\in \mathbb{R}$ define the function 
\begin{equation}
\label{eq:c}
c(x)= -x + \lim\limits_{a \ra \infty, a \in \N} \lim\limits_{b \ra \infty, b \in \N} - a + b + \ln\big(g^{(b)}(1-2^{-a-x})\big),
\end{equation}
whose actual meaning will become clear later. We will show that the double limit exists, so that this indeed defines a function $c:\mathbb{R} \to \mathbb{R}$. Moreover, we will show that $c$ is continuous and periodic with period 1, that is,  if we write $\{x\}=x-\floor{x}$  then $c(x) = c(\{x\})$,  and that (numerically)  $|\sup c - \inf c| \approx 10^{-9}$, cf.~Figure~\ref{figure_c}.
%\begin{comment}
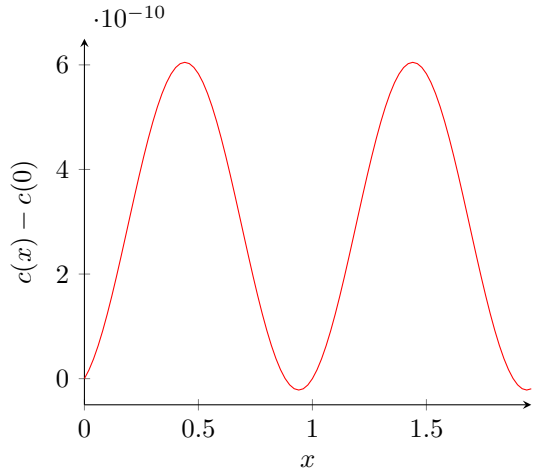
\begin{figure}[t]
    \centering
    \begin{minipage}{0.5\textwidth}
\centering
 		\begin{tikzpicture}
    		\begin{axis}[ymin=-0.00000000005,ymax=0.00000000065, axis x line=bottom, axis y line=left, xlabel=$x$,ylabel=$c(x)-c(0)$]
      			\addplot[no markers, color=red]  file {c.txt};
    		\end{axis}
  		\end{tikzpicture}
    \end{minipage}
		       \label{figure_c}
  		\caption{\small The function $c(x)-c(0)$, $c(0)\approx0.105$, plotted for values of $x$ between 0 and 2. The periodic nature of the 						function and its small amplitude are  evident.}
\end{figure}
%\end{comment}
The Gumbel distribution will play a prominent role in our considerations. 
We say that a real valued random variable $G$ follows a Gum$(\alpha)$ distribution with parameter $\alpha\in \mathbb{R}$,  $G \sim \text{Gum}(\alpha)$, if for all $x \in \R$ 
 $$P[G\leq x]=e^{-e^{-x-\alpha}}, \quad x\in\mathbb{R}.$$
Finally, let $\EulerGamma$ denote the Euler-Mascheroni constant. With all these ingredients we can now state our main result, which specifies -- see also below -- the distribution of the runtime of \push on the complete graph.
\begin{lemma}
\label{main_theorem_push_complete}
Let $G \sim \textnormal{Gum}(\gamma)$. Then, as $n\to\infty$
\begin{align*}
\sup_{k\in \N}\Big |P[X_n \geq k]-P\Big[\big\lceil G + \log_2n +\ln n+\gamma+c(\{\log_2 n\})\big\rceil \geq k\Big]\Big | =o(1). 
\end{align*}
\end{lemma}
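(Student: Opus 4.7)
The plan is to decompose the evolution of Push into three phases. Choose slowly growing integer-valued $a=a(n)\to\infty$ with $a=o(\ln n)$, and $b=b(n)$ tuned so that $U_{T_2}=n^\theta$ for some fixed $\theta\in(1/2,1)$, where $T_1=\floor{\log_2 n}-a$, $T_2=T_1+b$, and $U_t=n-I_t$ is the number of uninformed nodes after $t$ rounds. In Phase~1 (rounds $1$ to $T_1$), one has $I_t\le n2^{-a}=o(n)$, so collisions among the $I_t$ independent neighbour picks are rare. A martingale analysis of $\ln I_t$ shows that the per-round conditional variance is $O(1/n)$, so $\operatorname{Var}(\ln I_{T_1})=O(T_1/n)=o(1)$, which gives $I_{T_1}=2^{T_1}(1+o(1))$ and $U_{T_1}/n=1-2^{-a-\{\log_2 n\}}(1+o(1))$ whp.

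In Phase~2 (rounds $T_1+1$ to $T_2$), the expected one-step update is $U_{t+1}/n=g(U_t/n)+O(n^{-1/2})$. Iterating with the Lipschitz bound on $g$---whose derivative is bounded by $2$ near $y=1$ for a phase of only $O(a+\{\log_2 n\})$ rounds and then becomes strictly below $1$ in the small-$y$ regime---yields $U_{T_2}/n=g^{(b)}(1-2^{-a-\{\log_2 n\}})(1+o(1))$ whp, provided $U_{T_2}\gg\sqrt n$. In Phase~3 ($t\ge T_2$), since $U_{T_2}/n\to 0$ and $I_t/n\to 1$, each uninformed node survives a round with probability $(1-1/(n-1))^{I_t}\to e^{-1}$. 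A Poissonization of the picks (or a direct coupling) lets me approximate the $U_{T_2}$ residual survival times by iid $\textnormal{Geom}(1-e^{-1})$ variables, whose maximum, centred by $\ln U_{T_2}$, converges in distribution to the standard Gumbel (mean $\gamma$). Hence the remaining runtime equals $\ln U_{T_2}+\gamma+G+o(1)$ in distribution with $G\sim\textnormal{Gum}(\gamma)$.

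Combining the three phases,
\begin{align*}
X_n &= T_1+b+\ln U_{T_2}+\gamma+G+o(1) \\
    &= \log_2 n+\ln n+\gamma+G+\Bigl(-\{\log_2 n\}+\bigl(-a+b+\ln g^{(b)}(1-2^{-a-\{\log_2 n\}})\bigr)\Bigr)+o(1),
\end{align*}
and as $a,b\to\infty$ the parenthesised expression tends to $c(\{\log_2 n\})$ by the definition~\eqref{eq:c} of $c$. Since $X_n$ is integer valued, this yields the ceiling form of the statement, and the supremum over $k$ is $o(1)$ because the Gumbel tails confine the relevant range of $k$ to a window of width $O(\log\log n)$ around $\log_2 n+\ln n$, on which the $o(1)$-in-distribution bound translates uniformly to the survival function.

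The hardest step is Phase~3: showing that the survival events for the $U_{T_2}$ uninformed nodes across many rounds are sufficiently close to independent that the coupling with iid geometrics yields $o(1)$ Kolmogorov distance, despite both negative within-round correlation (informed nodes pick distinct targets) and across-round dependence through the random $I_t$. A secondary difficulty is to propagate the $o(1)$ errors of Phase~1 through $b=\Theta(\ln n)$ iterations of $g$ without blow-up; this relies on the contracting action $g'<1$ in the tail of the orbit, which damps the small amplification picked up during the brief doubling regime near $y=1$.
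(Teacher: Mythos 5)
Your three-phase outline parallels the paper's, but your Phase~3 follows a genuinely different route and contains the gap you yourself flag as the hardest step. The paper never approximates survival times by iid geometrics and never needs a coupling or Poissonization argument. Instead it reformulates the remaining process \emph{exactly} as a coupon collector problem: the number of pushes needed to go from $i$ to $i+1$ informed nodes is exactly $T_i \sim \textnormal{Geo}((n-i)/(n-1))$, and these $T_i$ are \emph{exactly} independent. Converting pushes to rounds is done by dividing $T_i$ by (deterministically sandwichable) batch sizes $L_i\le\cdot\le U_i$, giving the two-sided bound $\lceil\sum_i T_i/U_i\rceil\le X_n-(T_1+b)\le\lceil\sum_i T_i/L_i\rceil$, a normalized \emph{sum} of honest independent geometrics. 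The Gumbel limit then comes from the classical Erd\H os--R\'enyi CCP theorem together with Slutsky and Polya, not from extreme-value theory of maxima. This is the key idea that makes the correlation problem you identify (negative within-round association among the survival indicators, positive across-round dependence through the evolving $I_t$) simply disappear, because the reformulation is exact, not approximate. You state the Kolmogorov-distance coupling as the hard open step; the paper's route shows it can be avoided entirely.

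Two secondary deviations compound this. You take $a,b$ growing with $n$ (so $U_{T_2}=n^\theta$ and Phase~2 has $\Theta(\ln n)$ iterations of $g$), whereas the paper fixes $a,b$, keeps $U_{T_2}=\Theta(n)$, proves a clean two-sided bound for those fixed parameters, and only sends $a,b\to\infty$ at the very end using the explicit uniform bounds of Lemmas~\ref{u_l_to_1} and~\ref{conv} and the perturbation stability of Lemma~\ref{finishGumbel}. Your simultaneous limit in $n$, $a(n)$, $b(n)$ with $x=\{\log_2 n\}$ also moving needs uniform-in-$x$ control of the double limit defining $c$, which is precisely the content of Lemma~\ref{c_is_continuous_push_complete} and is not supplied by merely invoking the definition~\eqref{eq:c}. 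Likewise, the supremum over $k$ requires uniform convergence of distribution functions (the paper gets it from Polya's theorem once the continuous Gumbel limit is established); your remark about a window of width $O(\log\log n)$ does not by itself give that uniformity.
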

This lemma does not look completely innocent, and it actually has striking consequences. It readily implies the following result, which establishes that the limiting distribution $X_n$ is periodic both on the $\log_2n$ and on the $\ln n$ scale. In order to formulate it, we need a version of the Gumbel distribution where we restrict ourselves to integers only. More specifically, we say that a random variable $G$ follows a \emph{discrete Gumbel} distribution, $G\sim$ dGum$(\alpha)$, if the domain of $G$ is $\mathbb{Z}$ and
 $$P[G\leq k]=e^{-e^{-k-\alpha}}, \quad  k \in \mathbb{Z}.$$
\begin{theorem}
\label{main_theorem_corollary}
Let $x,y\in [0,1)$ and $(n_i)_{i\in \mathbb{N}}$ be a strictly increasing sequence of natural numbers, such that $\log_2n_i-\floor{\log_2 n_i}\to x $ and  $\ln n_i-\floor{\ln n_i}\to y$ as $i\to \infty$. Then in distribution, as $i\to\infty$
\begin{align*}
X_{n_i} - \big(\floor{\log_2 n_i}+\floor{\ln n_i}\big)\ra \text{\normalfont dGum}(-x-y-c(x)).
\end{align*}
\end{theorem}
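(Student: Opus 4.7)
The plan is to deduce Theorem~\ref{main_theorem_corollary} directly from Lemma~\ref{main_theorem_push_complete} by an explicit calculation with the Gumbel CDF. Because both $X_n$ and the rounded Gumbel variable featuring in the lemma are integer valued, convergence in distribution to an integer-valued limit is equivalent to pointwise convergence of the CDFs at every $k\in\mathbb{Z}$. I would set $a_i:=\lfloor\log_2 n_i\rfloor+\lfloor\ln n_i\rfloor$, $x_i:=\{\log_2 n_i\}$, $y_i:=\{\ln n_i\}$, so that $x_i\to x$ and $y_i\to y$ by hypothesis, and write
\begin{equation*}
Y_{n_i}:=\bigl\lceil G+\log_2 n_i+\ln n_i+\gamma+c(x_i)\bigr\rceil=a_i+\bigl\lceil G+x_i+y_i+\gamma+c(x_i)\bigr\rceil.
\end{equation*}
For a fixed $k\in\mathbb{Z}$, once $i$ is large enough that $k+a_i\in\mathbb{N}$, Lemma~\ref{main_theorem_push_complete} yields $|P[X_{n_i}\geq k+a_i]-P[Y_{n_i}\geq k+a_i]|=o(1)$, i.e.\ $P[X_{n_i}-a_i\leq k]=P[Y_{n_i}-a_i\leq k]+o(1)$.

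Next I would compute $P[Y_{n_i}-a_i\leq k]$ in closed form. Since $k$ is an integer, $\lceil z\rceil\leq k\iff z\leq k$, so using $G\sim\mathrm{Gum}(\gamma)$,
\begin{equation*}
P[Y_{n_i}-a_i\leq k]=P\bigl[G\leq k-x_i-y_i-\gamma-c(x_i)\bigr]=\exp\bigl(-\exp(-k+x_i+y_i+c(x_i))\bigr).
\end{equation*}
The continuity of $c$ (asserted in the paragraph following \eqref{eq:c}) gives $c(x_i)\to c(x)$, so the right-hand side converges to $\exp(-\exp(-k+x+y+c(x)))$, which is exactly the value of the CDF of $\mathrm{dGum}(-x-y-c(x))$ at $k$. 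Pointwise convergence at every integer then delivers the convergence in distribution claimed by the theorem.

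The main obstacle is not in this deduction, which is essentially bookkeeping once the lemma is in hand; the substantive content sits entirely inside Lemma~\ref{main_theorem_push_complete}. The two features of the lemma on which the argument relies are (i) the \emph{uniformity} of the $o(1)$ error over $k\in\mathbb{N}$, which is what allows one to shift the index by $a_i\to\infty$ without losing control, and (ii) the fact that the shift inside the ceiling contains both the fractional part of $\log_2 n$ (through $c$ and through $\log_2 n$ itself) and the fractional part of $\ln n$ (through $\ln n$), so that both oscillating contributions survive the limit and together produce the double-periodic behaviour of Theorem~\ref{main_theorem_corollary}.
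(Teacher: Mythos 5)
Your argument is correct and follows essentially the same route as the paper: substitute $k=a_i+t$, split off the integer part $a_i$ from the ceiling, reduce to the Gumbel CDF, and pass to the limit via continuity of $c$. The only cosmetic difference is that the paper additionally invokes Lemma~\ref{finishGumbel} to absorb the error from replacing $x_i,y_i,c(x_i)$ by $x,y,c(x)$, whereas you sidestep this by writing the CDF in closed form and appealing directly to continuity of the map $(x_i,y_i)\mapsto\exp(-\exp(-k+x_i+y_i+c(x_i)))$; both routes are valid, and yours is arguably slightly more streamlined.
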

Some remarks are in place. First, it is a priori not obvious (at least it was not to us) that subsequences as required in the theorem indeed exist. They do, and the fundamental reason for this is that real numbers can be approximated arbitrarily well by rational numbers; we include a short proof of the existence in the Appendix. Second, it is a priori not clear that $x + c(x)$ is not constant for $x\in[0,1)$. If it was constant,  Theorem~\ref{main_theorem_corollary} would imply that the limiting distribution of $X_n$ is periodic on the $\ln n$ scale only. Although we didn't manage to \emph{prove} that $x + c(x)$ is not constant, we have stong numerical evidence that it indeed is not so. In particular, as we shall also see later, the double limit in the definition of $c$ converges exponentially fast and thus it is not difficult to obtain accurate estimates for it and explicit error bounds. We leave it as an open problem to study the behavior of $c$ more accurately.

Our next result addresses moments of $X_n$. Bounds given in previous works, for example in~\cite{doerr_tight_2014}, guarantee that $X_n - \log_2 n- \ln n$ and all integer powers of it are uniformly integrable. This allows us to conclude that the expectation and all of its moments also converge. 
\begin{theorem}
\label{main_theorem_expectation}
Let $x,y\in [0,1)$ and $(n_i)_{i\in \mathbb{N}}$ be a strictly increasing sequence of natural numbers, such that $\log_2n_i-\floor{\log_2 n_i}\to x $ and  $\ln n_i-\floor{\ln n_i}\to y$ as $i\to \infty$. Then for all $k\in \N$, as $i\to\infty$
\begin{align*}
\mathbb{E}\Big[\big(X_{n_i} - (\floor{\log_2 n_i}+\floor{\ln n_i})\big)^k\Big] \rightarrow \mathbb{E}\Big[\big(\text{\normalfont dGum}(-x-y-c(x))\big)^k\Big]. 
\end{align*}
\end{theorem}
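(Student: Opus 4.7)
The plan is to combine the distributional convergence from Theorem~\ref{main_theorem_corollary} with a uniform integrability argument for the $k$-th powers. Recall the standard fact from probability theory: if $Y_i \to Y$ in distribution and $(|Y_i|^k)_{i\in\mathbb{N}}$ is uniformly integrable, then $\mathbb{E}[Y_i^k] \to \mathbb{E}[Y^k]$. Theorem~\ref{main_theorem_corollary} already supplies the first ingredient with $Y_i = X_{n_i} - (\floor{\log_2 n_i}+\floor{\ln n_i})$ and $Y \sim \text{dGum}(-x-y-c(x))$. What remains is to verify uniform integrability, which should follow rather directly from the existing literature on \push.

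To establish uniform integrability of $|X_n - \log_2 n - \ln n|^k$, I would appeal to the tail estimates derived in~\cite{doerr_tight_2014}. Their stochastic domination by coupon-collector-type random variables yields bounds of the form $P[|X_n - \log_2 n - \ln n| \ge t] \le C e^{-c t}$ for some absolute constants $c, C > 0$ and all sufficiently large $t$ (uniformly in $n$). Such exponentially decaying tails are enough to conclude that, for every fixed $k \in \mathbb{N}$, the family $\{|X_n - \log_2 n - \ln n|^{k+1}\}_{n \in \mathbb{N}}$ is uniformly bounded in $L^1$, which is a well-known sufficient condition for the $k$-th powers being uniformly integrable (the extra factor $|\cdot|^{\varepsilon}$ with $\varepsilon = 1$ suffices).

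Next, I would transfer this uniform integrability from the centering $\log_2 n + \ln n$ to the centering $\floor{\log_2 n} + \floor{\ln n}$. The two centerings differ by at most $2$ in absolute value, so writing $Y_n = X_n - \log_2 n - \ln n$ and $Z_n = X_n - (\floor{\log_2 n} + \floor{\ln n})$, we have $|Z_n| \le |Y_n| + 2$. By the elementary inequality $(a+b)^k \le 2^{k-1}(a^k + b^k)$ for $a,b \ge 0$, it follows that $|Z_n|^k \le 2^{k-1}(|Y_n|^k + 2^k)$, so uniform integrability of $|Y_n|^k$ transfers to $|Z_n|^k$. Combined with the convergence in distribution, this yields convergence of the $k$-th moments to $\mathbb{E}[(\text{dGum}(-x-y-c(x)))^k]$, noting that the discrete Gumbel has all moments finite since its tails decay doubly exponentially on one side and singly exponentially on the other.

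The main (essentially only) obstacle is making sure the tail bounds from~\cite{doerr_tight_2014} are invoked in precisely the right form: they are stated there in various guises, and one has to check that the bound is uniform in $n$ and has the correct (at least exponential) decay on both tails. If their statements are not directly in the required form, I would re-derive the needed tail bounds using the coupon-collector upper bound (straightforward concentration for the geometric phase, giving an exponentially decaying upper tail) together with an analogous lower-tail argument via the bi-exponential growth during the doubling phase. Once uniform integrability is in hand, the conclusion of Theorem~\ref{main_theorem_expectation} follows for every $k \in \mathbb{N}$ by the standard argument above.
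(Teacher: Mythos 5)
Your proposal is correct and follows essentially the same route as the paper: invoke Theorem~\ref{main_theorem_corollary} for distributional convergence, establish uniform integrability of the $k$-th powers from the exponential tail bounds in~\cite{doerr_tight_2014}, and conclude via the standard fact that convergence in distribution plus uniform integrability gives convergence of moments. The only cosmetic difference is that you first center at $\log_2 n + \ln n$ and then transfer to $\floor{\log_2 n}+\floor{\ln n}$, whereas the paper's Lemma~\ref{uniform_integrability} works with the floor centering directly; this is an inessential variation.
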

%For any random variable $X$ with support in $\mathbb{Z}$ and finite first moment we have that $\mathbb{E}[X] =\sum_{k\ge 1}(P[X\ge k]-P[-X\ge k])$. Thus, for $\alpha \in \mathbb{R}$
%\[
%	\mathbb{E}[\text{dGum}(\alpha)]=\sum_{k\ge 1}\left (1-e^{-e^{-k-\alpha+1}} \right )- \sum_{k\ge1}e^{-e^{k-\alpha}}.
%\]
%This converges also exponentially fast and allows for an effective numeric treatment. 
For $x,y \in [0,1)$ and a strictly increasing sequence of natural numbers $(n_i)_{i\in \mathbb{N}}$ such that $\{\log_2 n_i\} \to x $ and  $\{\ln n_i\}\to y$ Theorem \ref{main_theorem_expectation} immediately implies that, as $i\to\infty$,
\begin{equation*}
\label{eq:Eh}
	\mathbb{E}\big[X_{n_i}\big] = \log_2n_i + \ln n_i + h(x,y) + o(1),
\end{equation*}
where we abbreviated $h(x,y) = \mathbb{E}\big[\text{dGum}(-x-y-c(x))\big] -x-y$, cf.~Figure~\ref{figure_h} for a visualization of $h$. Similarly, to obtain an expression for the variance of the runtime, see that 
\begin{align*}
\textnormal{Var}[X_{n_i}]&=\textnormal{Var}\big[X_{n_i} - (\floor{\log_2 n_i}+\floor{\ln n_i})\big]\\
&=\mathbb{E}\Big[\big(X_{n_i} - (\floor{\log_2 n_i}+\floor{\ln n_i})\big)^2\Big]-\mathbb{E}\big[X_{n_i} - (\floor{\log_2 n_i}+\floor{\ln n_i})\big]^2
\end{align*}
and using Theorem \ref{main_theorem_expectation}, consequently  
\begin{align*}
\textnormal{Var}[X_{n_i}]=\mathbb{E}\big[\text{dGum}(-x-y-c(x))^2\big]-\mathbb{E}\big[\text{dGum}(-x-y-c(x))\big]^2 + o(1).
\end{align*}
To determine the expectation and variance of the runtime we need to consider various moments of the discrete Gumbel distribution. To this end, let $X$ be an integer valued random variable with finite $kth$ moment, then
\begin{align*}
\mathbb{E}\big[X^k\big]=\sum_{\ell\in\Z}\ell^kP[X=\ell]=\sum_{\ell\in\Z}\ell^k\big(P[X\le\ell]-P[X\le \ell-1]\big),
\end{align*}
and therefore, for all $\alpha\in\R$ and $k\in\mathbb{N}$,
\begin{align*}
\mathbb{E}\big[\text{dGum}(\alpha)^k\big]=\sum_{\ell\in\Z}\ell^k\left (e^{-e^{-\ell-\alpha}} - e^{-e^{-\ell-\alpha+1}}\right ).
\end{align*}
This sum converges exponentially fast, both for $\ell\to \infty$ and $\ell\to-\infty$, and thus allows for effective numerical treatment. In summary, improving \eqref{push_precise_bound_for_runtime_2014}, we get for all $n\in \mathbb{N}$ the numerical bounds
\begin{align*}
\log_2 n+\ln n+1.18242 \le \mathbb{E}[X_n]\le \log_2 n+\ln n+1.18263,
\end{align*}
as $\inf_{0 \le x,y\le 1} h(x,y)= 1.18242 \dots$, $\sup_{0 \le x,y\le 1} h(x,y)= 1.18262\dots$
and 
\begin{align*}
1.7277 \le \textnormal{Var}[X_n]\le 1.7289.
\end{align*}
These numerical bounds are (essentially) best possible, see also Figure \ref{figure_h}. Higher moments can be estimated similarly.

\begin{figure}[t]
    \centering
    \begin{minipage}[t]{0.47\textwidth}
    	    	\centering
\begin{tikzpicture}
\begin{axis}[3d box, view={-225}{45}, xlabel=$x$,ylabel=$y$,zlabel=${h(x,y)}-1.182$, z label style={at={(axis description cs:-0.26,0.5)},anchor=north}, zmax=0.00065]
     		\addplot3[surf,mesh/rows=30,shader=faceted interp, opacity=0.9] file {h.txt};
   		 \end{axis}
  \end{tikzpicture}
  \end{minipage}\hfill
  \begin{minipage}[t]{0.47\textwidth}
  	\centering
 	\begin{tikzpicture}
\begin{axis}[3d box, view={-225}{45}, xlabel=$x$,ylabel=$y$,zlabel=${\text{Var}[X_{n_i}]-1.72}$, z label style={at={(axis description cs:-0.26,0.5)},anchor=north}, zmax=0.009]
     		 \addplot3[surf,mesh/rows=30,shader=faceted interp, opacity=0.9] file {Var.txt};
   		 \end{axis}
  \end{tikzpicture}
 \end{minipage}
 \caption{\small Let $(n_i)_{i\in \mathbb{N}}$ be a sequence of natural numbers such that $\{\log_2 n_i\} \to x $ and  $\{\ln n_i\}\to y$ for $x,y\in[0,1)$. The left figure shows the function $h(x,y)$ (appearing in the expectation of $X_{n_i}$) for values of $x$ and $y$ between 0 and 1. The right figure  shows $\text{Var}[X_{n_i}]$ as a function of $x,y$.}
   \label{figure_h}
\end{figure}
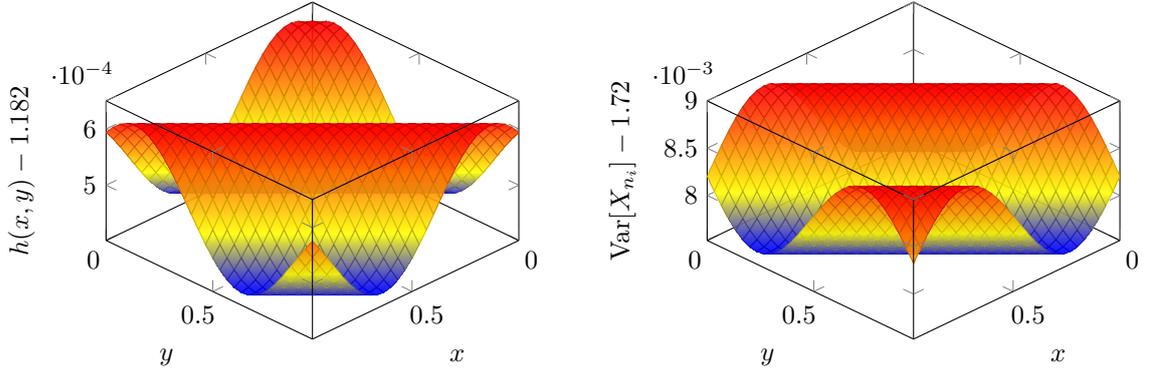
Let us close this section with a final remark on the function $c$ defined in \eqref{eq:c}, as this might be helpful in future works. This function is defined as the limit of a sequence in two parameters $a,b$; the main reason for this is its combinatorial origin, which will become apparent in the proofs. However, all that is actually important is that $b$ is large enough, in the sense that the difference $b-a\to \infty$ as $a\to\infty$. So, if we write $h$ for an integer function that diverges to infinity, then we could define
\[
d(x)= -x + \lim\limits_{a \ra \infty, a \in \N} h(a) + \ln\big(g^{(a + h(a))}(1-2^{-a-x})\big).
\]
Then $c(x) = d(x)$  (which we state without proof, as we do not need it here), and $c$ can be represented as a limit of an (one-dimesional) sequence.
\paragraph{Outline} 
In the next section we give an outline of the proof of our main results. At the beginning of the rumour spreading process \push is dominated by an exponential growth of the informed nodes (Lemma \ref{lemma1}). For the main part, where most nodes get informed, it closely follows a deterministic recursion (Lemma \ref{lemma2}) and at the end it is described by a coupon collector type problem (Lemma \ref{lemma3}). Based on these lemmas we give the rigorous proof of our claims in Section \ref{proof_main_sec}. The proof to these three important lemmas can also be found there, in Subsections \ref{proof_lemma2}-\ref{proof_lemma3}. Subsections \ref{proofs_of_the_preparation_results_push_exact_a} and \ref{proofs_of_the_preparation_results_push_exact} contain all other proofs.

\paragraph{Further Notation}
Unless stated otherwise, all asymptotic behaviour in this paper is for $n\ra \infty$. Consider a graph $G=(V,E)$. For $t \in \N_0 ~(=\N\cup \{0\})$ we denote by $I_t\subseteq V$ the set of informed nodes at the end of round $t$; in particular $|I_0|=1$. Analogously we write $U_t=V\backslash I_t$ for the set of uninformed nodes.  
For an event $A$, we sometimes write $P_A[\cdot] $ instead of $P[\cdot \mid A]$ to denote the conditional probability and we write $\mathbb{E}_A[\cdot]=\mathbb{E}[\cdot \mid A]$. If we condition on $I_t$, then we also abbreviate $P[\cdot \mid I_t]=P_t[\cdot]$ and $\mathbb{E}[\cdot \mid I_t]=\mathbb{E}_t[\cdot]$. 

%For a random variable $X$ that takes values in $\N=\{1,2,\dots\}$, we say that $X$ follows a geometric distribution with parameter $p\in (0,1)$ and write $X \sim \textnormal{Geom}(p)$ if for any $k \in \N$ it is $P[X=k]=(1-p)^{k-1}p.$

%Again ; for a node $v \in V$ we write $N(v)$ or $N_G(v)$ to denote its neighbourhood and $d(v)=d_G(v):=|N_G(v)|$ to denote its degree. For subsets $U,W\subseteq V$ with $U \cap W=\emptyset$ we write $E(U,W)=E_G(U,W)\subseteq E$ to denote the set of edges that have one endpoint in $U$ and one endpoint in $W$; we set $e(U,W)=e_G(U,W):=|E_G(U,W)|$. Moreover, we write $N(U)=N_G(U):=\bigcup_{u\in U}N_G(u)$. 
%For all $n \in \N$, let $Y_n$ and $Z_n$ denote real valued random variables; we write $$Y_n \overset{\delta} = Z_n, \quad Y_n \overset{\delta}\geq Z_n \quad \text{ or } \quad Y_n \overset{\delta}\leq Z_n$$ respectively if there is a function $h:\N\ra \R^+$ with $h =o(1)$ such that for all $n \in \N$ and all $x \in \R$ $$|P[Y_n\geq x] - P[Z_n \geq x]| \leq h(n)$$ or $$P[Y_n\\geq x] - P[Z_n \geq x] \geq -h(n) \quad \text{ or } \quad P[Y_n\geq x] - P[Z_n \geq x] \leq h(n)$$ respectively. 

\section{Proof Overview}
\label{proof_overview}

Let us start the proof of Lemma~\ref{main_theorem_push_complete} about the distribution of the runtime of \push on $K_n$ with a simple observation, that is more or less explicit also in previous works. Note that as long as the \emph{total number} of pushes performed is $o(\sqrt{n})$, then whp no node will be informed twice -- this is a simple consequence of the famous birthday paradox. That is, whp as long as $|I_t| = o(\sqrt{n})$, every node in $I_t$ will inform a currently uninformed node and thus $|I_{t+1}| = 2|I_t|$. In particular, whp
\begin{align}\label{eq:doubling}
	|I_{t_0}| = 2^{t_0}, ~~ \text{where} ~~ t_0 := \floor*{0.49\cdot  \log_2n}.
\end{align}
Soon after round $t_0$ things get more complicated. We continue with a definition. Apart from the functions $g^{(i)}$ defined in the previous section, we will also need the following functions. Set
\begin{align*}
 f=f^{(1)}:[0,1] \ra [0,1],~~x \mapsto 1-e^{-x}(1-x)
~~\text{and}~~
f^{(i)}:[0,1] \ra [0,1],~~
f^{(i)}=f\circ f^{(i-1)}, i\ge 2.
\end{align*}
Some elementary properties of $f$ are: $f$ is strictly increasing and concave, and $f^{(b)}(x) \to 1$ as $b \to \infty$ for all $x \in (0,1]$. Moreover, $f^{(i)}(x)=1-g^{(i)}(1-x)$ for all $x\in[0,1]$ and $i\in\mathbb{N}$.  It is also not difficult to establish, see also~\cite{pittel_on_1987} and Lemma \ref{expboundapplied} below, that $f$ captures the behavior of the expected number of informed nodes after one round of the protocol. Moreover, $|I_{t+1}|$ is typically close to $f(|I_t|/n)n$. Here we will need a more explicit qualitative control of how $|I_t|$ behaves, since our aim is to specify the limiting distribution. We show the following statement, which implies that if we start in round $t_0$ (set $T = t_0$ in that lemma) then whp for \emph{all} succeeding rounds $t_0 + t$ the number of informed nodes is close to $f^{(t)}(|I_{t_0}/n|)n$.
%\begin{lemma}
%\label{lemma2}
%\label{6}
%Let $t_0= \floor*{0.49\cdot  \log_2n}$. With probability $1-O(n^{-0.01})$ for all $ t\in \N_0$
%$$\left ||I_{t+t_0}|-f^{(t)}\left (2^{t_0-1}/n\right )n\right |\le n^{0.8}.$$
%\end{lemma}
\begin{lemma}
\label{lemma2}
\label{6}
Let $0<c < 0.49$ and $T \ge c\log_2n$. Then
$$P_T\left [\bigcap_{t\in \N_0} \left \{\big ||I_{T+t}|-f^{(t)}\left (|I_{T}|/n\right )n\big |\le n^{1-{c}/4}\right \}\right ]=1-O(n^{-c^2/10}).$$
\end{lemma}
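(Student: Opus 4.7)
The plan is to combine a one-step concentration estimate with an inductive error-propagation argument that exploits the concavity of $f$ to tame the accumulated error over the $O(\log n)$ rounds of interest.

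First, I would establish a one-step concentration bound: conditional on $I_{T+t}$, with probability at least $1-n^{-K}$ for a constant $K$ chosen in terms of $c$,
\[
\big||I_{T+t+1}|-f(|I_{T+t}|/n)\,n\big|\,\le\, S_{t+1}:= C\sqrt{\min(|I_{T+t}|,|U_{T+t}|)\,\log n} + O(1).
\]
The key point is that, conditional on $I_{T+t}$, the survival indicators $\big(\mathbf{1}[v\in U_{T+t+1}]\big)_{v\in U_{T+t}}$ are negatively associated, since they arise from $|I_{T+t}|$ independent uniform picks over $n-1$ targets; each has mean $(1-1/(n-1))^{|I_{T+t}|}$, which differs from $e^{-|I_{T+t}|/n}$ by a factor $1+O(|I_{T+t}|/n^{2})$, contributing at most an additive $O(1)$ when summed over $U_{T+t}$. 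A Bernstein-type bound for negatively associated sums then yields the claim (the variance is $\lesssim \min(|I_{T+t}|,|U_{T+t}|)$ in both the exponential-growth and coupon-collector-like regimes). Union-bounding over the $O(\log n)$ relevant rounds produces a global good event of probability $1-O(n^{-c^2/10})$ after a suitable choice of~$K$.

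Second, on the intersection of these events I set $\tilde x_t:=f^{(t)}(|I_T|/n)$ and $e_t:=|I_{T+t}|-\tilde x_t\,n$, and write
\[
e_{t+1}\,=\,\eta_{t+1}\,+\,f'(\xi_t)\,e_t,\qquad |\eta_{t+1}|\le S_{t+1},
\]
for some $\xi_t$ between $\tilde x_t$ and $|I_{T+t}|/n$. Iterating from $e_0=0$ gives $|e_t|\le \sum_{s=1}^{t}|\eta_s|\prod_{r=s}^{t-1}f'(\xi_r)$. The crucial observation is that $f$ is concave on $[0,1]$ with $f(0)=0$, so $f'(x)\le f(x)/x$ for every $x\in(0,1]$; after a secondary induction that keeps $|e_t|/n$ small enough to replace $f'(\xi_r)$ by $f'(\tilde x_r)$ up to a $1+o(1)$ factor, the product telescopes:
\[
\prod_{r=s}^{t-1}f'(\tilde x_r)\,\le\,\prod_{r=s}^{t-1}\frac{f(\tilde x_r)}{\tilde x_r}\,=\,\frac{\tilde x_t}{\tilde x_s}.
\]

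Third, I would estimate the sum. In the doubling phase $\tilde x_s\gtrsim 2^{s}|I_T|/n$ and $S_s\lesssim \sqrt{\tilde x_s\, n\log n}$, so the geometric series yields
\[
|e_t|\,\lesssim\,\tilde x_t\sqrt{n\log n}\,\sum_s\frac{1}{\sqrt{\tilde x_s}}\,\lesssim\,\tilde x_t\cdot \frac{n\sqrt{\log n}}{\sqrt{|I_T|}}\,\le\,n^{1-c/2}\sqrt{\log n}\,=\,o(n^{1-c/4}),
\]
using the typical lower bound $|I_T|\ge n^c$ supplied by the exponential-growth phase (Lemma~\ref{lemma1}). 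In the contracting phase where $\tilde x_t$ is close to $1$, $f'(\tilde x_s)<1$ and $S_s$ shrinks with $|U_{T+s}|$, so earlier errors damp geometrically and new contributions are even smaller, leaving the same overall bound. The main technical obstacle is precisely this error-propagation step: a naive Lipschitz bound using only $f'\le 2$ would compound to a factor $2^{t}=n^{\Omega(1)}$ across the horizon, which is ruinous; both the telescoping $\prod f'(\tilde x_r)\le \tilde x_t/\tilde x_s$ and the tight one-step standard deviation $\sqrt{|I_t|}$ (rather than $\sqrt{n}$) in the doubling phase are essential for the accumulated error to remain comfortably below $n^{1-c/4}$.
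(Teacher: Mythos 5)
Your plan is correct and follows the same structural blueprint as the paper's proof: a one-step concentration estimate, followed by an inductive error-propagation argument that exploits the behaviour of $f'$ along the orbit $\tilde x_t=f^{(t)}(|I_T|/n)$ so that the accumulated error stays far below $n^{1-c/4}$ over $\Theta(\log n)$ rounds. The technical execution differs in two respects. For concentration, the paper bounds $\textnormal{Var}\bigl[\,|I_{t+1}|\mid I_t\bigr]\le\mathbb{E}\bigl[\,|I_{t+1}|\mid I_t\bigr]$ via the self-bounding property (Lemma~\ref{lemma_var}) and applies Chebyshev, getting a polynomial tail that suffices after a union bound over $\log^2 n$ rounds; your negative-association plus Bernstein route yields a sub-Gaussian tail and is an equally valid, perhaps more robust, alternative. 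For the error propagation, both proofs apply the mean-value theorem to $f$ and both need the one-step error to be of order $\sqrt{|I_t|}$ rather than $\sqrt{n}$; the difference lies in the inequality controlling $f'$. The paper establishes $f'(x)\le\sqrt{2f(x)/x}$ through the auxiliary function $H$ and absorbs the resulting loss into the explicit ansatz $d_t\propto\alpha_t^{1/2+\varepsilon}\sqrt{2+\varepsilon}^{\,t-T}$, finishing by locating a cutoff round $t'$ with $f'(\alpha_{t'})<1-\varepsilon$ after which the error cannot grow. You instead invoke $f'(x)\le f(x)/x$ (valid because $f$ is concave on $[0,1]$ with $f(0)=0$), which is strictly sharper (as $f(x)/x\le 2$) and yields the clean telescoping $\prod_{r=s}^{t-1}f'(\tilde x_r)\le\tilde x_t/\tilde x_s$; this makes the bookkeeping more transparent, at the cost of the bootstrapping step that replaces $f'(\xi_r)$ by $f'(\tilde x_r)$, which you correctly flag. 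Two small points to make precise in a full write-up: (i) in the regime $\min(|I_t|,|U_t|)=O(\log n)$, the bound $S_{t+1}=C\sqrt{\min(|I_t|,|U_t|)\log n}+O(1)$ does not by itself give a polynomial tail from Bernstein, so you should include an additive $O(\log n)$ term (harmless, since $|I_T|\ge n^{c}$ forces this regime into the very last rounds where the telescoping factor is $O(1)$); (ii) the additive $O(1)$ model-bias term (from $\bigl(1-\tfrac1{n-1}\bigr)^{|I_t|}$ vs.\ $e^{-|I_t|/n}$, cf.\ Lemma~\ref{expboundapplied}) accumulates to at most $O(n^{1-c})$ after telescoping, which is also dominated.
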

Thus, the key to understanding $|I_t|$ is to understand how $f$ behaves when iterated very many times. Note that when the number of informed nodes is $xn$ for some very small $x$, then the $e^{-x}$ term in the definition of $f$ can be approximated by $1-x$ and therefore $f(x)\approx 1-(1-x)^2\approx 2x.$ This crude estimate suggests that the number of informed nodes doubles every round as long as there are only few informed nodes, and we know already that the doubling is perfect if $xn = o(\sqrt{n})$. Our next lemma actually shows that the doubling continues to be \emph{almost} perfect, as long as the total number of nodes is not close to $n$.
\begin{lemma}
\label{lemma1}
\label{7}
Let $a,T\in \N$ be such that $2^{-a}<0.1$ and $T\le\floor*{ 0.49\cdot  \log_2n}$. Set $t_1:= \floor*{\log_2n}-a$. Then 
\begin{equation*}
\left |2^{t_1}-f^{(t_1-T)}\left (2^{T}/n\right )n\right |\le 2^{-2a+1}n.
\end{equation*}
\end{lemma}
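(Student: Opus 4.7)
The plan is to track the error between the actual iteration $x_t := f^{(t-T)}(2^T/n)$ and the idealized doubling $y_t := 2^t/n$, using the fact that $f$ is very close to the map $x \mapsto 2x$ near zero. Let $\phi(x) := 2x - f(x) = 2x - 1 + e^{-x}(1-x)$ denote the ``per-round loss''. A direct computation gives $\phi(0) = \phi'(0) = 0$ and $\phi''(x) = e^{-x}(3-x) \le 3$ on $[0,1]$. Integrating twice yields both $\phi \ge 0$ (equivalently $f(x) \le 2x$, which can alternatively be seen from $1 - 2x \le (1-x)^2 \le (1-x)e^{-x}$ for $x\in[0,1]$) and the clean quadratic bound $\phi(x) \le \tfrac{3}{2} x^2$ for every $x \in [0,1]$.

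Next, set $\delta_t := y_t - x_t$, so that $\delta_T = 0$. From $y_{t+1} = 2 y_t$ and $f(x_t) = 2x_t - \phi(x_t)$ one reads off the recursion
\[
\delta_{t+1} = 2 \delta_t + \phi(x_t).
\]
Because $f(x) \le 2x$, a trivial induction shows $x_t \le y_t = 2^t/n$ for all $T \le t \le t_1$, and $y_{t_1} \le 2^{-a} < 0.1$ ensures that every relevant $x_t$ lies in the range where the quadratic bound on $\phi$ applies. Substituting $x_t \le 2^t/n$,
\[
\delta_{t+1} \le 2\delta_t + \frac{3}{2}\cdot\frac{4^t}{n^2}.
\]

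Dividing both sides by $2^{t+1}$ turns this into the telescoping inequality $\delta_{t+1}/2^{t+1} \le \delta_t/2^t + \tfrac{3}{4}\cdot 2^t/n^2$. Summing from $t = T$ to $t_1 - 1$, and using $\delta_T = 0$ together with $\sum_{t=T}^{t_1-1} 2^t < 2^{t_1}$, gives $\delta_{t_1}/2^{t_1} \le \tfrac{3}{4}\cdot 2^{t_1}/n^2$, hence
\[
\delta_{t_1} \le \frac{3}{4}\cdot\frac{4^{t_1}}{n^2} \le \frac{3}{4}\cdot 2^{-2a},
\]
since $2^{t_1} \le n\cdot 2^{-a}$. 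Multiplying by $n$ converts this back to $|2^{t_1} - f^{(t_1-T)}(2^T/n)\cdot n| = n\delta_{t_1} \le \tfrac{3}{4}\cdot 2^{-2a}\,n$, which is comfortably below the claimed $2^{-2a+1} n$.

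I do not anticipate any serious obstacle here: the argument is essentially a discrete Gronwall-style telescoping. The only point that needs a little care is to verify that the iterates actually stay inside the interval where the quadratic bound on $\phi$ is valid, which is automatic from the monotone estimate $x_t \le 2^t/n$ combined with the hypothesis $2^{-a} < 0.1$. Everything else is bookkeeping with constants, and the slack between $\tfrac{3}{4}\cdot 2^{-2a}$ and the stated $2^{-2a+1}$ means no tight estimate of $\phi$ is needed.
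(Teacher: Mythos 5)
Your proposal is correct, and it takes a genuinely different route to the same quadratic-deficit insight. The paper first establishes the two-sided sandwich $2x(1-x)\le f(x)\le 2x$, iterates the lower bound to obtain a \emph{multiplicative} error bound $z_i\ge 2^i z_0\prod_{j<i}(1-2^j z_0)$, and then estimates the product by switching to exponentials ($1-w\ge e^{-w-w^2/(2(1-w))}$) and re-linearizing at the end. You instead isolate the per-round loss $\phi(x)=2x-f(x)$, bound it by $\tfrac{3}{2}x^2$ via Taylor's theorem with $\phi(0)=\phi'(0)=0$ and $\phi''\le 3$, and then track the \emph{additive} error $\delta_t$ through the affine recursion $\delta_{t+1}=2\delta_t+\phi(x_t)$, which telescopes cleanly after dividing by $2^{t+1}$. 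Your Gronwall-style bookkeeping is arguably more transparent --- it avoids the product-to-exponential-to-linear round trip --- and it even yields the slightly better constant $\tfrac34\cdot 2^{-2a}$ in place of the paper's $2^{-2a+1}$. The one point worth being explicit about, which you do address, is the induction $x_t\le 2^t/n$ needed so that every iterate stays in the regime where the quadratic bound on $\phi$ applies; this is the same monotonicity fact $f(x)\le 2x$ the paper also relies on.
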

Combining the previous lemmas we have thus established that for any $a\in \N$ with $2^{-a}<0.1$ whp
\begin{equation}
\label{eq:ft1t0}
	(1 - 2^{-a+2})\cdot 2^{t_1}\le |I_{t_1}| \le 2^{t_1},
	\quad t_1 := \floor*{\log_2n}-a.
\end{equation}
Here we can think of $a$ being very large (but fixed) and then the two bounds are very close to each other; in particular, $|I_{t_1}| \approx 2^{\floor*{\log_2n}-a}$ and thus $I_t$ contains a linear number of nodes. Up to that point we have studied the behaviour of the process up to time $t_1$. Next we perform another~$b$ steps, where again $b$ is fixed. Applying Lemma \ref{lemma2} once more  and using that $f^{(b)}(x)$ is increasing and is less than $ 1$ for $x<1$ yields with room to spare that whp
\begin{align}\label{eq:start_main}
\left (1-n^{-\nicefrac{1}{6}}\right )f^{(b)}\big((1-2^{-a+2})2^{t_1} /n\big)
\leq  n^{-1}\left |I_{t_2}\right | 
\leq \left (1+ n^{-\nicefrac{1}{6}}\right )f^{(b)}\big(2^{t_1}/n\big),
~~t_2 := t_1+b.
\end{align}
In essence, this says that if we write $x = \log_2 n - \floor*{\log_2n} = \{\log_2 n\}$, then (we begin getting informal and obtain that)
\[
	|I_{t_2}| \approx  f^{(b)}\big(2^{t_1}/n\big)n = f^{(b)}\big(2^{-a-x}\big)n,
	\quad \text{where}\quad
	t_2 = \floor*{\log_2n}-a+b.
\]
In particular, choosing a priori $b$ large enough makes the fraction $|I_{t_2}|/n$ arbitrarily close to 1, that is, almost all nodes except for a tiny fraction are informed. All in all, up to time $t_2$ we have very fine control of the number of informed nodes, and we also see how the quantity $\{\log_2 n\}$ slowly sneaks in.

After time $t_2$ the behavior changes once more. In this regime there is an interesting connection to the well-known Coupon Collector Problem (CCP), which was also exploited in~\cite{doerr_tight_2014}. In order to formulate the connection, note that the number of pushes that are needed to inform one uninformed node, having $N$ informed nodes, is (in distribution) equal to the number of coupons needed to draw the $(N+1)$st distinct coupon. The CCP is very well understood, and it is a classic result that, appropriately normalized, the number of coupons tends to a Gumbel distribution. However, translating the number of required pushes to the number of rounds -- the quantity we are interested in -- is not straightforward. In particular, the number of pushes in one round depends on the current number of informed nodes. On the other hand, after round $t_2$ there are $n - o(n)$ informed nodes, so that we may hope to approximate the remaining number of rounds with $n^{-1}$ times  the number of coupons in the CCP. The next lemma establishes the precise bridge between the two problems. There, for two sequences of random variables $(X_n)_{n\in\mathbb{N}}$ and $(Y_n)_{n\in\mathbb{N}}$ we write $X_n\precsim Y_n$ if there is a function $h:\N\ra \R^+$ with $h =o(1)$ such that $P[X_n\geq x]  \leq P[Y_n \geq x]+ h(n)$  for all $n \in \N,x \in \R$; $X_n\succsim Y_n$ is defined with ``$\ge$'' instead of ``$\le$''.
%To bound how many pushes equal one round when informing an uninformed node it is necessary to go back one step on our recursion. Looking at the function $g$ when $x$ is small, we get that $g(x)\approx x\cdot e$, thus the number of uninformed nodes shrinks around a factor of $e$ per round. This insight will yield our necessary bound when going one step backwards. As the solution to the Coupon Collector Problem is given by a sum of geometrically distributed random variables, which when translated into rounds can be described as a harmonic series and a Gumbel distribution we get our third important lemma. 
%A quick definition fist: For all $n \in \N$, let $Y_n$ and $Z_n$ denote real valued random variables; we write $Y_n \overset{\delta}\leq Z_n$  if there is a function $h:\N\ra \R^+$ with $h =o(1)$ such that for all $n \in \N$ and all $x \in \R$ it holds that $P[Y_n\geq x]  \leq P[Z_n \geq x]+ h(n)$. Analogously we define $Y_n \overset{\delta}\geq Z_n$ and $Y_n \overset{\delta} = Z_n$.
\begin{lemma}
\label{lemma3}
Let $G\sim \textnormal{Gum}(\gamma), b>2a\in \mathbb{N}$ and assume that $\ell \cdot n\le |I_{\floor{\log_2 n}-a+b}|\le u\cdot n$ for some $\ell,u\in [0,1)$. Then %uniformly for all $a,b$ and $k\in \mathbb{N}$, as $n\to\infty$
\begin{align*}
X_n-\floor{\log_2 n}+a-b &\succsim  \left\lceil\ln n + \ln\left(\frac1u-1\right) + \gamma + \right\rceil
\end{align*}
and 
\begin{align*}
X_n-\floor{\log_2 n}+a-b &\precsim
%\ceil*{\ln n + \ln(1/\ell-1) - \ln(1/\ell) + \gamma + \ln\left(\frac{1-c}{\ell-c}\right)+ G}
% \left\lceil\ln n + \ln\left(\frac{\ell-1}{e\ell-e+1}\right) + \gamma + G+ o(1)\right\rceil
 \left\lceil\ln n +  \ln\left(\frac1\ell-1\right) + \ln\left(\frac{\ell}{e\ell-e+1}\right) + \gamma + G\right\rceil.
\end{align*}
\end{lemma}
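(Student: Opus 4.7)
The plan is to reduce the analysis of the remaining rounds after $t_2 := \floor{\log_2 n} - a + b$ to a classical Coupon Collector Problem (CCP) via a careful coupling, and then apply the standard Gumbel limit for the CCP to extract both bounds. The two one-sided statements correspond to the two extreme allowed values for $|I_{t_2}|$: monotonicity of \push in the initial set of informed nodes means more informed at $t_2$ can only shorten the process, so for the lower bound on $X_n - t_2$ I would replace $|I_{t_2}|$ by $un$ (the ``best case''), and for the upper bound by $\ell n$ (the ``worst case'').

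The central coupling views the entire stream of pushes performed in rounds $t_2+1, t_2+2, \ldots$ as a single i.i.d.\ sequence of uniform draws from $V \setminus\{\text{pusher}\}$, batched so that round $t_2+j+1$ consumes the next $|I_{t_2+j}|$ draws. Writing $C_m$ for the number of draws needed to ``hit'' every node starting from $m$ already informed, $C_m$ is, up to an $o(1)$ coupling error from self-exclusion, precisely a CCP stopping time from $m$ collected coupons, and I would invoke the classical result
\[
C_m/(n-1) - \ln(n-m) \xrightarrow{d} \mathrm{Gum}(\gamma),
\]
uniformly as long as $n-m\to\infty$.

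For the \emph{lower bound}, let $K = X_n - t_2$. Since before termination at least one node is uninformed, each round performs at most $n-1$ pushes, so $K(n-1) \ge C_{|I_{t_2}|} \ge C_{un}$. Taking the CCP limit and carefully tracking the $n$ vs.\ $n-1$ scaling (which, after rewriting $\ln((1-u)n)=\ln n+\ln(1-u)$ and absorbing a $-\ln u$ term that arises from replacing the $(n-1)$-normalization by the natural per-round count $n$) yields the claimed $K \succsim \lceil \ln n + \ln(1/u-1) + \gamma + G \rceil$. For the \emph{upper bound} I would split the remaining rounds into Phase A — a finite number of rounds during which $|I_t|/n$ climbs from $\ell$ towards $1$ — and Phase B, the CCP-dominated tail. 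In Phase A, Lemma \ref{lemma2} guarantees that $|I_{t_2+j}|/n = f^{(j)}(\ell) + O(n^{-c/4})$, so the uninformed fraction follows the iteration $U_{t_2+j}/n \approx g^{(j)}(1-\ell)$ of $g(x)=xe^{x-1}$. Writing total pushes as $Kn - \sum_{j<K} U_{t_2+j}$, the ``at least $C_{\ell n}$ pushes needed'' inequality gives
\[
K \;\le\; C_{\ell n}/n \;+\; \sum_{j\ge 0} g^{(j)}(1-\ell) \;+\; o(1).
\]
The hypothesis $b>2a$ forces $1-\ell < 1/e$, which brings the $g$-iteration immediately into its linear regime $g(x)\approx x/e$; dominating the orbit by a geometric series with ratio $1/e$ and accounting for the subleading correction produces the factor $1/(1-e(1-\ell))$, which combined with the CCP main term $\ln(n(1-\ell))+\gamma+G$ collapses into the claimed $\ln n + \ln(1/\ell-1) + \ln(\ell/(e\ell-e+1)) + \gamma + G$.

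The main obstacle is the upper bound, specifically extracting the precise closed form $\ln(\ell/(e\ell-e+1))$ from the push-deficit sum $\sum_{j\ge 0} g^{(j)}(1-\ell)$. The factor $1/(1-e(1-\ell))$ is the sum of the geometric series $\sum_j (e(1-\ell))^j$, which arises from dominating the nonlinear iterate $g$ by the best linear bound valid on $[0,1/e)$; marrying this to the CCP main term requires keeping track of one extra ``book-keeping'' round — the very purpose of choosing $b$ strictly larger than $2a$ — so that the coupling error and the linearization error combine cleanly. Once this bookkeeping is done, the outer ceiling in the statement simply reflects the integrality of $K$, and the passage to the $\mathrm{Gum}(\gamma)$ random variable is a routine consequence of the classical CCP Gumbel limit.
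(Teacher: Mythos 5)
Your overall plan -- couple the post-$t_2$ process with a coupon collector process and invoke the classical Gumbel limit -- is the same one the paper uses, but both one-sided bounds as you have set them up contain genuine gaps.

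\textbf{Lower bound.} The inequality $K(n-1)\ge C_{|I_{t_2}|}\ge C_{un}$ only yields $K\succsim\lceil C_{un}/(n-1)\rceil$, and $C_{un}/(n-1)-\ln((1-u)n)\to\mathrm{Gum}(\gamma)$. This gives $K\succsim \lceil\ln n+\ln(1-u)+\gamma+G\rceil$, which is \emph{strictly weaker} than the claimed $\lceil\ln n+\ln(1/u-1)+\gamma+G\rceil=\lceil\ln n+\ln(1-u)-\ln u+\gamma+G\rceil$, because $-\ln u>0$. Your remark that the $-\ln u$ term ``arises from replacing the $(n-1)$-normalization by $n$'' is incorrect; that replacement changes the sum only by an $O(\log n/n)$ amount. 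The missing $-\ln u$ actually comes from using a \emph{per-coupon} bound on the batch size: the round in which the $(i+1)$st distinct coupon is drawn has at most $i$ informed nodes at its start, so the number of rounds is at least $\sum_i T_i/i$, not merely $\sum_i T_i/(n-1)$. The weight $1/i$ rather than $1/(n-1)$ produces the extra $\sum_{i\ge un}1/i\approx -\ln u$.

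\textbf{Upper bound.} Your push-deficit identity $Kn-\sum_{j<K}U_{t_2+j}=\text{total pushes}$ is correct, but passing through it does \emph{not} produce the stated constant. Even granting the needed concentration from Lemma~\ref{lemma2}, the resulting correction term is $\sum_{j\ge 0}g^{(j)}(1-\ell)$, and this does not equal $-\ln(e\ell-e+1)=-\ln\bigl(1-e(1-\ell)\bigr)$: for $\ell$ close to $1$ the orbit sum is $\approx\frac{e}{e-1}(1-\ell)$, whereas $-\ln(1-e(1-\ell))\approx e(1-\ell)$, and a numerical check at $\ell=0.9$ gives $\approx 0.165$ vs.\ $\approx 0.317$. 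Your claim that ``a geometric series with ratio $1/e$ \dots\ produces the factor $1/(1-e(1-\ell))$'' is also internally inconsistent, since ratio $1/e$ sums to $\frac{e}{e-1}$ and $1/(1-e(1-\ell))$ is the sum of a series with ratio $e(1-\ell)$, two different quantities. (Moreover the stated correction is a logarithm, not a raw factor.) The paper obtains $-\ln(e\ell-e+1)$ by an entirely different mechanism: it lower-bounds each per-coupon batch size via Lemma~\ref{11}, $L_i\ge(1-n^{-1/3})\bigl(n-e(n-i)\bigr)$, which after the partial-fraction identity $\bigl((n-e(n-i))(1-i/n)\bigr)^{-1}=(n-i)^{-1}+(i-(1-1/e)n)^{-1}$ and harmonic-number asymptotics yields exactly the factor $\ln\frac{1/e}{\ell-1+1/e}=-\ln(e\ell-e+1)$. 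Note that since your orbit-sum constant happens to be \emph{smaller} than the paper's, your route could in principle yield a tighter upper bound and thus still imply the stated one, but that would require (i) fixing the direction of the inequality (``at least $C_{\ell n}$ pushes'' gives a lower bound on $K$, not an upper; you must use ``$K-1$ rounds are not enough''), (ii) controlling the accumulated $O(n^{-c/4})$ errors from Lemma~\ref{lemma2} over all $j<K$, and (iii) not asserting equality with a formula the sum does not satisfy.
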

Note that the previous discussion guarantees that $\ell,u$ in Lemma  \ref{lemma3} are very close to 1 and very close to each other. So, the term $\ln({\ell}/({e\ell-e+1}))$  is very close to 0. We obtain that in distribution
\[
	X_n-\floor{\log_2 n}+a-b
	\approx \left\lceil\ln n +  \ln\left(\frac1u-1\right) + \gamma + G\right\rceil,
	\quad \text{where} \quad  u = f^{(b)}\big(2^{-a-x}\big).
\]
and equivalently with $x = \log_2 n - \floor*{\log_2n}$
\begin{equation}
\label{eq:completeboundX}
	X_n
	\approx \left\lceil\log_2 n+\ln n  -a + b + \ln\left(g^{(b)}(2^{-a-x})\right) -x + \gamma + G\right\rceil.
\end{equation}
Here we now encounter the mysterious function $c$ from~\eqref{eq:c}. The next lemma collects some important properties of it that will turn out to be very helpful.
\begin{lemma}
\label{conv}
\label{c_is_continuous_push_complete}
\label{converging}
The function $$c(x) = \lim\limits_{a \ra \infty, a \in \N} \lim\limits_{b \ra \infty, b \in \N} -a + b + \ln\big(g^{(b)}(1-2^{-a-x})\big)-x$$ is well-defined, continuous and periodic with period 1.
\end{lemma}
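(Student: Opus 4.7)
The plan is to reduce both limits to a single infinite series via a simple algebraic identity, and then exploit a functional equation of the limiting quantity to handle the outer convergence. Since $\ln g(y) = \ln y + y - 1$, telescoping gives
\[
b + \ln g^{(b)}(y_0) = \ln y_0 + \sum_{k=0}^{b-1} g^{(k)}(y_0).
\]
The iterates $g^{(k)}(y_0)$ form a strictly decreasing sequence in $(0,1)$ whose only possible limit in $[0,1]$ is the fixed point $0$, so eventually $g^{(k)}(y_0) \le 1/2$, and from there $g^{(k+1)}(y_0)/g^{(k)}(y_0) = e^{g^{(k)}(y_0)-1} \le e^{-1/2}$, yielding geometric decay. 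Hence the series converges and
\[
L(y_0) := \lim_{b \to \infty}\bigl(b + \ln g^{(b)}(y_0)\bigr) = \ln y_0 + \sum_{k=0}^{\infty} g^{(k)}(y_0)
\]
is well defined for every $y_0 \in (0,1)$. From this representation one immediately reads off the functional equation $L(g(y)) = L(y) - 1$, which will drive the rest of the argument.

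For the outer limit, set $D_a(x) := -a - x + L(1 - 2^{-a-x})$, so that $c(x) = \lim_a D_a(x)$. A Taylor expansion gives $g(1-u) = 1 - 2u + O(u^2)$, hence
\[
g(1 - 2^{-a-1-x}) = 1 - 2^{-a-x} + \epsilon_a(x), \qquad |\epsilon_a(x)| = O(4^{-a})
\]
uniformly for $x \in [0,1]$. Combined with the functional equation, this yields
\[
D_{a+1}(x) - D_a(x) = L\bigl(1 - 2^{-a-x} + \epsilon_a(x)\bigr) - L\bigl(1 - 2^{-a-x}\bigr).
\]
The central technical step is to bound this difference. Rather than estimating $L'$ directly at the singular end $y=1$, I would iterate the functional equation: for $N \approx a + x$, $g^{(N)}$ carries $1 - 2^{-a-x}$ into a compact subset of $(0,1)$ on which $L$ is smooth and the dynamics is a contraction; the accumulated derivative $\prod_{k<N} g'(g^{(k)}(1-2^{-a-x}))$ is $O(2^a)$, and multiplied against $\epsilon_a = O(4^{-a})$ this gives $|D_{a+1}(x) - D_a(x)| = O(2^{-a})$, uniformly in $x \in [0,1]$. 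Summability makes $(D_a)$ a uniform Cauchy sequence, so the outer limit exists and $D_a \to c$ uniformly on $[0,1]$.

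Periodicity is essentially a reindexing: in the limit defining $c(x+1) + (x+1)$, the substitution $a \mapsto a - 1$ shifts $-a$ by $+1$ and turns $1 - 2^{-a-(x+1)}$ into $1 - 2^{-(a-1)-(x+1)} = 1 - 2^{-a-x}$, giving $c(x+1) + (x+1) = 1 + c(x) + x$, i.e.\ $c(x+1) = c(x)$. For continuity, each $D_a$ is continuous in $x$ since $g^{(k)}$ is a polynomial composition and the tail $\sum_{k \ge K} g^{(k)}(1-2^{-a-x})$ is dominated uniformly on $[0,1]$ by $\sum_{k \ge K} g^{(k)}(1-2^{-a-1})$, which vanishes as $K \to \infty$ by the geometric decay above. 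Uniform convergence $D_a \to c$ on $[0,1]$ then transfers continuity to $c$, and periodicity extends it to $\R$. The main obstacle will be the quantitative control of $L(y + \epsilon) - L(y)$ at $y$ just below $1$; the cleanest way to do this is to push $y$ into the contraction regime via the functional equation rather than bounding $L'$ naively, which keeps the estimates both sharp and uniform in $x$.
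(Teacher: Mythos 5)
Your argument is correct and, for the crucial outer limit in $a$, takes a genuinely different route from the paper's. Both you and the paper begin from the telescoping identity behind $\ln g(y)=\ln y+y-1$, giving the series representation $L(y)=\ln y+\sum_{k\ge 0}g^{(k)}(y)$ with a geometrically decaying tail, and both use uniform convergence of this series to get continuity of each $\gamma_a$ (your $D_a$). From there the paths diverge. The paper compares $\gamma_{a'}$ with $\gamma_a$ directly: it peels off the first $a'-a$ terms of the $\gamma_{a'}$-series via $f$-iterates to write $\gamma_{a'}(x)=\gamma_a(x')+O(2^{-a})$ for a shifted argument $|x-x'|\le 2^{-a}$, and then bounds $|\gamma_a(x)-\gamma_a(x')|$ by splitting the remaining sum into three pieces, each controlled explicitly via Corollaries~\ref{boundf} and~\ref{fAdditive}. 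You instead isolate the functional equation $L(g(y))=L(y)-1$, which is implicit in the paper's telescoping but never exploited there, and use it to reduce $D_{a+1}(x)-D_a(x)$ to a single increment $L(y+\epsilon_a)-L(y)$ at $y=1-2^{-a-x}$ with $\epsilon_a=O(4^{-a})$; pushing $y$ through $N\approx a$ applications of $g$ into a compact subset of $(0,1)$, where $L$ is Lipschitz, and bounding the accumulated chain-rule factor by $\prod_{k<N}g'(g^{(k)}(y))\le 2^N$ yields $|D_{a+1}-D_a|=O(2^{-a})$ uniformly in $x\in[0,1]$. Summability then gives the uniform Cauchy property in one stroke. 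This is cleaner and more conceptual than the paper's computation, and it makes the exponential rate of convergence to $c$ (which the paper only mentions informally in the discussion after Theorem~\ref{main_theorem_corollary}) completely transparent. Two points you leave implicit that a full write-up should supply: (i) $L$ is Lipschitz on any $[c_1,c_2]\subset(0,1)$, because $(g^{(k)})'(y)=\prod_{j<k}g'(g^{(j)}(y))$ is bounded by a constant times a geometric factor once $g^{(j)}(y)$ drops below the point where $g'=1$, so $L'$ is a convergent series on such an interval; and (ii) with $N\approx a$ the images $g^{(N)}(1-2^{-a-x})$ do land in a common compact subset of $(0,1)$ for all $x\in[0,1]$, which follows from the doubling estimate on $1-g^{(k)}(1-\delta)$ that you already invoke. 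The periodicity reindexing and the continuity transfer via uniform convergence are the same as in the paper.
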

With all these facts at hand, the proof of Lemma~\ref{main_theorem_push_complete} is completed by considering the random variable on the right-hand side of~\eqref{eq:completeboundX}; in particular, the dependence on $y = \ln n - \floor*{\ln n}$ arises naturally. The complete details of the proof, which is based on Lemmas~\ref{lemma2}--\ref{lemma3} and follows the strategy outlined here can be found in Section~\ref{proof_main_sec} (together with the proofs of the lemmas). 

As described in the introduction, apart from the limiting distribution we are interested in the asymptotic expectation of the runtime. A key ingredient towards the proof of Theorem~\ref{main_theorem_expectation} is uniform integrability, which can be shown by using the distributional bounds from~\cite{doerr_tight_2014}. Uniform integrability is a sufficient condition that convergence in distribution also implies convergence of the means.
\begin{lemma}[uniform integrability]\label{uniform_integrability}
Let $k\in\N$ and set $Y_n:=X_n-\floor{\log_2 n}-\floor{\ln n}$. Then  $Y_n^k$ is uniformly integrable, that is 
$$\lim_{N\to\infty}\sup_{n\in \mathbb{N}}\mathbb{E}\Big[|Y_n|^k~\Big|~\mathbbm{1}\big[|Y_n|^k>N\big]\Big]= 0.$$
\end{lemma}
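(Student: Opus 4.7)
The plan is to verify uniform integrability through the standard sufficient condition that a uniform bound on a strictly higher moment implies uniform integrability. Specifically, I would first observe that for any $n\in\mathbb{N}$ and $N>1$,
\[
\mathbb{E}\big[|Y_n|^k \mathbbm{1}[|Y_n|^k > N]\big] \le N^{-1/k}\, \mathbb{E}\big[|Y_n|^{k+1}\big],
\]
which follows since on the event $\{|Y_n|^k>N\}$ we have $|Y_n|>N^{1/k}$ and therefore $|Y_n|^k \le |Y_n|^{k+1}/N^{1/k}$. Hence the task reduces to establishing the uniform moment bound $\sup_{n\in\mathbb{N}} \mathbb{E}[|Y_n|^{k+1}] < \infty$.

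To achieve this I would exploit the stochastic sandwiching of $X_n$ between two coupon-collector-type random variables provided by \cite{doerr_tight_2014}. Those bounds yield two-sided tail estimates of the following form: there exist absolute constants $C, c > 0$ such that for all $n\in\mathbb{N}$ and every $t\ge 0$,
\[
P\big[X_n \ge \log_2 n + \ln n + t\big] \le C e^{-c t},\qquad
P\big[X_n \le \log_2 n + \ln n - t\big] \le C e^{-e^{c t}}.
\]
The upper tail is inherited from the (exponentially decaying) upper tail of the coupon collector problem, and the lower tail reflects the doubly exponential left tail of the Gumbel distribution. Since $|Y_n-(X_n-\log_2 n-\ln n)|\le 2$ uniformly in $n$, the same tail bounds (with slightly updated constants) hold for $|Y_n|$ in place of $|X_n-\log_2 n-\ln n|$.

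With the tail bounds in hand, a routine integration yields
\[
\mathbb{E}\big[|Y_n|^{k+1}\big] = (k+1)\int_0^\infty t^k P[|Y_n| > t]\, dt \le (k+1) C'\! \int_0^\infty t^k e^{-c t}\, dt =: M_k < \infty,
\]
where $C'$ absorbs the contribution of both tails and is independent of $n$. Combining this with the Markov-type inequality above gives
\[
\sup_{n\in\mathbb{N}}\mathbb{E}\big[|Y_n|^k \mathbbm{1}[|Y_n|^k > N]\big] \le M_k\, N^{-1/k} \xrightarrow{N\to\infty} 0,
\]
which is precisely the required uniform integrability.

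The only real obstacle is extracting the two displayed tail estimates in a form immediately usable here, but since \cite{doerr_tight_2014} already couples $X_n$ to classical coupon collector variables whose tail behavior is well understood, this is essentially a bookkeeping exercise; in fact much weaker quantitative tail decay than the above would already suffice for the argument, so no sharp constants need to be tracked.
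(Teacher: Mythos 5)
Your proof is correct and rests on the same key ingredient as the paper's: the two-sided tail bounds for $X_n$ from~\cite{doerr_tight_2014} (exponential upper tail, and a coupon-collector / doubly-exponential lower tail), which give $\sup_n P[|Y_n|>r] = O(e^{-cr})$. Where you differ is only in the bookkeeping once that tail bound is in hand: you pass through a uniform bound on the $(k+1)$st moment and invoke the standard ``bounded higher moment implies uniform integrability'' criterion via the elementary inequality $\mathbb{E}[|Y_n|^k\mathbbm{1}[|Y_n|^k>N]]\le N^{-1/k}\mathbb{E}[|Y_n|^{k+1}]$, whereas the paper truncates directly, bounding $\mathbb{E}[|Y_n|^k\mathbbm{1}[|Y_n|^k>N]]$ by the tail sum $\sum_{t\ge N^{1/k}}(t+5)^k\,4e^{-t}$. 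Both are routine consequences of the same tail decay; your route is marginally more modular (reusable for any family with a uniform $(k+1)$st moment bound), the paper's marginally more direct. As you note, only single-exponential decay is needed, so the precise form of the doubly-exponential lower-tail estimate you cite is overkill but harmless.
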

%\subsection{Preliminaries}
%We give some more lemmas that we will need in the next section
%
%\begin{lemma}
%\label{lemma_u_l}
%\label{u2_l2_close_enough}
%\label{considering_u2_suffices}
%Let $a \in \R^+$ and $2a<b\in \N$, for all $\eta \in [0,1]$ and $a$ large enough, setting $L=(1-2^{-a})2^{-a-\eta}$ and $U=2^{-a-\eta}$, it is
%$$f^{(b)}(U)-f^{(b)}(L)\leq 2^{-{3a}/{2} }, \quad 1-f^{(b)}(L)\leq 2^{-a} \quad \text{ and } \quad \frac{1-f^{(b)}(L)}{1-f^{(b)}(U)} \leq 1+ 2^{-a/2+4}.$$ 
%Additionally
%\begin{align*}
%\ln\left(\frac{1}{f^{(b)}(L)}-1\right) - \ln\left(\frac{1}{f^{(b)}(U)}-1\right) %& \leq 2^{-a/2 + 5}
%\overset{a \ra \infty}\ra 0.
%\end{align*}
%\end{lemma}

\section{Proof of the Main Result}\label{proof_main_sec}

In this section we complete the proof of Lemma \ref{main_theorem_push_complete} outlined in Section \ref{proof_overview}. Afterwards we give the (short) proofs for Theorems \ref{main_theorem_corollary} and \ref{main_theorem_expectation}.
\subsection{Proof of Lemma \ref{main_theorem_push_complete}}
As the outline was indeed rigorous until  \eqref{eq:start_main} we take the proof up from there. Choose the quantities $a,b\in\mathbb{N}$  such that $2a<b$ and recall that $t_1 =\floor{\log_2n}-a$. Set furthermore for brevity
\begin{align*}
\ell= \left (1-n^{-\nicefrac{1}{6}}\right )f^{(b)}\big((1-2^{-a+2})2^{t_1} /n\big)\quad\text{and}\quad 
u= \left (1+ n^{-\nicefrac{1}{6}}\right )f^{(b)}\big(2^{t_1}/n\big).
\end{align*}
Then \eqref{eq:start_main} states that, for $t_2=\floor{\log_2n}-a+b$,
\begin{align*}
\ell\leq n^{-1}\left |I_{t_2}\right | \leq u,
\end{align*}
and Lemma \ref{lemma3} yields, for $Y_n=X_n-\floor{\log_2n}+a-b$, that
\begin{align*}
Y_n&\precsim
\ceil*{\ln n + \ln\left(\frac1\ell-1\right) + \ln\left(\frac{\ell}{e\ell-e+1}\right)+ \gamma + G}
\end{align*}
and 
\begin{align*}
Y_n &\succsim  \ceil*{\ln n +\ln\left(\frac1u-1\right) +  \gamma + G}.
\end{align*}
The next lemma establishes that both $\ell,u$ tend to 1 as $a$ gets large, and moreover that the difference $\ln \left(1/\ell-1\right) - \ln \left(1/u-1\right)$ can be made arbitrarily small. Its proof can be found in Subsection \ref{proofs_of_the_preparation_results_push_exact}.
\begin{lemma}\label{u_l_to_1}
\label{lemma_u_l}
\label{u2_l2_close_enough}
\label{considering_u2_suffices}
For $\ell,u$ defined as above, where $b > 2a $
\begin{align*}
\lim_{a\to\infty}\sup_{n\in\N}|\ln\ell|=\lim_{a\to\infty}\sup_{n\in\N}|\ln u|=\lim_{a\to\infty}\sup_{n\in\N}\left |\ln\left(\frac{\ell}{e\ell-e+1}\right)\right |=0.
\end{align*}
Furthermore, 
\begin{align*}
\lim_{a\to\infty}\sup_{n\in\N}|\ln(1-\ell)-\ln(1-u)|=0.
\end{align*}
\end{lemma}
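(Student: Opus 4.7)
The plan is to reduce all four assertions to a single quantitative fact: with $b \ge 2a$,
\[
1 - f^{(b)}(\xi) \;\le\; C\rho^a
\qquad\text{uniformly in }\xi \in [2^{-a-2}, 2^{-a}],
\]
for some constants $C > 0$ and $\rho \in (0, 1)$. The argument is a two-phase analysis of the orbit of $f$. Since $f'(0) = 2$ and $f'$ is continuous, for any small $\epsilon > 0$ there is $z_\epsilon > 0$ with $f(z) \ge (2-\epsilon) z$ on $[0, z_\epsilon]$; iterating forces $f^{(k)}(\xi) \ge z_\epsilon$ after at most $k_0 = a \log 2 / \log(2-\epsilon) + O(1)$ steps, and choosing $\epsilon$ small makes $k_0 < b$ for large $a$. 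For the remaining iterations I switch to $y^{(k)} = 1 - f^{(k)}(\xi) = g^{(k)}(1 - \xi)$, which satisfies $y^{(k+1)} = y^{(k)} e^{y^{(k)} - 1}$. Since $y^{(k_0)} \le 1 - z_\epsilon < 1$, each subsequent factor $e^{y^{(k)} - 1}$ is at most $\rho := e^{-z_\epsilon} < 1$, giving $y^{(b)} \le \rho^{\,b - k_0} \le \rho^{\,a - O(1)}$ as claimed.

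Since $z_\ell, z_u \in [2^{-a-2}, 2^{-a}]$ for $a \ge 3$ (using $1 - 2^{-a+2} \ge 1/2$), the core estimate yields $f^{(b)}(z_\ell), f^{(b)}(z_u) \to 1$ as $a \to \infty$. Claims 1 and 2 follow on writing $\ln \ell = \ln(1 - n^{-1/6}) + \ln f^{(b)}(z_\ell)$ (similarly for $u$), using $|\ln(1 \pm n^{-1/6})| \le 2 n^{-1/6}$ for $n$ large, with finitely many small $n$ contributing harmlessly. Claim 3 is then immediate by continuity: $\ell \to 1$ implies $\ell / (e \ell - e + 1) \to 1 / (e - e + 1) = 1$, hence $|\ln(\ell / (e \ell - e + 1))| \to 0$.

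Claim 4 is the main obstacle and requires tracking how the gap $z_u - z_\ell = 2^{-a+2} z_u \le 4 \cdot 2^{-2a}$ propagates through $b > 2a$ iterations of $f$. The key tool is the explicit identity
\[
g^{(b)}(y) = y \exp\!\left( \sum_{k=0}^{b-1} g^{(k)}(y) - b \right),
\]
which follows inductively from $\ln g(y) = \ln y + y - 1$. Taking its ratio at $y_\ell := 1 - z_\ell$ and $y_u := 1 - z_u$ gives
\[
\ln \frac{g^{(b)}(y_\ell)}{g^{(b)}(y_u)} = \ln \frac{y_\ell}{y_u} + \sum_{k=0}^{b-1} \bigl( f^{(k)}(z_u) - f^{(k)}(z_\ell) \bigr).
\]
On the growth phase ($k \le k_0$), $f' \le 2$ yields $|f^{(k)}(z_u) - f^{(k)}(z_\ell)| \le 2^k \cdot 4 \cdot 2^{-2a}$; summing gives $O(2^{k_0 - 2a}) = O(2^{-a})$. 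On the decay phase ($k > k_0$), reverting to the $y$-variable where $|g'(y)| \le \rho$ gives a geometric contraction contributing another $O(2^{-a})$. Combined, $|\ln(g^{(b)}(y_\ell) / g^{(b)}(y_u))| = O(2^{-a}) \to 0$. Finally, the decomposition $1 - \ell = g^{(b)}(y_\ell) + n^{-1/6} f^{(b)}(z_\ell)$ and $1 - u = g^{(b)}(y_u) - n^{-1/6} f^{(b)}(z_u)$ absorbs the $n^{-1/6}$ perturbations in the logarithm, provided $n$ is large enough that $n^{-1/6} \ll g^{(b)}(y_u)$, which is the standing asymptotic regime. The success of the argument hinges on the growth phase lasting only $\approx a$ steps: the cumulative blow-up $2^{a}$ is exactly balanced by the tiny initial gap $2^{-2a}$, leaving a residual $O(2^{-a})$ that vanishes with $a$.
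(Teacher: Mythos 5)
Your overall strategy---passing to the log-sum representation $\ln g^{(b)}(y) = \ln y + \sum_{k<b}(g^{(k)}(y)-1)$ and splitting into a growth phase (controlled by $f'\le 2$) and a decay phase---is a genuine alternative to the paper's, which instead exploits the monotonicity $U \le f(L)$ (hence $f^{(b-2)}(U)\le f^{(b-1)}(L)$) to telescope the ratio $g^{(b)}(1-L)/g^{(b)}(1-U)$ directly, with no derivative estimate at all. The core estimate and claims 1--3 are fine: there the one-trajectory multiplicative decay $y^{(k+1)}/y^{(k)} = e^{y^{(k)}-1}\le e^{-z_\epsilon}=\rho$ is indeed all that is needed.

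For claim 4, however, the crucial decay-phase step is wrong. You assert $|g'(y)| \le \rho$ on the decay range, but $g'(y)=e^{y-1}(1+y)$, so at $y=1-z_\epsilon$ (exactly where the decay phase begins) one has $g'=e^{-z_\epsilon}(2-z_\epsilon)=\rho(2-z_\epsilon)>\rho$; worse, in the regime you advocate (``choosing $\epsilon$ small'', hence $z_\epsilon$ small) this is close to $2$, so the difference $g^{(k)}(y_\ell)-g^{(k)}(y_u)$ may keep roughly doubling for many steps after $k_0$. You have conflated the single-trajectory ratio $y^{(k+1)}/y^{(k)}\le\rho$ with the Lipschitz constant of $g$, which are not the same. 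The argument can be salvaged: either pick $z_\epsilon$ in the thin window (numerically around $0.44$ to $0.48$) where both $f(z)/z>\sqrt{2}$ on $[0,z_\epsilon]$ (ensuring $k_0<2a$) and $g'(1-z_\epsilon)<1$ hold simultaneously---which contradicts ``choosing $\epsilon$ small''---or, keeping $\epsilon$ small, insert an intermediate phase of $O_\epsilon(1)$ rounds during which $y^{(k)}\le(1-z_\epsilon)\rho^{k-k_0}$ drops below a fixed threshold where $g'\le 1/2$, so the difference grows by at most a constant factor $2^{O_\epsilon(1)}$ before contracting geometrically. Either fix is routine, but as written the decay-phase inequality is false and the proof does not close.
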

Thus, as $n\to \infty$,
$$\ln (1-u)=\ln\left (1-f^{(b)}\left ({2^{t_1}}/n\right )\right )+o(1)=\ln\left (g^{(b)}\left (1-2^{-a - \{\log_2n\}}\right )\right )+o(1).$$
Let $\varepsilon > 0$. Lemma \ref{u_l_to_1} readily implies that there are $a_0,n_0\in\N$ such that for all $a>a_0$ and $n>n_0$,  
\begin{align*}
&Y_n\succsim\ceil*{\ln n+\ln\left (g^{(b)}\left (1-2^{-a-\{\log_2 n\}}\right )\right )+\gamma + G -\varepsilon}
\end{align*}
and similarly also
\begin{align*}
&Y_n\precsim \ceil*{\ln n+\ln\left (g^{(b)}\left (1-2^{-a-\{\log_2 n\}}\right )\right )+\gamma + G + \varepsilon}.
\end{align*}
Lemma \ref{conv} guarantees that there is an $a_1 \ge a_0$  such that for all $a \ge a_1$
$$ \left| \ln\left (g^{(b)}\left (1-2^{-a-\{\log_2 n\}}\right )\right )-a+b - (c(\{\log_2n\})+\{\log_2n\})\right| \le \varepsilon.$$
Thus   for all $a>a_1$ and $n>n_0$
\begin{align*}
&X_n \succsim \ceil{\log_2n+\ln n + c(\{\log_2n\}) + \gamma + G - 2\varepsilon},
\end{align*}
as well as
\begin{align*}
&X_n \precsim \ceil{\log_2n+\ln n + c(\{\log_2n\}) + \gamma + G + 2\varepsilon}.
\end{align*}
Thus we are left with getting rid of the $\varepsilon$ terms in the previous equations. The following lemma accomplishes exactly that and therefore implies the claim of Lemma \ref{main_theorem_push_complete}. Its proof can be found in Subsection \ref{proofs_of_the_preparation_results_push_exact}.
\begin{lemma}\label{finishGumbel}
Let $h:\N \to\mathbb{R}^+$ and $G\sim\textnormal{Gum}(\gamma)$. Then 
\begin{align*}
\forall \varepsilon > 0: X_n\precsim \ceil{h(n)+G+\varepsilon} \implies
X_n\precsim \ceil{h(n)+G}.
\end{align*}
The respective statement also holds for ``$\succsim$''.
\end{lemma}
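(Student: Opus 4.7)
The plan is to pass from the hypothesis, which gives an $o(1)$ error bound for every fixed $\varepsilon>0$ separately, to a single $o(1)$ bound by showing that the CDFs of $\ceil{h(n)+G+\varepsilon}$ and $\ceil{h(n)+G}$ differ by at most $O(\varepsilon)$ uniformly in $n$ and in the argument. Since both random variables are integer-valued, it suffices to verify the $\precsim$-inequality at integers $k\in\Z$, because for non-integer $x$ both $P[X_n\ge x]$ and $P[\ceil{h(n)+G}\ge x]$ coincide with the respective probabilities at $\lceil x\rceil$ (and $X_n$ is itself integer-valued).

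For integer $k$ and any real $s$ one has $P[\ceil{h(n)+G+s}\ge k]=P[G>k-1-h(n)-s]=1-F(k-1-h(n)-s)$, where $F(t)=e^{-e^{-t-\gamma}}$ is the CDF of $G$. The density $F'(t)=e^{-t-\gamma}\exp(-e^{-t-\gamma})$ becomes $u e^{-u}$ under the substitution $u=e^{-t-\gamma}$, and hence attains the global maximum $1/e$. Consequently $F$ is $(1/e)$-Lipschitz, and for every $\varepsilon>0$
\begin{align*}
P[\ceil{h(n)+G+\varepsilon}\ge k]-P[\ceil{h(n)+G}\ge k]=F(k-1-h(n))-F(k-1-h(n)-\varepsilon)\le \varepsilon/e
\end{align*}
uniformly in $n\in\N$ and $k\in\Z$.

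By assumption, for every $\varepsilon>0$ there exists $e_\varepsilon:\N\to\R^+$ with $e_\varepsilon=o(1)$ such that $P[X_n\ge k]\le P[\ceil{h(n)+G+\varepsilon}\ge k]+e_\varepsilon(n)$ for all $n,k$. Combining with the previous display gives $P[X_n\ge k]-P[\ceil{h(n)+G}\ge k]\le \varepsilon/e+e_\varepsilon(n)$, uniformly in $k$. The function $\phi(n):=\inf_{\varepsilon>0}\bigl(\varepsilon/e+e_\varepsilon(n)\bigr)$ is then a $k$-independent $o(1)$ upper bound: given $\delta>0$, choose $\varepsilon:=e\delta/2$ and then $n$ large enough that $e_\varepsilon(n)<\delta/2$, yielding $\phi(n)<\delta$. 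This establishes $X_n\precsim \ceil{h(n)+G}$.

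The $\succsim$ case is entirely symmetric: replace $+\varepsilon$ by $-\varepsilon$ throughout and use the same Lipschitz bound in the reverse direction. The only non-trivial ingredient is the uniform boundedness of the Gumbel density, which makes the CDF gap $O(\varepsilon)$ uniformly; once this is in hand, the passage from ``for all $\varepsilon$'' to a single $o(1)$ error is a routine diagonalization over $\varepsilon=e\delta/2$.
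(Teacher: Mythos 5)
Your proof is correct, and it follows the same high-level strategy as the paper: show that the distributions of $\ceil{h(n)+G+\varepsilon}$ and $\ceil{h(n)+G}$ are $O(\varepsilon)$-close uniformly in $n$, then combine this with the hypothesis and let $\varepsilon\to 0$. Where the two proofs differ is in the quantitative lemma. The paper bounds the probability of the event $\{\ceil{h(n)+G\pm\varepsilon}\neq \ceil{h(n)+G}\}$, i.e.\ that $G$ lands within $\varepsilon$ of some element of $\Z - h(n)$, and appeals to absolute continuity of $G$; making this fully rigorous requires noting that the intervals are integer-spaced and that the Gumbel density is summable over unit blocks, so the union's probability is $O(\varepsilon)$ uniformly in $n$. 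You instead bound the CDF gap directly: $P[\ceil{h(n)+G+\varepsilon}\geq k]-P[\ceil{h(n)+G}\geq k]=F(k-1-h(n))-F(k-1-h(n)-\varepsilon)\leq \varepsilon/e$ by the $(1/e)$-Lipschitz property of the Gumbel CDF. This is a cleaner and sharper route for the purpose at hand, since the $\precsim$ relation is defined in terms of CDFs and your argument bounds exactly the relevant quantity without a coupling or union bound over infinitely many intervals. Your treatment of the reduction to integer arguments $k$ and the diagonalization $\phi(n)=\inf_{\varepsilon>0}(\varepsilon/e+e_\varepsilon(n))$ is also correct and made more explicit than in the paper.
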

\subsection{Proof of Theorems \ref{main_theorem_corollary} and \ref{main_theorem_expectation}}
\begin{proof}[Proof of Theorem \ref{main_theorem_corollary}]
Recall that $\{z\}=z-\floor{z}, z\in\R$. Let $(n_i)_{i\in \mathbb{N}}$ be a strictly increasing subsequence of $\mathbb{N}$ such that $\{\log_2 n_i\}\to x$ and $\{\ln n_i\}\to y$. Substituting $k=\floor{\log_2 n_i}+\floor{\ln n_i}+1+t$ for any $t\in\mathbb{Z}$  we get that
\begin{align*}
~&~P\Big[\big\lceil G + \log_2n_i +\ln n_i+\gamma+c(\{\log_2 n_i\})\big\rceil \geq k\Big]\\
= &~P\Big[\big\lceil G + \log_2n_i +\ln n_i+\gamma+c(\{\log_2 n_i\})\big\rceil \geq \floor{\log_2 n_i}+\floor{\ln n_i}+1+t\Big]\\
= &~P\Big[\big\lceil G + \{\log_2n_i\} +\{\ln n_i\}+\gamma+c(\{\log_2 n_i\})\big\rceil >t\Big] \\
= &~P\Big[G + \{\log_2n_i\} +\{\ln n_i\}+\gamma+c(\{\log_2 n_i\}) >t\Big].
\end{align*}
Thus using Lemma \ref{main_theorem_push_complete}, Lemma \ref{conv} and Lemma \ref{finishGumbel} we get that, as $i\to\infty$,
\begin{align*}
\sup_{t\in \Z}\Big|P\big[X_{n_i} \geq \floor{\log_2 n_i}+\floor{\ln n_i}+1+t\big]-P\big[ G + x+y+\gamma+c(x) > t\big]\Big| =o(1). 
\end{align*}
Using the distribution function of $G\sim \text{Gum}(\gamma)$ we get 
\begin{align*}
P\big[X_{n_i} \geq \floor{\log_2 n_i}+\floor{\ln n_i}+1+t\big]\overset{i\to \infty}{\longrightarrow}  1-\exp\big (-\exp\left (-t+x+y+c(x)\right )\big ),
\end{align*}
that is,
\begin{align*}
P\big[X_{n_i} \leq \floor{\log_2 n_i}+\floor{\ln n_i}+t\big]\overset{i\to \infty}{\longrightarrow}  P(\text{\normalfont dGum}(-x-y-c(x)) \le t).
\end{align*}
\end{proof}
Next we prove Theorem \ref{main_theorem_expectation}.
\begin{proof}[Proof of Theorem \ref{main_theorem_expectation}]
Lemma \ref{uniform_integrability} states that $\big(X_n-\floor{\log_2n}-\floor{\ln n}\big)^k$ is uniformly integrable and Theorem \ref{main_theorem_corollary} established its convergence in distribution to $\big(\text{\normalfont dGum}(-x-y-c(x))\big)^k$. Together  this implies
\begin{align*}
\mathbb{E}\Big[\big(X_n-\floor{\log_2n}-\floor{\ln n}\big)^k\Big]\to\mathbb{E}\Big[\big(\text{\normalfont dGum}(-x-y-c(x))\big)^k\Big].
\end{align*}
\end{proof}

\subsection{Proof of Lemma \ref{lemma2}}\label{proof_lemma2}
The number of informed nodes, $|I_t|$, fulfils a so-called self-bounding property, for reference see \cite{boucheron2013concentration}. One striking consequence thereof is the following bound.
\begin{lemma}[\cite{Daknama2019}]\label{lemma_var}
For any $t\in \mathbb{N}$,
\begin{align*}
\textnormal{Var}\big[|I_{t+1}|\mid I_t\big]\le\mathbb{E}\big[|I_{t+1}|\mid I_t\big].
\end{align*}
\end{lemma}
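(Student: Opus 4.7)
The plan is to invoke the self-bounding concentration framework (as developed in Boucheron, Lugosi, Massart; Chapter 3). Conditional on $I_t$, the nodes in $I_t$ make independent uniformly random choices of a neighbour, so let $N=|I_t|$ and denote these choices by $X_1,\ldots,X_N$. Then $|I_{t+1}|=|I_t|+Z$ where $Z=Z(X_1,\ldots,X_N)$ is the number of newly informed nodes, i.e.\ the number of distinct elements of $\{X_1,\ldots,X_N\}\setminus I_t$. Since $|I_t|$ is $I_t$-measurable, $\textnormal{Var}[|I_{t+1}|\mid I_t]=\textnormal{Var}[Z\mid I_t]$ and $\mathbb E[|I_{t+1}|\mid I_t]=|I_t|+\mathbb E[Z\mid I_t]\ge \mathbb E[Z\mid I_t]$, so it suffices to show $\textnormal{Var}[Z\mid I_t]\le\mathbb E[Z\mid I_t]$.

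To this end, for each $i\in\{1,\ldots,N\}$ define $Z_i$ to be the value of $Z$ obtained from $(X_1,\ldots,X_{i-1},X_{i+1},\ldots,X_N)$, i.e.\ the number of distinct elements of $\{X_j: j\neq i\}\setminus I_t$. Then $Z_i$ depends only on the coordinates other than $i$, and a case analysis shows $Z-Z_i\in\{0,1\}$: the difference is $1$ exactly when $X_i\notin I_t$ and $X_j\neq X_i$ for all $j\neq i$, and $0$ otherwise. Furthermore, for each newly informed node $v$, the equation $Z-Z_i=1$ can hold for at most one index $i$ (namely the unique informant of $v$, if it exists), and hence
\begin{equation*}
\sum_{i=1}^N (Z-Z_i) \;\le\; \#\{v\notin I_t : v \text{ has a unique informant}\} \;\le\; Z.
\end{equation*}
Together with $0\le Z-Z_i\le 1$, these are precisely the two conditions defining a self-bounding function.

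The self-bounding inequality (see e.g.\ Theorem 3.4 of Boucheron, Lugosi, Massart \cite{boucheron2013concentration}) then yields $\textnormal{Var}[Z\mid I_t]\le\mathbb E[Z\mid I_t]$, from which the conclusion follows by the reduction above. The only genuinely non-trivial point, and the one I would treat most carefully, is the verification of the second self-bounding condition $\sum_i(Z-Z_i)\le Z$; this is the place where the specific combinatorial structure of \push (each ``uniquely informed'' node being uniquely attributable to one pushing vertex) is actually used. Everything else is bookkeeping and a black-box appeal to the self-bounding machinery.
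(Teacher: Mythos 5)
Your proof is correct and follows exactly the route the paper indicates: it exhibits $Z=|I_{t+1}|-|I_t|$ as a self-bounding function of the independent push choices and then invokes the standard Efron--Stein-type consequence $\textnormal{Var}[Z]\le\mathbb E[Z]$ from Boucheron--Lugosi--Massart. The verification of the two self-bounding conditions (in particular $\sum_i(Z-Z_i)\le Z$ via unique informants) is the right combinatorial content, and the reduction from $|I_{t+1}|$ to $Z$ is immediate since $|I_t|$ is $I_t$-measurable.
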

This bound on the variance and Chebychev's inequality ensure that the number of informed nodes is highly concentrated around its expectation as soon as enough nodes are informed. Moreover,   even stronger concentration results are possible, as self-bounding functions admit exponential concentration inequalities, see e.g.~\cite{boucheron2013concentration}. Here, Chebychev is sufficient for our application.
\begin{lemma}
\label{concentration_chernoff}
\label{3}
Let $0<c\leq 1$, let $t_0\in \N$ and assume that $|I_{t_0}|\ge n^{c}$.
For $t \in \N$ and $\varepsilon>0$ let $C_t$ denote the event that $$\big||I_{t+1}|-\mathbb{E}_t[|I_{t+1}|]\big|\leq (\mathbb{E}_t[|I_{t+1}|])^{1/2+\varepsilon}.$$ 
%Then there is $\delta>0$ such that for any $t\geq t_0$
%$$P_t[C_t]= 1+ \mathcal{O}(\exp(-n^{\delta})).$$
%Moreover, also the following stronger result holds: There is a $\tilde \delta>0$ such that 
Then
\begin{align*}
%\label{concentration_chernoff_formula}
P_{t_0}\left[\bigcap\limits_{t \geq t_0}C_t\right]= 1-{O}\left (n^{-c\varepsilon}\right ).
\end{align*}
\end{lemma}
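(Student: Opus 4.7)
The plan is to apply Chebyshev's inequality round-by-round, using Lemma~\ref{lemma_var} to control the variance, and then to show that the per-round failure probabilities form a summable sequence once we have a deterministic lower bound on $|I_t|$ available on the ``good'' event $\bigcap_{s<t} C_s$. For any realization of $I_t$, Chebyshev combined with Lemma~\ref{lemma_var} yields
$$P_t[C_t^c] \leq \frac{\textnormal{Var}_t[|I_{t+1}|]}{(\mathbb{E}_t[|I_{t+1}|])^{1+2\varepsilon}} \leq (\mathbb{E}_t[|I_{t+1}|])^{-2\varepsilon},$$
so the problem reduces to lower-bounding $\mathbb{E}_t[|I_{t+1}|] = f(|I_t|/n)\,n$ across the relevant rounds. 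Since $|I_t|$ is non-decreasing in $t$ we have $|I_t| \geq n^c$ for all $t \geq t_0$; moreover $C_t$ is vacuously satisfied whenever $|I_t| = n$, so we may restrict attention to rounds with $|I_t| < n$.

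Next I would inductively show that on $\bigcap_{s=t_0}^{t-1} C_s$ the informed set satisfies $|I_t| \geq L_t$ for an explicit deterministic sequence $L_t$ that grows essentially geometrically. Using the elementary facts that $f$ is strictly increasing and concave with $f(x) \geq (1+\delta) x$ for $x \leq 1/2$ (some absolute $\delta>0$) and $1 - f(x) \leq (1-\delta')(1-x)$ for $x \geq 1/2$, the recursion $L_{t+1} := f(L_t/n)\,n - (f(L_t/n)\,n)^{1/2+\varepsilon}$ doubles up to lower-order corrections in the sub-critical regime until $L_t \geq n/2$, after which $n - L_t$ shrinks geometrically; in particular $L_t = \Omega(n)$ after $O(\log n)$ rounds. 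The induction step uses that on $C_{t-1}$ one has $|I_t| \geq \mathbb{E}_{t-1}[|I_t|] - (\mathbb{E}_{t-1}[|I_t|])^{1/2+\varepsilon}$, and that this lower bound is monotone in $|I_{t-1}|$ (for $|I_{t-1}|$ not too small), so $|I_{t-1}|$ can be replaced by its lower bound $L_{t-1}$.

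Assembling the union bound via the ``first failure'' decomposition gives
$$P_{t_0}\!\left[\bigcup_{t \geq t_0} C_t^c\right] = \sum_{t \geq t_0} P_{t_0}\!\left[C_t^c,\, \bigcap_{s=t_0}^{t-1} C_s\right] \leq \sum_{t \geq t_0} L_t^{-2\varepsilon},$$
where the sub-critical phase contributes $\sum_{k\geq 0}(2^k n^c)^{-2\varepsilon} = O(n^{-2c\varepsilon})$ and the saturation phase (of length $O(\log n)$) contributes $O(\log n \cdot n^{-2\varepsilon})$. For $c \leq 1$ and $\varepsilon$ fixed both contributions are $O(n^{-c\varepsilon})$, which is the claimed bound.

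I expect the main obstacle to be the circular structure between the input to Chebyshev (a lower bound on $\mathbb{E}_t[|I_{t+1}|]$) and its output (a lower bound on $|I_{t+1}|$); the ``first failure'' decomposition above is precisely what breaks this circularity and lets us condition on $|I_t| \geq L_t$ deterministically at each step. A minor technical wrinkle is the very end of the process, where the allowed deviation $(\mathbb{E}_t[|I_{t+1}|])^{1/2+\varepsilon}$ may exceed $n - |I_t|$ so that $C_t$ is automatic; these rounds can simply be dropped from the union bound.
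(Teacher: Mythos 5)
Your Chebyshev estimate and the first-failure decomposition are both fine, but there is a genuine gap in how you close the sum: the deterministic recursion $L_{t+1}=f(L_t/n)n-(f(L_t/n)n)^{1/2+\varepsilon}$ does \emph{not} push $L_t$ all the way to $n$. Writing $m_t=n-L_t$, the recursion becomes $m_{t+1}=m_t e^{-L_t/n}+(f(L_t/n)n)^{1/2+\varepsilon}$, and once $L_t/n\to 1$ the contraction factor tends to $e^{-1}$ while the additive term tends to $n^{1/2+\varepsilon}$. Hence $L_t$ stalls at a fixed point with $n-L_t\to \frac{e}{e-1}n^{1/2+\varepsilon}$, and since $\frac{e}{e-1}>1$ this gap strictly exceeds $L_t^{1/2+\varepsilon}\approx n^{1/2+\varepsilon}$. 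Consequently $C_t$ never becomes ``automatic'' on the strength of the deterministic bound alone, and your union bound $\sum_t L_t^{-2\varepsilon}$ has infinitely many terms, each of order $n^{-2\varepsilon}$ — it diverges. Your phrase ``the saturation phase (of length $O(\log n)$)'' assumes precisely what still needs to be proved, namely an a priori bound on the number of rounds, which $L_t$ alone does not provide.

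The paper's proof avoids this entirely and is in fact much shorter. It first invokes the Doerr--Künnemann tail bound $P[X_n\ge\lceil\log_2 n\rceil+\ln n+2.188+r]\le 2e^{-r}$ to argue that with probability $1-O(n^{-3c\varepsilon/2})$ the process has finished by round $\log^2 n$ (once $|I_t|=n$, every $C_t$ holds trivially), reducing the task to bounding $P_{t_0}\big[\bigcup_{t_0\le t\le\log^2 n}\overline{C_t}\big]$. For each of these $O(\log^2 n)$ rounds it uses only the monotone fact $\mathbb E_t[|I_{t+1}|]\ge|I_t|\ge|I_{t_0}|\ge n^c$, so Chebyshev plus Lemma~\ref{lemma_var} gives $P_t[\overline{C_t}]\le n^{-2c\varepsilon}$ uniformly, and the union bound yields $\log^2 n\cdot n^{-2c\varepsilon}=O(n^{-3c\varepsilon/2})$. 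Note that this makes your entire geometric-growth machinery unnecessary: the crude bound $|I_t|\ge n^c$ already suffices per round, so the only missing ingredient in your argument is a bound on the number of relevant rounds, and once you import that (from Doerr--Künnemann, or by re-deriving a coupon-collector tail once $|I_t|\ge n/2$), tracking $L_t$ buys nothing over the static bound.
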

\begin{proof}
%To keep the notation short we write $d:=E_t[|I_{t+1}|]^{1/2 + \varepsilon}$.
%We use Lemma \ref{2} which is applicable according to Lemma \ref{2.5} and obtain
From \cite[Corollary 3.2]{doerr_tight_2014} it is known that for any $r>0$ 
$$P\big[X_n \geq \ceil*{\log_2n} + \ln n + 2.188 + r\big]\leq 2e^{-r}.$$
 Thus it suffices (with lots of room to spare) to show 
\begin{equation}
\label{eq:tmplog2}
P_{t_0}\left[\bigcup\limits_{t_0\leq t \leq \log ^2n}\overline{C_t}\right] = {O}\left (n^{-3c\varepsilon/2}\right ).
\end{equation} 
By using Chebychev's inequality and Lemma \ref{lemma_var},
\begin{align*}
P_t\big[\overline{ C_t}\big]&=P_t\left [\big||I_{t+1}| - \mathbb{E}_t[|I_{t+1}|]\big|> \mathbb{E}_t[|I_{t+1}|]^{1/2 + \varepsilon}\right ] \leq \frac{\text{Var}[|I_{t+1}|]}{\mathbb{E}_t[|I_{t+1}|]^{1 +2 \varepsilon}}\le  \mathbb{E}_t[|I_{t+1}|]^{-2 \varepsilon}.
%\\
%&\le  |I_{t+1}|^{-2 \varepsilon}\le n^{-2c\varepsilon},
\end{align*}
Since $\mathbb{E}_t[|I_{t+1}|] \ge |I_{t+1}| \ge |I_{t_0}|$
the claim follows from~\eqref{eq:tmplog2} and the union bound.
\end{proof}
Lemma \ref{expboundapplied} establishes a connection between the expected value of $|I_{t+1}|$ and our previously defined function $f$, see below Equation~\ref{eq:doubling}. This has also been observed (though not so precise) in \cite{pittel_on_1987} and we include a quick proof for completeness.
\begin{lemma}
\label{expboundapplied}
Let $t\in\N$ and $n\ge 3$. Then
$$f(|I_t|/n)n \le \mathbb{E}_t|I_{t+1}|] \le   f(|I_t|/n)n + 5.$$
\end{lemma}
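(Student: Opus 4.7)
The approach is to write $\mathbb{E}_t[|I_{t+1}|]$ in closed form on $K_n$ and compare it directly to the closed form of $f(|I_t|/n)\,n$. With $i := |I_t|$, each informed node selects a neighbour uniformly among the $n-1$ others independently, so a given uninformed node remains uninformed after round $t+1$ with probability $(1-1/(n-1))^i$. Hence
\begin{equation*}
\mathbb{E}_t[|I_{t+1}|] \;=\; n - (n-i)\bigl(1-\tfrac{1}{n-1}\bigr)^i,
\end{equation*}
while by definition $f(i/n)\,n = n - (n-i)e^{-i/n}$. The lemma therefore reduces to sandwiching
\begin{equation*}
D \;:=\; (n-i)\Bigl(e^{-i/n} - \bigl(1 - \tfrac{1}{n-1}\bigr)^i\Bigr)
\end{equation*}
between $0$ and $5$.

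For the lower bound $D \geq 0$, the plan is to apply $\ln(1-x) \leq -x$ at $x = 1/(n-1)$ to get $(1-1/(n-1))^i \leq e^{-i/(n-1)} \leq e^{-i/n}$, which is immediate.

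For the upper bound I would use the two-term Taylor estimate $-\ln(1-x) \leq x + x^2$ for $x \in [0,1/2]$, which one verifies by checking that $h(x) := -\ln(1-x) - x - x^2$ is nonpositive on that interval (differentiating shows it has a single interior minimum and is nonpositive at $0$ and $1/2$). Since $1/(n-1) \leq 1/2$ for $n \geq 3$, applying this with $x = 1/(n-1)$ gives $(1-1/(n-1))^i \geq e^{-i/(n-1) - i/(n-1)^2}$. Combined with the standard inequality $e^{-a} - e^{-b} \leq (b-a)e^{-a}$ for $0 \leq a \leq b$, this yields
\begin{equation*}
D \;\leq\; (n-i)\,e^{-i/n}\Bigl(\tfrac{i}{n(n-1)} + \tfrac{i}{(n-1)^2}\Bigr),
\end{equation*}
after which bounding $(n-i)e^{-i/n} \leq n$ and $i \leq n$ gives $D \leq n/(n-1) + n^2/(n-1)^2$, a decreasing function of $n$ whose value at $n = 3$ is $15/4 < 5$.

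The main obstacle is only bookkeeping: conceptually, $(1-1/(n-1))^i$ and $e^{-i/n}$ differ by a $1 + O(1/n)$ multiplicative factor, so the $(n-i)$ prefactor keeps the absolute difference $O(1)$. What requires care is the choice of the log estimate -- using a cruder bound such as $-\ln(1-x) \leq x/(1-x)$ would push the constant above $5$ -- and one must verify the smallest admissible case $n = 3$ at the end (which is the tightest).
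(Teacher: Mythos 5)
Your proof is correct and follows essentially the same route as the paper's: write $\mathbb{E}_t[|I_{t+1}|]$ in closed form, use $1-x\le e^{-x}$ for the lower bound, and use $1-x\ge e^{-x-x^2}$ on $[0,1/2]$ for the upper bound, then control the residual factor and bound crudely by $n$. Your final step via $e^{-a}-e^{-b}\le(b-a)e^{-a}$ is a slightly cleaner packaging of the paper's factoring argument and in fact yields the marginally sharper constant $15/4$ at $n=3$ in place of the paper's $9/2$.
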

\begin{proof} 
Each uninformed node $u\in U_t$ remains uninformed if all $|I_t|$ informed nodes do not push to $u$. Since all these events are independent, we obtain that the probability that $u$ remains uninformed is $(1 - 1/(n-1))^{|I_t|}$. Thus by linearity of expectation
\begin{align*}
\mathbb{E}_t[|I_{t+1}|]&=|I_t| + \big(n-|I_t|\big)\left(1-\left(1-\frac{1}{n-1}\right)^{|I_t|}\right)=n- \big(n-|I_t|\big)\left(1-\frac{1}{n-1}\right)^{|I_t|}. 
\end{align*}
For a lower bound we use $1-x\le e^{-x}$  and get
\begin{align*}
\mathbb{E}_t[|I_{t+1}|]\ge n- \big(n-|I_t|\big)e^{-|I_t|/(n-1)}\ge n-\big (n-|I_t|\big)e^{-|I_t|/n}=f(|I_t|/n)n . 
\end{align*}
For an upper bound we use $1-x\ge e^{-x-x^2}$ for all $x\le1/2$
\begin{align*}
\mathbb{E}_t[|I_{t+1}|]&\le n- \big(n-|I_t|\big)e^{-|I_t|/(n-1)-|I_t|/(n-1)^2}\le n- \big(n-|I_t|\big)e^{-|I_t|/n}\exp\left (-\frac{2|I_t|}{(n-1)^2}\right ) 
\end{align*}
and again using $1-x\le e^{-x}$ 
\begin{align*}
\mathbb{E}_t[|I_{t+1}|]&\le n- \big(n-|I_t|\big)e^{-|I_t|/n}\left (1-\frac{2|I_t|}{(n-1)^2}\right )\le f(|I_t|/n)n+5. 
\end{align*}
\end{proof}
Lemma \ref{bound_with_derivative} is an auxiliary result that we use in the proof of Lemma~\ref{6}. It shows that $f$ is concave and has decreasing derivative on the interval $[0,1]$, the stated property is a direct consequence.
\begin{lemma}
\label{bound_with_derivative}
Let $0<x_1\leq x_2<1$. Then $|f(x_2)-f(x_1)|\leq (2-x_1)e^{-x_1}(x_2-x_1)$.
\end{lemma}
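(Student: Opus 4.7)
The plan is to treat this as a standard mean value theorem computation, using concavity of $f$ to bound the intermediate derivative by the derivative at the left endpoint.

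First I would compute the derivative of $f(x) = 1 - e^{-x}(1-x)$ directly: the product rule gives $f'(x) = e^{-x}(1-x) + e^{-x} = (2-x)e^{-x}$. In particular $f'(x) > 0$ on $[0,1]$, so $f$ is strictly increasing there and $|f(x_2) - f(x_1)| = f(x_2) - f(x_1)$. The mean value theorem then produces some $\xi \in (x_1, x_2)$ with
\[
f(x_2) - f(x_1) = f'(\xi)(x_2 - x_1) = (2-\xi)e^{-\xi}(x_2 - x_1).
\]

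Next I would verify that $f'$ is itself decreasing on $[0,1]$, which would let me replace $f'(\xi)$ by the larger quantity $f'(x_1) = (2 - x_1)e^{-x_1}$. Differentiating once more gives $f''(x) = -e^{-x} - (2-x)e^{-x} = -(3-x)e^{-x}$, which is strictly negative for $x \in [0,1]$. Hence $f$ is concave and $f'$ is monotonically decreasing on this interval, so $f'(\xi) \leq f'(x_1)$ and the desired bound follows immediately.

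There is no real obstacle here; the only thing to be careful about is the sign bookkeeping in the derivatives and the observation that concavity on $[0,1]$ is exactly what justifies replacing $f'(\xi)$ by $f'(x_1)$ rather than, say, the average of the endpoint derivatives. The statement is essentially an instance of the fact that for a concave differentiable function $f$, one has $f(x_2) - f(x_1) \leq f'(x_1)(x_2 - x_1)$.
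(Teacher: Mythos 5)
Your proof is correct and matches the paper's argument essentially verbatim: compute $f'(x)=(2-x)e^{-x}$ and $f''(x)=(x-3)e^{-x}<0$ on $[0,1]$, note $f'$ is positive and decreasing, and apply the mean value theorem to bound $f(x_2)-f(x_1)$ by $f'(x_1)(x_2-x_1)$. No differences worth noting.
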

\begin{proof}
It is $f'(x)=(2-x)e^{-x}$ and $f''(x)=(x-3)e^{-x}$; in particular, $f'$ is monotonically decreasing and takes only positive values on $[x_1,x_2]$. Furthermore $$\max\limits_{x \in [x_1,x_2]}f'(x)= (2-x_1)e^{-x_1}$$  and therefore, as a direct consequence of the mean value theorem, we have $$|f(x_2)-f(x_1)|\leq (x_2-x_1) \max_{x \in [x_1,x_2]}f'(x) = (2-x_1)e^{-x_1}(x_2-x_1).$$
\end{proof}
We state a simple corollary for later reference.
\begin{corollary}\label{fAdditive}
Let $i\in\N$ and $r,s\in [0,1/2]$. Then
$f^{(i)}(r+s)\le f^{(i)}(r)+2^is.$
\end{corollary}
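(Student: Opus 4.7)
The plan is to obtain a global Lipschitz constant $2$ for $f$ on $[0,1]$ and then iterate. First, from the computations already performed in the proof of Lemma~\ref{bound_with_derivative} we have $f'(x) = (2-x)e^{-x}$ with $f''(x) = (x-3)e^{-x} < 0$ on $[0,1]$, so $f'$ is decreasing there with maximum $f'(0) = 2$. The mean value theorem therefore yields the uniform bound $f(x_2) - f(x_1) \le 2(x_2 - x_1)$ for all $0 \le x_1 \le x_2 \le 1$; this slightly strengthens Lemma~\ref{bound_with_derivative} by also covering the endpoint $x_1 = 0$, at essentially no cost since the derivative estimate is uniform on the closed interval.

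Next, I would prove the corollary by induction on $i$. The base case $i = 1$ is immediate: since $r, s \in [0, 1/2]$ we have $0 \le r \le r+s \le 1$, so the Lipschitz estimate gives $f(r+s) \le f(r) + 2s = f(r) + 2^1 s$. For the inductive step, set $a := f^{(i-1)}(r)$ and $b := f^{(i-1)}(r+s)$. The properties of $f$ recalled just after its definition in Section~\ref{proof_overview}—$f$ is monotonically increasing and maps $[0,1]$ into $[0,1]$—guarantee that $0 \le a \le b \le 1$. Applying the Lipschitz estimate to the pair $(a, b)$ together with the inductive hypothesis $b - a \le 2^{i-1} s$ then gives
\[
f^{(i)}(r+s) - f^{(i)}(r) \;=\; f(b) - f(a) \;\le\; 2(b - a) \;\le\; 2 \cdot 2^{i-1} s \;=\; 2^i s,
\]
which is exactly the claimed bound.

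I do not anticipate any real obstacle here. The only mild subtlety is extending Lemma~\ref{bound_with_derivative} to include the endpoint $x_1 = 0$, but this is immediate from continuity of $f$ or, even more directly, by rerunning the mean value argument verbatim on $[0, x_2]$ using the same uniform bound $\max_{x \in [0, x_2]} f'(x) \le 2$.
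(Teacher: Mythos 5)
Your proof is correct and matches what the paper clearly has in mind: the corollary is stated immediately after Lemma~\ref{bound_with_derivative} with no separate proof, the intent being precisely that $(2-x_1)e^{-x_1}\le 2$ gives a uniform Lipschitz constant $2$ on $[0,1]$, which iterates to $2^i$. Your careful handling of the endpoint $x_1=0$ (not covered literally by the lemma's hypothesis $0<x_1$) and the explicit use of monotonicity to justify $0\le a\le b\le 1$ are both correct and fill in exactly the small details the paper leaves implicit.
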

Having these lemmas as ingredients we can prove the main result of this subsection. 
Lemma \ref{expboundapplied} shows that the expectation of $|I_{t+1}|$ is given by $f(|I_t|/n)n$ and Lemma \ref{concentration_chernoff} shows that $|I_{t+1}|$ is closely concentrated around its expectation in (nearly) all rounds. To then prove that $|I_{t+\tau}|$ is close to $f^{(\tau)}(|I_t|/)n$ for any $\tau\in \mathbb{N}$ we need to make sure that the errors in the concentration and the approximation of the expectation are not blown up by repeated applications of $f$. We will show that $f$ can indeed increase the error in each step by a factor that can be as large as $\sqrt{2}$, but luckily this only happens when $|I_{t+\tau}|=o(n)$ and thus the accumulated error will remain small (as $|I_t|$ nearly doubles in this regime). 
\begin{proof}[Proof of Lemma \ref{6}]
Let  $0<\varepsilon<c/10$, and assume, with foresight, that $n \ge n_0$, where $n_0$ satisfies the inequalities $$\sqrt{2}+10n_0^{-\varepsilon}<\sqrt{2+\varepsilon} \quad \text{and} \quad n_0^c \ge 25.$$ As $T\ge c\log_2 n$ and because of \eqref{eq:doubling}, that is, $|I_t|= 2^t$ for all $t\le \floor{0.49\log_2n}$, we have $|I_T|\ge n^c$. Consequently we can apply Lemma \ref{concentration_chernoff} and thus get with probability $1-O(n^{-c\varepsilon})$
\begin{align}\label{eq:concentration}
\big||I_{t+1}|-\mathbb{E}_{t}[|I_{t+1}|]\big|\le \mathbb{E}_{t}[|I_{t+1}|]^{1/2+\varepsilon},
\quad \text{ for all $t\ge T$}.
\end{align} 
For the rest of this proof we assume that \eqref{eq:concentration} holds. Set $$\alpha_{T+t}= f^{(t)}(|I_T|/n),  \quad t\in\mathbb{N}_0.$$ We will first argue that
\begin{equation}
\label{eq:auxassumption}
\big ||I_{t}|-\alpha_{t}n\big|\le \alpha_{t} ^{1/2+\varepsilon}n^{1/2+2\varepsilon}\sqrt{2+\varepsilon}^{t-T}=:d_{t}.
\end{equation}
for all $t \ge T$ such that $d_{t} \le n^{1-\varepsilon}$. Note that this is obviously true for $t = T$. For the induction step we argue that 
\begin{align}
\label{eq:claim_1}
\big||I_{t+1}|-\alpha_{t+1}n\big|\le  \alpha_{t+1} ^{1/2+\varepsilon}n^{1/2+2\varepsilon}\sqrt{2+\varepsilon}^{t+1-T} = d_{t+1}.
\end{align}
To see this, we  use Lemma~\ref{expboundapplied},~\eqref{eq:concentration} and the fact that $|I_{t+1}| \le 2|I_{t}|$ (in this order) to obtain the bound
\begin{align*}%\label{eq:basecase}
\big ||I_{t+1}|-f(|I_{t}|/n)n\big|\le \big ||I_{t+1}|-\mathbb{E}_{t}[|I_{t+1}|]\big| +5\le  (2|I_t|)^{1/2+\varepsilon}+5.
\end{align*}
Then we apply Lemma~\ref{bound_with_derivative} to estimate the difference of $f(|I_{t}|/n)$ and $\alpha_{t+1} = f(\alpha_{t})$, and infer  from~\eqref{eq:auxassumption}, using $e^x\le 1+2x$ for all $0\le x\le 1$, that
\begin{align*}
\big| f(|I_{t}|/n)n -\alpha_{t+1}n\big|
& \le \big ||I_{t}|-\alpha_{t}n\big|\big(2-\min\{\alpha_{t},|I_{t}|/n\}\big)e^{-\min\{\alpha_{t},|I_{t}|/n\}} \\
& \le d_{t}\big(2- \alpha_{t} + d_{t}/n\big)e^{-\min\{\alpha_{t},|I_{t}|/n\}} \\
& \le d_{t}\big(2- \alpha_{t}\big)e^{- \alpha_{t} + d_{t}/n}+ d_{t}^2/n\\
& \le
d_{t}(2-\alpha_{t})e^{-\alpha_{t}}+{5d_{t}^2}/{n}.
\end{align*}
All in all we have argued that for all $t$ such that $d_{t}\le n^{1-\varepsilon}$
\begin{equation*}
\begin{aligned}
%\label{eq:bound_for_I_t+1}
\big ||I_{t+1}|-\alpha_{t+1}n\big |&\leq\big ||I_{t+1}|-f(|I_{t}|/n)n\big |+\big |f(|I_{t}|/n)n-\alpha_{t+1}n\big |\\
& \le  (2|I_t|)^{1/2+\varepsilon}+5 ~+~ d_{t}(2-\alpha_{t})e^{-\alpha_{t}}+5d_{t}^2/n\\
&\le  2(\alpha_{t}n + d_{t})^{1/2 + \varepsilon} + 5 + d_{t}(2-\alpha_{t})e^{-\alpha_{t}}+5d_{t} n^{-\varepsilon}.
\end{aligned}
\end{equation*}
Our assumptions on $\varepsilon$ and $n$ imply that $d_{t}^{1/2 + \varepsilon} \le d_{t}n^{-\varepsilon}$. Moreover, $\alpha_Tn \ge  n^c \ge 25$, and thus
\begin{equation}
\begin{aligned}
\label{eq:bound_for_I_t+1}
\big ||I_{t+1}|-\alpha_{t+1}n\big |
&\le  3(\alpha_{t}n)^{1/2 + \varepsilon}  + d_{t}(2-\alpha_{t})e^{-\alpha_{t}}+7d_{t} n^{-\varepsilon} \\
& \le d_{t}(2-\alpha_{t})e^{-\alpha_{t}}+10d_{t} n^{-\varepsilon}.
\end{aligned}
\end{equation}
To understand \eqref{eq:bound_for_I_t+1} consider the auxiliary function 
$$H(x)= \sqrt{\frac{f(x)}{x}}-\frac{f'(x)}{\sqrt{2}}=\sqrt{\frac{1-(1-x)e^{-x}}{x}}-\frac{(2-x)e^{-x}}{\sqrt{2}}.$$
 As $(1-x)e^{-x}=1-2x+O(x^2)$ as $x\to 0$ we have that $\lim_{x\to 0} (1-(1-x)e^{-x})/x=2$ and thus $\lim_{x\to 0} H(x)=0$. Furthermore is $H$ an increasing function on the interval $[0,1]$ as, 
 $$H'=\frac{1}{2}\left (\frac{2(1-x)e^{-x}}{x^2}\right )^{-1/2}+\frac{(3-x)e^{-x}}{\sqrt{2}}\ge 0\quad \text{for}\quad x\le 1.$$
Therefore $H(x)\ge 0$ for all $0\le x\le 1$ and consequently, using $\alpha_{t+1}>\alpha_{t}$,
$$\left( \frac{\alpha_{t+1}}{\alpha_{t}}\right )^{1/2+\varepsilon} \ge \left( \frac{\alpha_{t+1}}{\alpha_{t}}\right )^{1/2} \geq \frac{(2-\alpha_{t})e^{-\alpha_{t}}}{\sqrt{2}}.$$
Since $d_{t} = \alpha_{t}^{1/2 + \varepsilon}n^{1/2+2\varepsilon}\sqrt{2+\varepsilon}^{t-T}$, applying the previous bound to~\eqref{eq:bound_for_I_t+1} implies \eqref{eq:claim_1} for all $n\ge n_0$, that is, all $n$ such that $\sqrt{2}+10n^{-\varepsilon}<\sqrt{2+\varepsilon}$. This completes the induction step and the proof of~\eqref{eq:auxassumption} is completed.

Actually our arguments yield also the following statement, which is stronger than~\eqref{eq:auxassumption} when there are ``many'' informed nodes. In particular,
for all $t'\in\N$ such that $(2-\alpha_{t'})e^{-\alpha_{t'}}<1-\varepsilon$ Equation~\eqref{eq:bound_for_I_t+1} also yields for all $n\ge n_0$ 
$$\big||I_{T+t'}|-\alpha_{T+t'}n\big|\le  d_{t'} \quad \Rightarrow\quad \big||I_{T+t'+1}|-\alpha_{T+t'+1}n\big|\le  d_{t'},$$
meaning that the absolute error does not increase any more after round $t'$. (Actually the error decreases by a factor of at least $\varepsilon$ after that round, but we do not need this.) To complete the proof we show that we can choose $t'$ such that $d_{t'} \le n^{1-c/4}$ and $(2-\alpha_{t'})e^{-\alpha_{t'}}<1-\varepsilon$. To this end, consider
$$T'=\floor{\log_2 n}-4 -T$$
and applying Lemma \ref{lemma1} to $\alpha_{T'}$ yields
\begin{align*}
\alpha_{T+T'}=f^{(T')}(|I_T|/n)\ge f^{(\floor{\log_2 n}-4-T)}\left (2^{T}/n\right )\ge 2^{-4}\big(1-2^{-8+1}\big)
\end{align*}
and furthermore, a simple computation yields that $\alpha_{T+T'+5} \ge 3/4$.
%using $f(1-x)=1-g(x)=1-xe^{x-1},$ 
%\begin{align*}
%\alpha_{T'+11}&\ge f^{(11)}\left (2^{-4}\big(1-2^{-8+1}\big)\right )=1-g^{(11)}\left (1-2^{-4}\big(1-2^{-8+1}\big)\right )
%\\&\ge 1-\left (1-2^{-4}\big(1-2^{-8+1}\big)\right ) \exp \left (-11\cdot 2^{-4}\big(1-2^{-8+1}\big)\right )\ge \frac{1}{2}.
%\end{align*}
Thus  $$(2-\alpha_{T+T'+5})e^{-\alpha_{T+T'+5}}\le (2-3/4)e^{-3/4}<1-\varepsilon$$ and we set $t' := T'+5$. Moreover,
\begin{align*}
 d_{t'} \le n^{1/2+2\varepsilon}\sqrt{2+\varepsilon}^{t'}
 \le (2+\varepsilon)n^{1/2+2\varepsilon+(1-c)\log_2(2+\varepsilon)/2}.
\end{align*}
Note that $\log_2(2+\varepsilon) \le 1 + \varepsilon$. Plugging this into the exponent yields that if $\varepsilon<c/10$ and $n$ is large enough then $d_{t'} \le n^{1-c/4} (\le n^{1-\varepsilon})$, as claimed.
\end{proof} 
\subsection{Proof of Lemma \ref{lemma1}}\label{proof_lemma1}
We begin with showing the basic inequality
\begin{equation}
\label{eq:fbounds}
	2x(1-x) \le f(x) \le 2x.
\end{equation}
To see this, note that $e^{-x} \le 1 - x + x^2/2$ for $x\in[0,1]$ and so
\begin{align*}
f(x)&=1-e^{-x}(1-x)\geq 1-\left(1-x+\frac{x^2}{2}\right)(1-x) \geq x\left(2-\frac{3}{2}x\right)\geq 2x-2x^2,
\end{align*}
which establishes the first inequality in~\eqref{eq:fbounds}. The other inequality follows directly  from the simple bound $e^{-x} \ge 1 - x$.

Let us write $z_0 = 2^{t_0} / n$ and $z_i = f(z_{i-1}) = f^{(i)}(z_0)$; we want to bound $z_{t_1-t_0}$, where $t_1 = \floor*{\log_2 n} - a$ and $t_0 \le\floor*{0.49 \log_2n}$. Clearly $z_i \le 2^i z_0$, which shows the upper bound in Lemma \ref{lemma1}. Using~\eqref{eq:fbounds} we obtain by induction
\[
	z_i \ge 2^i z_0\cdot \prod_{j=0}^{i-1}(1 - 2^j z_0), \quad  i\in \mathbb{N}.
\]
Further, using the bound $1 - x \ge e^{-x-x^2/2(1-x)}$, valid for any $x\in[0,1)$ we obtain
\[
	z_i \ge  2^i z_0\cdot \exp\left\{-z_0 \sum_{0 \le j < i }2^j - z_0^2\sum_{0 \le j < i }\frac{4^j}{2(1-2^jz_0)}\right\}
\]
Note that our assumptions guarantee that $2^{t_1-t_0}z_0 = 2^{-a} < 0.1$, and so for any $1 \le i \le t_1-t_0$
\[
	z_i \ge  2^i z_0\cdot \exp\left\{-2^i z_0 - (2^iz_0)^2\right\} \ge 2^i z_0\cdot(1 - 2^{-a} - 2^{-2a}).
\]
Finally note that $1-y-y^2 \ge 1-2y$ for any $y \in [0,1]$, and so the last term is bounded by $2^i z_0 \cdot(1 - 2^{-2a+1})$, which coincides with the lower bound claimed in~Lemma \ref{lemma1}.
\begin{corollary}\label{boundf}
For all $x\in[0,1]$ and $i\in \N$
$$2^{i}x\big(1-2^ix-2^{2i}x^2\big)\le f^{(i)}(x)\le 2^ix.$$
\end{corollary}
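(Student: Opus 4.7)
The upper bound $f^{(i)}(x) \le 2^i x$ is a one-line induction on $i$: the inequality $f(y) \le 2y$ from \eqref{eq:fbounds} combined with the monotonicity of $f$ gives $f^{(i+1)}(x) = f(f^{(i)}(x)) \le 2 f^{(i)}(x) \le 2^{i+1} x$.

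For the lower bound $f^{(i)}(x) \ge 2^i x(1 - 2^i x - 2^{2i} x^2)$, note first that if $2^i x \ge 1$ then the right-hand side is already $\le 0$, so the bound is trivial from $f^{(i)}(x) \ge 0$. Hence we may assume $2^i x < 1$. I would then essentially repackage the computation inside the proof of Lemma \ref{lemma1}. Starting from $f(y) \ge 2y(1-y)$ (the other half of \eqref{eq:fbounds}) and the upper bound $f^{(j)}(x) \le 2^j x$ just established, induction on $i$ yields
\[
f^{(i)}(x) \;\ge\; 2^i x \prod_{j=0}^{i-1}(1 - 2^j x).
\]
Applying $1 - y \ge \exp\bigl(-y - y^2/(2(1-y))\bigr)$, valid for $y \in [0,1)$, to each factor, and using the estimate $1 - 2^j x \ge 1 - 2^{i-1}x \ge 1/2$ which holds under the assumption $2^i x < 1$, I would bound the resulting sums by two geometric series $\sum_{j=0}^{i-1}2^j \le 2^i$ and $\sum_{j=0}^{i-1}4^j \le 4^i/3$, obtaining
\[
f^{(i)}(x) \;\ge\; 2^i x \exp\bigl(-2^i x - 4^i x^2/3\bigr).
\]
The final step is $e^{-z} \ge 1 - z$ together with $4^i x^2/3 \le 2^{2i} x^2$, which yields exactly the claimed inequality.

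There is no real obstacle here; the only thing to be careful about is the case split based on whether $2^i x < 1$, since that is what guarantees both that the product above has positive factors and that the first-order expansion $e^{-z} \ge 1 - z$ actually produces a meaningful (non-trivial) bound. The whole argument is a direct reorganization of the chain of inequalities already appearing in the proof of Lemma \ref{lemma1}, extracted into a statement that does not reference the specific quantities $z_0 = 2^{t_0}/n$.
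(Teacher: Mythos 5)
Your proof is correct and follows exactly the route the paper intends: Corollary~\ref{boundf} is stated as a byproduct of the computation inside the proof of Lemma~\ref{lemma1}, and you have correctly re-extracted that chain (the product form via \eqref{eq:fbounds}, the $1-y \ge e^{-y - y^2/(2(1-y))}$ step, the geometric-series bounds, and $e^{-z}\ge 1-z$), with the sensible addition of the case split on $2^ix \ge 1$ to make the statement hold for all $x\in[0,1]$ rather than just the constrained range used in Lemma~\ref{lemma1}.
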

\subsection{Proof of Lemma \ref{lemma3}}\label{proof_lemma3}

%\begin{definition}[Convergence in probability]
%\label{def_convergence_in_probability}
%For each $n \in \N$ let $X_n$ be a real valued random variable and suppose that there is a real valued random variable $X$ such that for all $\varepsilon>0$ 
%$$\lim\limits_{n \to \infty}P[|X_n-X|>\varepsilon]=0.$$ Then we say that $X_n$ converges in probability to $X$ and write $X_n \overset{p}\ra X$.
%\end{definition}

%We will need the following well-known notions of convergence for sequences of random variables given in Definitions \ref{def_convergence_in_distribution} and \ref{def_convergence_in_probability}.
%\begin{definition}[Convergence in distribution]
%\label{def_convergence_in_distribution}
%For each $n \in \N$ let $X_n$ denote a real-valued random variable with distribution function $F_n$ and let $X$ be a real-valued random variable with distribution function $F$. We say $X_n$ converges in distribution to $X$ and write $X_n \overset{d} \ra X$ if for every $x \in \R$ where $F$ is continuous it holds $\lim_{n \ra \infty}F_n(x)=F(x)$.
%\end{definition}
A main tool in the fortcoming proof is the following result, which  states that a sum of normalized independent geometric random variables converges to a Gumbel distributed random variable. 
\begin{theorem}[{\cite{erdos1961classical}}]
\label{erdos_renyi_gumbel}
Let $T_1,\dots, T_{n-1}$ be independent random variables such that 
 $T_i \sim \textnormal{Geo}((n-i)/(n-1))$ for $1 \le i < n$.  Then, in distribution
$$n^{-1}\sum \limits_{1 \le i < n}\big(T_i-\mathbb{E}[T_i]\big)\ra \textnormal{Gum}(\gamma).$$
\end{theorem}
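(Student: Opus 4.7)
The plan is to reduce $S_n := \sum_{i=1}^{n-1}T_i$ to the classical coupon collector time and then pin down its limiting distribution via inclusion--exclusion. Setting $m=n-1$, note that $S_n$ is distributed as the number of i.i.d.\ uniform draws from $\{1,\dots,m\}$ needed to see every label: starting from $i-1$ distinct labels collected, a fresh draw produces a new label with probability $(m-i+1)/m=(n-i)/(n-1)$, so $T_i$ is the geometric waiting time until that happens. Hence $S_n$ is the coupon collector time for $m$ types.

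Next I would apply inclusion--exclusion to the events $A_j = \{\text{label } j \text{ unseen after } k \text{ draws}\}$, using $P(A_{j_1}\cap\dots\cap A_{j_r})=(1-r/m)^k$ by symmetry:
\begin{equation*}
P(S_n > k) \;=\; P\!\Big(\bigcup_{j=1}^m A_j\Big) \;=\; \sum_{j=1}^{m}(-1)^{j+1}\binom{m}{j}\Big(1-\tfrac{j}{m}\Big)^{\!k}.
\end{equation*}
For $k=k_m(s):=\lfloor m\ln m+\gamma m+sm\rfloor$ with fixed $s\in\mathbb{R}$, the elementary bounds $\binom{m}{j}\le m^j/j!$ and $(1-j/m)^{k}\le e^{-jk/m}$ show that, uniformly in large $m$, the $j$-th term is bounded by a constant multiple of the summable sequence $e^{-j(s+\gamma)}/j!$. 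For each fixed $j$, $\binom{m}{j}/m^j\to 1/j!$ and $(1-j/m)^{k}= e^{-jk/m}(1+o(1))\to e^{-j(s+\gamma)}$, so dominated convergence gives
\begin{equation*}
P\big(S_n > k_m(s)\big)\;\longrightarrow\;\sum_{j=1}^\infty\frac{(-1)^{j+1}e^{-j(s+\gamma)}}{j!}\;=\;1-\exp\!\big(-e^{-s-\gamma}\big),
\end{equation*}
which is exactly $P(\mathrm{Gum}(\gamma)>s)$ in the paper's convention. Thus $S_n/m-\ln m-\gamma \xrightarrow{d}\mathrm{Gum}(\gamma)$.

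To match the normalization in the theorem statement I would switch from $m$ to $n$ and subtract the exact mean. Since $\mathbb{E}[T_i]=(n-1)/(n-i)$, $\mathbb{E}[S_n]=m H_m=m(\ln m+\gamma)+O(1)$, and a short rearrangement gives
\begin{equation*}
n^{-1}\!\sum_{i=1}^{n-1}(T_i-\mathbb{E}[T_i]) \;=\; \tfrac{m}{n}\Big(\tfrac{S_n}{m}-\ln m-\gamma\Big) + O\!\big(\tfrac{1}{n}\big).
\end{equation*}
Since $m/n\to 1$, Slutsky's theorem upgrades the distributional convergence of the previous paragraph to $n^{-1}\sum_{i}(T_i-\mathbb{E}[T_i])\xrightarrow{d}\mathrm{Gum}(\gamma)$, as required. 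The main obstacle is justifying the dominated convergence uniformly in $m$: the clean bound $e^{-j(s+\gamma)}/j!$ degrades mildly for $j$ close to $m$ because the floor in $k_m(s)$ and the inequality $(1-x)\le e^{-x}$ have matching constants, but in that regime $(1-j/m)^k$ is super-exponentially small (and the binomial coefficient is at most $2^m$), so the tail is negligible and no sharper analysis is needed.
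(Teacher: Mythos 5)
The paper states this result as a citation to Erd\H{o}s and R\'enyi~\cite{erdos1961classical} and supplies no proof of its own, so there is no in-paper argument to compare against. Your proof is correct and is essentially the classical Erd\H{o}s--R\'enyi argument: the identification of $S_n=\sum_i T_i$ with the $m=(n-1)$-type coupon collector time, the exact inclusion--exclusion formula for $P(S_n>k)$, and the dominated-convergence passage to $1-\exp(-e^{-s-\gamma})$ along $k=\lfloor m\ln m+\gamma m+sm\rfloor$ are all sound. The only point worth tightening is your parenthetical worry about $j$ near $m$: the bound $\binom{m}{j}(1-j/m)^{k}\le \frac{m^j}{j!}e^{-jk/m}\le e\cdot\frac{e^{-j(s+\gamma)}}{j!}$ already holds uniformly for all $1\le j\le m$ (the factor $e$ absorbs both the floor and $e^{j/m}$), so no separate treatment of the large-$j$ tail is needed. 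The final Slutsky step replacing the $1/m$ normalization by $1/n$, and subtracting $mH_m$ rather than $m(\ln m+\gamma)$, is also handled correctly, with the discrepancy being $O(1/n)$.
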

Unfortunately we can not  apply directly Theorem \ref{erdos_renyi_gumbel} to our setting, as we will have to deal with a sum of independent geometric random variables that are not normalized with the `correct' factor $n^{-1}$. However, the next well-known statement  assures that if the error is small enough we will still converge to the same limiting distribution
\begin{theorem}[{Slutsky's Theorem, see, e.g., \cite[p. 19]{serfling2009approximation}}]
\label{product_of_random_variables_conv_in_distr}
Let $(X_n)_{n \in \mathbb N}$, $(Y_n)_{n \in \mathbb N}$ and $(Z_n)_{n \in \mathbb N}$ be sequences of real-valued random variables. Suppose that $X_n\ra X$ in distribution  and that there are constants $a, b \in \R$ such that $Y_n \ra a$ and $Z_n\ra b$ in probability. Then $Y_n X_n + Z_n \ra a X + b$ in distribution.
\end{theorem}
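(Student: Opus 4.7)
The plan is to reduce the statement to two classical building blocks: a ``converging together'' lemma, asserting that if $X_n\to X$ in distribution and $W_n\to 0$ in probability then $X_n+W_n\to X$ in distribution; and the trivial fact that $X_n\to X$ in distribution implies $aX_n+b\to aX+b$ in distribution. Writing
\[
Y_n X_n + Z_n = (aX_n + b) + R_n, \qquad R_n := (Y_n-a)X_n + (Z_n-b),
\]
it will suffice to show that $R_n\to 0$ in probability and then glue the pieces together.

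First I would establish the converging together lemma. Let $t$ be a continuity point of the distribution function $F$ of $X$. For any $\epsilon>0$ chosen so that $t\pm\epsilon$ are also continuity points of $F$ (possible since $F$ has at most countably many discontinuities, so continuity points are dense), the union bound gives
\[
P[X_n \le t-\epsilon] - P[|W_n|>\epsilon] \le P[X_n+W_n \le t] \le P[X_n \le t+\epsilon] + P[|W_n|>\epsilon].
\]
Letting $n\to\infty$ and using $X_n\to X$ in distribution together with $P[|W_n|>\epsilon]\to 0$ sandwiches $\liminf$ and $\limsup$ of $P[X_n+W_n\le t]$ between $F(t-\epsilon)$ and $F(t+\epsilon)$. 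Sending $\epsilon\to 0$ through continuity points and using continuity of $F$ at $t$ completes this step.

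Next I would prove that $R_n\to 0$ in probability. The term $Z_n-b\to 0$ in probability holds by hypothesis, so it remains to control $(Y_n-a)X_n$. The crucial input here is tightness: since $X_n\to X$ in distribution, the family $(X_n)_n$ is tight, so given any $\delta>0$ I can choose $M$ such that $\sup_n P[|X_n|>M]<\delta$. Then for every $\eta>0$,
\[
P\bigl[|(Y_n-a)X_n|>\eta\bigr] \le P[|X_n|>M] + P\bigl[|Y_n-a|>\eta/M\bigr] < \delta + o(1),
\]
where the $o(1)$ comes from $Y_n\to a$ in probability; since $\delta$ was arbitrary, $(Y_n-a)X_n\to 0$ in probability, and therefore so does $R_n$ by the triangle inequality for probability bounds.

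Finally, $aX_n+b\to aX+b$ in distribution follows from the definition (the case $a=0$ is a triviality), and then applying the converging together lemma to $aX_n+b$ and the error $R_n$ yields $Y_n X_n+Z_n\to aX+b$ in distribution. The only delicate point is controlling $(Y_n-a)X_n$ when $a=0$, where the limit $aX+b=b$ is deterministic but the factor $X_n$ need not be small; this is exactly where tightness of $(X_n)_n$ is indispensable and is the main conceptual obstacle. Everything else is bookkeeping with $\epsilon$-bounds on distribution functions.
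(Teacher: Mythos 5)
Your proof is correct. Note, however, that the paper does not prove this statement at all: it is quoted as a classical result (Slutsky's Theorem) with a reference to Serfling's book, so there is no in-paper argument to compare against. What you give is the standard textbook proof: the decomposition $Y_nX_n+Z_n=(aX_n+b)+R_n$ with $R_n=(Y_n-a)X_n+(Z_n-b)$, the additive ``converging together'' lemma proved by the sandwich $P[X_n\le t-\epsilon]-P[|W_n|>\epsilon]\le P[X_n+W_n\le t]\le P[X_n\le t+\epsilon]+P[|W_n|>\epsilon]$ at continuity points, and tightness of $(X_n)_n$ (which indeed follows from convergence in distribution) to kill the product term $(Y_n-a)X_n$. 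All steps are sound, including the care taken to choose $t\pm\epsilon$ among continuity points and the handling of $a=0$; this is a complete and self-contained justification of the cited theorem, which is exactly the level of detail the paper delegates to the reference.
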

We now show a more general version of Theorem \ref{erdos_renyi_gumbel} that is applicable to our setting.
\begin{lemma}
\label{thm_approximate_gumbel_sum_of_deviations}
Let $T_1,\dots, T_{n-1}$ be independent random variables such that 
 $T_i \sim \textnormal{Geo}((n-i)/(n-1))$ for $1 \le i < n$. Let furthermore $\varepsilon>0$ and $s:\mathbb{N}\to [1,n]$ be a function such that $s(n-i) \geq \big(1-o(1)\big)(n-c\cdot i)$ for any positive integer $i <\varepsilon n.$
Then, in distribution
\begin{align*}
\sum\limits_{(1-\varepsilon) n\le i < n} \frac{T_i-\mathbb{E}[T_i]}{s(i)}  \ra \textnormal{Gum}(\gamma) .
\end{align*}
\end{lemma}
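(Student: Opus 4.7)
The plan is to reduce the lemma to the Erdős–Rényi theorem (Theorem~\ref{erdos_renyi_gumbel}) by a telescoping decomposition and then invoke Slutsky (Theorem~\ref{product_of_random_variables_conv_in_distr}) to finish. Concretely, I would begin from the algebraic identity
\begin{align*}
\sum_{(1-\varepsilon)n \le i < n}\frac{T_i-\mathbb{E}[T_i]}{s(i)}
&= \frac{1}{n}\sum_{1\le i<n}(T_i-\mathbb{E}[T_i]) - \frac{1}{n}\sum_{1\le i<(1-\varepsilon)n}(T_i-\mathbb{E}[T_i]) \\
&\quad + \sum_{(1-\varepsilon)n \le i < n}(T_i-\mathbb{E}[T_i])\left(\frac{1}{s(i)}-\frac{1}{n}\right).
\end{align*}
Theorem~\ref{erdos_renyi_gumbel} provides the convergence of the first sum to $\textnormal{Gum}(\gamma)$, so it suffices to show that the other two sums vanish in probability; two applications of Slutsky's theorem then produce the stated limit.

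The \emph{middle sum} is the easy piece. For $i<(1-\varepsilon)n$ one has $n-i>\varepsilon n$ and hence $\textnormal{Var}(T_i)=(i-1)(n-1)/(n-i)^2\le 1/\varepsilon^2$. By independence of the $T_i$, the variance of the normalized middle sum is $O(1/(\varepsilon^2 n))$, and Chebychev's inequality yields convergence to $0$ in probability.

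The \emph{correction sum} is the delicate piece, and this is where I expect the main obstacle to lie. The issue is that $\textnormal{Var}(T_i)$ blows up like $n^2/(n-i)^2$ as $i\uparrow n$, precisely where $1/s(i)-1/n$ should be very small; the hypothesis on $s$ has to be quantitatively strong enough for these two effects to cancel. Assuming $\varepsilon$ is small enough that $c\varepsilon<1$ (forcing $s(i)=\Omega(n)$ throughout the summation range, which matches the intended applications of the lemma), the hypothesis yields
$$0\le \frac{1}{s(i)}-\frac{1}{n}=\frac{n-s(i)}{n\,s(i)}\le \frac{c(n-i)+o(n)}{n\,s(i)}=O\!\left(\frac{n-i}{n^2}\right)+o(1/n).$$
Squaring this and multiplying by $\textnormal{Var}(T_i)\le n^2/(n-i)^2$ gives a per-term variance of $O(1/n^2)+o(1/(n-i)^2)$. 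Summing over $1\le n-i\le \varepsilon n$ produces a total variance of $O(\varepsilon/n)+o(1)\cdot\sum_{k\ge 1}k^{-2}=o(1)$, so Chebychev's inequality makes the correction sum tend to $0$ in probability, and Slutsky's theorem delivers the claim.
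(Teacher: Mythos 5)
Your proof is correct and follows essentially the same route as the paper: the identical telescoping decomposition into the full Erd\H{o}s--R\'enyi sum, the prefix $\frac{1}{n}\sum_{i<(1-\varepsilon)n}D_i$, and the $\bigl(1/s(i)-1/n\bigr)$-correction, with Chebychev killing the latter two and Slutsky assembling the limit. The only small point worth making explicit is the assumption $c\varepsilon<1$ (so that $s(i)=\Omega(n)$ on the summation range), which you rightly flag; the paper relies on the same fact implicitly when it simplifies $\frac{1}{(1-o(1))(n-c i)}-\frac1n$ to $(1+o(1))\frac{ci+o(n)}{n^2}$.
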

\begin{proof}
Let  $D_i=T_i-\mathbb{E}[T_i]$ be the centralised version of $T_i$. Then
\begin{align*}
\sum\limits_{(1-\varepsilon) n\le i < n} \frac{D_i}{s(i)}=\sum\limits_{1\le i<n} \frac{D_i}{n}-\sum_{1\le i<(1-\varepsilon)n} \frac{D_i}{n}+\left (\sum_{(1-\varepsilon) n\le i < n} \frac{D_i}{s(i)}-\sum_{(1-\varepsilon) n\le i < n} \frac{D_i}{n}\right ).
\end{align*}
A direct applicaition  of Theorem \ref{erdos_renyi_gumbel} guarantees that the first sum converges to Gum$(\gamma)$ in distribution.
To complete the proof it is sufficient to argue that in probability
\begin{align}
\label{eq:tmpzeroterms}
  \sum_{1\le i<(1-\varepsilon)n} \frac{D_i}{n} \ra 0
 \quad\text{and}\quad 
  \sum_{(1-\varepsilon) n\le i < n} \left (\frac{D_i}{s(i)}- \frac{D_i}{n}\right )\ra 0,
\end{align}
from which the claim in the lemma follows immediately from Theorem \ref{product_of_random_variables_conv_in_distr}. Since the $D_i$'s are centralised
$$\mathbb{E}\left [\sum_{1\le i<(1-\varepsilon)n} \frac{D_i}{n}\right ]=0,$$
and using that $\text{Var}[T_i]=\big((n-1)(i-1)\big)/(n-i)^2$ for all $i<n$
$$\text{Var}\left [\sum_{1\le i<(1-\varepsilon)n} \frac{D_i}{n}\right ]=\sum_{1\le i<(1-\varepsilon)n} \frac{\text{Var}[T_i]}{n^2}=\sum_{1\le i<(1-\varepsilon)n} \frac{1}{n^2}\frac{(n-1)(i-1)}{(n-i)^2}\le \sum_{1\le i<(1-\varepsilon)n} \frac{1}{(\varepsilon n)^2}=o(1).
$$
Thus Chebychev's inequality directly implies that $$\sum_{1\le i<(1-\varepsilon)n}\frac{D_i}{n} \ra 0\quad \text{in probability}.$$
It remains to treat the second term in~\eqref{eq:tmpzeroterms}. 
We compute the variance as before
\begin{align*}
\text{Var}\left [\sum_{(1-\varepsilon) n\le i < n} \frac{D_i}{s(i)} -\frac{D_i}{n}\right ]&=
%\sum\limits_{i=(1-\varepsilon)n}^{n-1}\left (\frac{1}{s(i)}-\frac{1}{n}\right )^2\text{Var}[T_i]=
\sum_{(1-\varepsilon) n\le i < n}\Big (\frac{1}{s(i)}-\frac{1}{n}\Big )^2\frac{(n-1)(i-1)}{(n-i)^2}
\le \sum\limits_{1\le i\le\varepsilon n}\Big(\frac{1}{s(n-i)}-\frac{1}{n}\Big)^2\frac{n^2}{i^2}.
\end{align*}
However, this is also $o(1)$, 
as  $s(n-i)\ge \big(1-o(1)\big)(n-c\cdot i)$ for all integers $i\le \varepsilon n$ by assumption, and therefore
\begin{align*}
0\le \frac{1}{s(i)}-\frac{1}{n}\le\frac{1}{(1+o(1))(n-c\cdot i)}-\frac{1}{n}= (1+o(1))\frac{c\cdot  i+o(n)}{n^2}, \quad i\le \varepsilon n.
\end{align*}
% Set $h=n^{-1/3}$, $\alpha\in \{h,\varepsilon\}$ and $i\in [1-\alpha n,n]\cap\N$. Thus using $s((1-\alpha)n) \geq n-e\alpha n + \mathcal{O}(n^{2/3})$ and $s$ being an increasing function we get
%\begin{align*}
%0\le \frac{1}{s(i)}-\frac{1}{n}\le \frac{1}{n-e\alpha n+\mathcal{O}(n^{2/3})}-\frac{1}{n}=\begin{cases}\mathcal{O}(n^{-1}),& \alpha= \varepsilon\\
%\mathcal{O}(n^{-4/3}),&\alpha=h
%  \end{cases}.
%\end{align*}
%We now apply this estimate to the above sum, which we split into two parts \marginpar{\tiny Wiederholung}. The first part is
%\begin{align*}
%\sum\limits_{i=(1-\varepsilon)n}^{(1-h)n-1}\left (\frac{1}{s(i)}-\frac{1}{n}\right )^2\frac{(n-1)(i-1)}{(n-i)^2}\le \mathcal{O}(n^{-2})\sum\limits_{i=(1-\varepsilon)n}^{(1-h)n-1}\frac{1-h}{h^2}=\mathcal{O}(n^{-2})\cdot\mathcal{O}(n)\cdot\mathcal{O}(n^{2/3})=o(1)
%\end{align*}
%and the second part is
%\begin{align*}
%\sum\limits_{i=(1-h)n}^{n-1}\left (\frac{1}{s(i)}-\frac{1}{n}\right )^2\frac{(n-1)(i-1)}{(n-i)^2}\le \mathcal{O}(n^{-8/3})\sum\limits_{i=(1-h)n}^{n-1}n^2=\mathcal{O}(n^{-8/3})\cdot\mathcal{O}(n^{1/3})\cdot\mathcal{O}(n^{2})=o(1).
%\end{align*}
In summary we have shown that 
\begin{align*}
\text{Var}\left [\sum_{(1-\varepsilon) n\le i < n} \left (\frac{D_i}{s(i)}- \frac{D_i}{n}\right )\right ]=o(1)\quad \text{and clearly }\quad \mathbb{E}\left [\sum_{(1-\varepsilon) n\le i < n} \left (\frac{D_i}{s(i)}- \frac{D_i}{n}\right )\right ]=0.
\end{align*}
Thus Chebychev's inequality implies also the second statement in~\eqref{eq:tmpzeroterms} and the proof is complete.
\end{proof}
A further ingredient that we shall exploit is the following fact. If a sequence of random variables $X_n \ra X$ in distribution with distribution functions $F_n\to F$ and if $F$ is continuous everywhere, then the convergence of $F_n$ to $F$ is even uniform.
\begin{theorem}[{Polya's Theorem, {\cite[Theorem 1]{polya1920}}}]
\label{9}
For each $n \in \N$ let $X_n$ be a real-valued random variable with distribution function $F_n$. Assume that $X_n \ra X$ in distribution. If $X$ has continuous distribution function $F$, then
$$\lim\limits_{n \ra \infty}\sup\limits_{x \in \mathbb R} |F_n(x) - F(x)| =0.$$
\end{theorem}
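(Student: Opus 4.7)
The plan is to combine uniform continuity of the limit distribution function $F$ with the monotonicity of every $F_n$ in order to upgrade the pointwise convergence implicit in the definition of convergence in distribution to uniform convergence. First I would observe that since $F$ is continuous, nondecreasing, and satisfies $F(-\infty)=0$, $F(+\infty)=1$, it is in fact uniformly continuous on all of $\R$. Fix $\varepsilon>0$. Using the intermediate value theorem applied to $F$, pick finitely many reals $x_0<x_1<\cdots<x_K$ such that $F(x_0)<\varepsilon$, $F(x_K)>1-\varepsilon$, and $F(x_{j+1})-F(x_j)<\varepsilon$ for every $0\le j<K$.

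Next I would use monotonicity of both $F_n$ and $F$ to sandwich the difference $F_n(x)-F(x)$ on each grid interval. For any $x\in[x_j,x_{j+1}]$ one has
\begin{align*}
F_n(x)-F(x)&\le F_n(x_{j+1})-F(x_j)=\bigl(F_n(x_{j+1})-F(x_{j+1})\bigr)+\bigl(F(x_{j+1})-F(x_j)\bigr),\\
F_n(x)-F(x)&\ge F_n(x_j)-F(x_{j+1})=\bigl(F_n(x_j)-F(x_j)\bigr)-\bigl(F(x_{j+1})-F(x_j)\bigr),
\end{align*}
so both one-sided errors are at most $\max_j|F_n(x_j)-F(x_j)|+\varepsilon$. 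For the tails, when $x<x_0$ one uses $0\le F_n(x)\le F_n(x_0)$ and $0\le F(x)\le F(x_0)<\varepsilon$, so $|F_n(x)-F(x)|\le \max\{F_n(x_0),\varepsilon\}\le |F_n(x_0)-F(x_0)|+\varepsilon$. The case $x>x_K$ is symmetric, using $1-F(x_K)<\varepsilon$.

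Finally, since $F$ is continuous at every $x_j$, the hypothesis $X_n\to X$ in distribution yields $F_n(x_j)\to F(x_j)$ for each of the finitely many $j\in\{0,\dots,K\}$. Hence one can choose $N$ such that $|F_n(x_j)-F(x_j)|<\varepsilon$ for all such $j$ and all $n\ge N$. Inserting this into the bracketing bounds above gives $\sup_{x\in\R}|F_n(x)-F(x)|\le 2\varepsilon$ for every $n\ge N$, and since $\varepsilon$ was arbitrary this proves the claimed uniform convergence.

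There is no real obstacle here; the argument is a standard finite-grid compactness trick that works precisely because a continuous cumulative distribution function is automatically uniformly continuous on $\R$. The only subtlety worth noting is the handling of the two tails $x<x_0$ and $x>x_K$, which on their own are not covered by a bounded partition argument, but are controlled via monotonicity together with the choice $F(x_0)<\varepsilon$ and $F(x_K)>1-\varepsilon$; a careful writeup just needs to ensure that the tail estimates are included explicitly in the final supremum bound.
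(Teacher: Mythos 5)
Your argument is correct and complete: the finite grid chosen via the intermediate value theorem, the monotonicity bracketing on each cell $[x_j,x_{j+1}]$, the explicit tail estimates using $F(x_0)<\varepsilon$ and $1-F(x_K)<\varepsilon$, and the final appeal to pointwise convergence at the finitely many grid points (all of which are continuity points of $F$) together give $\sup_{x\in\R}|F_n(x)-F(x)|\le 2\varepsilon$ for large $n$, which is exactly the claim. Note that the paper does not prove this statement at all -- it is quoted as Pólya's Theorem with a citation to the original 1920 paper -- so there is no internal proof to compare against; what you have written is the standard textbook proof of that classical result, and it is a valid substitute for the citation.
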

We need one more auxiliary lemma that gives an upper bound on the informed nodes when going one round backwards in order to later convert the number of Coupons into the number of rounds that are needed to finish the protocol.
Appropriately, Lemma \ref{concentration_chernoff}  assures that in all rounds the number of informed nodes is tightly concentrated around its expectation, which in turn is described by $f$, thus applying $f^{-1}$ will give a good bound.
\begin{lemma}
\label{11}
Let $t_0\in \mathbb{N}$ and $0<\varepsilon < 1/6.$ Let $C_t$ be the event that $\big||I_{t+1}|-E_t[|I_{t+1}|]\big|\leq (E_t[|I_{t+1}|])^{1/2+\varepsilon}$, as given in Lemma \ref{concentration_chernoff}. Then  for $n$ large enough the event $\bigcap_{t \geq t_0}C_t$ implies   for all $t \ge t_0$
$$|I_{t}| \geq \big(1-n^{-1/3}\big)\cdot e\cdot \big (|I_{t+1}| - (1-1/e)n\big) .$$
%Consider the (random) sequence of informed nodes $(I_t)_{t \in \N}$ given by the \Push process on the complete graph. 
%Let $c=1-1/e$.
%For $t \in \N$ let $B_t$ denote the event that $|I_{t-1}| \geq e(|I_t| - cn) - 9 n^{0.8}$.
% Then
%$$P\left[\bigcap\limits_{t \in \N}B_t\right]=1 + \mathcal{O}(n^{-0.02}).$$
\end{lemma}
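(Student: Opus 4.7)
The plan is to invert the approximate recursion $|I_{t+1}| \approx f(|I_t|/n)n$ using the concavity of $f$. The key algebraic fact is the tangent-line bound at $x = 1$: since $f$ is concave on $[0,1]$ with $f(1) = 1$ and $f'(1) = e^{-1}$, we have $f(x) \le 1 + (x-1)/e$ on $[0,1]$; equivalently, writing $g(u) = u e^{u-1} = 1 - f(1-u)$, $g$ is convex with $g(0) = 0$ and $g'(0) = e^{-1}$, so $g(u) \ge u/e$. Rearranged, this gives the clean ``inverse'' implication
\[
x \ge e f(x) - e + 1 = e\bigl(f(x) - (1 - 1/e)\bigr) \qquad \text{for every } x \in [0,1].
\]

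Conditioning on $\bigcap_{t \ge t_0} C_t$, I combine the concentration inequality from $C_t$ with Lemma \ref{expboundapplied} to obtain, for $n$ sufficiently large and every $t \ge t_0$,
\[
|I_{t+1}| \le \mathbb{E}_t[|I_{t+1}|] + \mathbb{E}_t[|I_{t+1}|]^{1/2+\varepsilon} \le f(|I_t|/n)\, n + 2 n^{1/2+\varepsilon}.
\]
Dividing by $n$ gives $f(|I_t|/n) \ge |I_{t+1}|/n - 2 n^{-1/2+\varepsilon}$, and plugging this into the inverse implication yields
\[
|I_t| \ge e\bigl(|I_{t+1}| - (1 - 1/e) n\bigr) - 2e\, n^{1/2+\varepsilon}.
\]

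The main obstacle is absorbing the additive error $O(n^{1/2+\varepsilon})$ into the multiplicative slack $(1 - n^{-1/3})$ appearing in the target, which I will handle by a short case split on $A := |I_{t+1}| - (1-1/e)n$. If $A \le 0$ the target right-hand side is non-positive and the inequality is trivial. If $A > 2 n^{5/6+\varepsilon}$ then $n^{-1/3} e A > 2 e n^{1/2+\varepsilon}$, which more than compensates for the additive error in the displayed bound and yields the claim directly. Finally, if $0 < A \le 2 n^{5/6+\varepsilon}$, the target is $O(n^{5/6+\varepsilon}) = o(n)$, while $|I_{t+1}| > (1 - 1/e) n$ combined with the deterministic bound $|I_{t+1}| \le 2 |I_t|$ (each informed node pushes exactly once per round) forces $|I_t| \ge \frac{1}{2}(1 - 1/e) n = \Omega(n)$, which dominates for large $n$. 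The hypothesis $\varepsilon < 1/6$ is used precisely here to ensure $5/6 + \varepsilon < 1$, so that the threshold $2 n^{5/6+\varepsilon}$ is genuinely $o(n)$ and the intermediate regime is dominated by the linear-in-$n$ lower bound.
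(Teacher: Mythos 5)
Your proof is correct and uses the same core inequality as the paper: the linearisation of $f$ at $x=1$ (the paper obtains it by noting $e^{-x}\ge 1/e$ for $x\in[0,1]$, which is exactly your tangent-line bound $f(x)\le (1-1/e)+x/e$), combined with the concentration event $C_t$ and Lemma~\ref{expboundapplied}. Your three-way case split on $A=|I_{t+1}|-(1-1/e)n$ makes explicit the ``rearranging'' step that the paper leaves implicit; in particular the middle regime $0<A\le 2n^{5/6+\varepsilon}$, handled via $|I_{t+1}|\le 2|I_t|$, is a detail the paper glosses over but which is needed for the lemma to hold for \emph{all} $t\ge t_0$ rather than just the large-$|I_{t+1}|$ range actually used downstream.
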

\begin{proof}
%We use the notation introduced in Lemma \ref{6}. Let $\delta=0.01$.
%From Lemma \ref{6}, we obtain that with probability $1+\mathcal{O}(n^{-2 \delta})$ for all $t \in \N$ 
Lemma \ref{expboundapplied} and $C_t$ together give that
%\begin{align*}
%|I_t| \leq \alpha_t n + n^{0.8} = f(\alpha_{t-1})n+n^{0.8}.
%%\end{align*}
%Thus, using Lemma \ref{bound_with_derivative}, we continue with 
\begin{align*}
|I_{t+1}| &\leq E_{t}[|I_{t+1}|]|+ (E_t[|I_{t+1}|])^{1/2+\varepsilon} = f(|I_{t}|/n)n + o\big(n^{2/3}\big) .
\end{align*} 
Using the definition of $f(x)=1-(1-x)e^{-x}$ and that $|I_t|\le n$ for all $t$ we get that 
\begin{align*}
|I_{t+1}| &\leq  n-e^{-|I_{t}|/n}(n-|I_{t}|)+o\big(n^{2/3}\big) \leq (1-1/e)n+ |I_{t}|/e + o\big(n^{2/3}\big).
\end{align*}
Rearranging yields the claimed statement.
\end{proof}
Let us briefly outline the proof of Lemma \ref{lemma3}. We have already shown bounds for the number of informed nodes after $\floor{\log_2 n} -a+b$ rounds in~\eqref{eq:start_main}. Starting from these bounds we will use the Coupon Collector Problem to compute the number of \emph{pushes} that are needed to inform all remaining uninformed nodes. This will yield sums of independent geometric random variables (one summand for each uninformed node). Using Lemma \ref{11} we will translate these numbers of pushes into numbers of rounds, which results in an almost correctly normalised sum of geometric random variables that Lemma \ref{thm_approximate_gumbel_sum_of_deviations} assures to converge to a Gumbel distribution. We will end up with upper and lower bounds to the distribution function of \emph{push}.
\begin{proof}[Proof of Lemma \ref{lemma3}] In this proof we will establish a connection between the Coupon Collector Problem and the behavior of \push. Let $v \in V$ be the node that was initially informed. Instead of every informed node choosing one of its neighbours uniformly at random, we now assume that it samples one node in $V\setminus \{v\}$ uniformly at random. This defines an equivalent model, as for all $u\in V$ the probability to choose any specific node in $V\setminus\{u,v\}$ does not change (it equals $1/(n-1)$ in both models) and choosing $u$ or $v$ makes no difference for the distribution of the set of informed nodes. Thus \push is the same as drawing coupons out of a pool of $n-1$ different coupons, but doing so in batches with size being the number of distinct coupons already collected plus one, the `plus one' representing the initially informed node $v$.
%For example we start with 5 different coupons collected. Then we draw a batch of 5 new coupons of which two are new. Afterwards we draw a batch of 7 new coupons of which 3 are new and so forth. 
It is widely known and easy to see that assuming $1\le i\le n-1$ coupons (including $v$) have already been collected, then
\begin{equation}
\label{Y_i_are_independent_geometric_random_variables}
T_i \sim \text{Geo}\left(\frac{n-i}{n-1}\right), \quad 1\le i\le n-1.
\end{equation}
describes the number of coupons one needs to draw in order to draw the next, $(i+1)$st new, distinct coupon. Thus in order to collect all $n$ coupons one needs to draw $\sum_{i=1}^{n-1}T_i$ coupons, where the summands are independent random variables. However, we are not particularly interested in the total number of coupons drawn, but in the number of batches needed. If a batch has size $s\le n-1$, then this batch is worth $s$ coupons, or vice versa, each coupon drawn in this batch is worth $1/s$ batches. Thus we need to estimate the size of the batch that contained all coupons that were needed to draw the $(i+1)$st distinct coupon, or if these coupons were contained in multiple batches, then we bound all those involved -- we call these batches the batches that are \emph{linked to~$i+1$}. Let $L_i$ be the smallest and $U_i$ the largest size of a batch linked to the $(i+1)$st  coupon. Then certainly $U_i\le i$, as at the time that the $(i+1)$st distinct coupon gets collected there are obviously at most $i$ distinct collected coupons.
Using our assumption $\ell \cdot n\le |I_{\floor{\log_2 n}-a+b}|\le u\cdot  n$ we thus obtain
\begin{align}\label{eq:geom_bound}
\ceil*{\sum\limits_{i=\floor*{un}}^{n-1} \frac{T_i}{U_i}} \leq X_n-\big(\floor{\log_2 n}-a+b\big) \leq \ceil*{\sum\limits_{i=\ceil*{\ell n}}^{n-1} \frac{T_i}{L_i}}.
\end{align}
Abbreviating $Y_n=X_n-(\floor{\log_2 n}-a+b)$ and recalling that $U_i\le i$ yields
\begin{align*}
 Y_n \ge \ceil*{\sum\limits_{i=\floor*{un}}^{n-1} \frac{T_i}{U_i}}= \ceil*{\sum\limits_{i=\floor*{un}}^{n-1} \frac{T_i}{i}}.
\end{align*}
As the $T_i$ are independent and geometrically distributed, we can apply Lemma \ref{thm_approximate_gumbel_sum_of_deviations} and for $G\sim  \text{Gum}(\gamma)$ we obtain with Theorem \ref{9}
\begin{align*}
\sup_{k \in \mathbb{Z}} \left| P\bigg [ \sum\limits_{i=\floor*{un}}^{n-1} \frac{T_i - \mathbb{E}[T_i]}{i}\ge k\bigg] - P\left [ G\ge k\right ] \right| = o(1)
\end{align*}
and therefore 
\begin{align*}
\ceil*{\sum\limits_{i=\floor*{un}}^{n-1} \frac{T_i}{i}} =  \ceil*{\sum\limits_{i=\floor*{un}}^{n-1}\frac{\mathbb{E}[T_i]}{i} + \sum\limits_{i=\floor*{un}}^{n-1} \frac{T_i - \mathbb{E}[T_i]}{i}} \succsim\ceil*{\sum\limits_{i=\floor*{un}}^{n-1}\frac{1}{i(1-i/n)} + G}.
\end{align*}
The partial fraction decomposition
$\big(i(1-i/n)\big)^{-1}=(n-i)^{-1}+i^{-1}$
allows us to simplify
\begin{align*}
\ceil*{\sum\limits_{i=\floor*{un}}^{n-1}\frac{1}{i(1-i/n)} + G}=
\ceil*{\sum\limits_{i=\floor*{un}}^{n-1} \frac{1}{n-i} + \sum\limits_{i=\floor*{un}}^{n-1} \frac{1}{i} + G}  =
\ceil*{\sum\limits_{i=1}^{n-\floor*{un}}\frac{1}{i}  +  \sum\limits_{i=\floor*{un}}^{n-1}\frac{1}{i} + G}.
\end{align*}
Expressing these partial harmonic sums using the asymptotic expansion for the $n$th harmonic number
\begin{equation}
\label{approximation_for_nth_harmonic_number}
\sum\limits_{1\le k \le n} k^{-1}=H_n= \ln n + \gamma + {O}(1/n) 
\end{equation}
we get, using Lemma \ref{finishGumbel}, 
\begin{align*}
Y_n &\succsim
\ceil*{\ln(n-un) + \gamma + \ln n + \gamma - \ln(un) - \gamma + G + {O}(1/n) } \nonumber \\ &= 
\ceil*{\ln n + \ln\left(\frac{n(1-u)}{un}\right)+\gamma + G+{O}(1/n)} \nonumber \\&\succsim
 \ceil*{\ln n +\ln(1/u-1) +  \gamma + G }. 
\end{align*} 
We now look at the upper bound in \eqref{eq:geom_bound}. For all $\floor{\ell n}\le i\le n-1$ we specify an appropriate bound for $L_i$. To obtain it, assume that $t$ is the round in which the $i$th vertex was informed. Then all batches that are linked to the $(i+1)$st coupon have size at least $|I_t|$, i.e.~$L_i\ge |I_t|$, as the $(i+1)$st distinct coupon  is drawn after the $i$th distinct coupon, i.e., it cannot be drawn in any round $t'< t$. However, we do not know $|I_t|$, but we certainly can say that $|I_{t+1}| \ge i$. So, Lemma~\ref{11}, guarantees that whp
\begin{align*}
|I_t|\ge \big(1-n^{-1/3}\big)  \cdot e\cdot \big(i-(1-1/e)n\big)  \quad \text{ for all } i \in \{\floor*{\ell n}, \dots, n-1\}.
\end{align*} 
(Note that $t = t(i)$ in that statement.) In particular, whp
\begin{align*}
L_i \ge |I_t| \ge \big(1-n^{-1/3}\big)\cdot (n - e\cdot (n-i)) \quad \text{ for all } i \in \{\floor*{\ell n}, \dots, n-1\}.
\end{align*}
Let $C$ be the event that Lemma \ref{11} conditions on, that is that $|I_t|$ (for all $t\in\N$) is closely concentrated around its expectation. Let $k\in \mathbb{N}$ and $$B=\left\{\ceil*{\sum\limits_{i=\ceil*{\ell n}}^{n-1} \frac{T_i}{L_i}}\ge k\right \}.$$
Then $P(B)=P(C\cap B)+o(1)$ and as $$\left \{C\cap B\ge k\right \}\Rightarrow\left \{\ceil*{\sum\limits_{i=\ceil*{\ell n}}^{n-1} \frac{T_i}{(1-n^{-1/3}) (n-e\cdot (n-i))}}\ge k\right\}$$ we get, recalling $Y_n=X_n-(\floor{\log_2 n}-a+b)$, that
\begin{align*}
 Y_n \leq \ceil*{\sum\limits_{i=\ceil*{\ell n}}^{n-1} \frac{T_i}{L_i}}\precsim \ceil*{\sum\limits_{i=\ceil*{\ell n}}^{n-1} \frac{T_i}{(1-n^{-1/3}) (n-e\cdot(n-i))}}.
\end{align*}
Again applying Lemma \ref{thm_approximate_gumbel_sum_of_deviations} and Theorem \ref{9}, for $G \sim \text{Gum}(\gamma)$ and $c=e$, we obtain
\begin{equation*}
\begin{split}
Y_n &\precsim \ceil*{\sum\limits_{i=\ceil*{\ell n}}^{n-1} \frac{\mathbb{E}[T_i]}{(1-n^{-1/3}) (n-e\cdot(n-i))}+ \sum\limits_{i=\ceil*{\ell n}}^{n-1}\frac{T_i-\mathbb{E}[T_i]}{(1-n^{-1/3})( n-e\cdot(n-i))} }\\ 
&\precsim 
\ceil*{\big(1+{O}(n^{-1/3})\big)\sum\limits_{i=\ceil*{\ell n}}^{n-1} \frac{1}{(n-e\cdot(n-i))(1-i/n)}+ G}. 
\end{split}
\end{equation*}
Let $c = 1 - 1/e$. Using that
${\big((n-e\cdot(n-i))(1-i/n)\big)^{-1}}=({n-i})^{-1}+{(i-cn)}^{-1}$ gives
\begin{align*}
Y_n\precsim \ceil*{\big(1+{O}(n^{-1/3})\big)\left(\sum\limits_{i=\ceil*{\ell n}}^{n-1} \frac{1}{n-i} + \sum\limits_{i=\ceil*{\ell n}}^{n-1} \frac{1}{i-cn}\right) + G}.  
\end{align*}
Using index shifts, the asymptotic expansion for the harmonic number (\ref{approximation_for_nth_harmonic_number}) and Lemma \ref{finishGumbel} yields
\begin{equation*}
\begin{split}
Y_n  & \precsim \ceil*{\big(1+{O}(n^{-1/3})\big)\left(\sum\limits_{i=1}^{n-\ceil*{\ell n}}\frac{1}{i} + \sum\limits_{i=\ceil*{\ell n}-\floor*{cn}}^{n-1-\floor*{cn}}\frac{1}{i}\right) + G + o(1)} \\ & \precsim
\ceil*{\ln n + \ln(1/\ell-1) - \ln(1/\ell) + \gamma + \ln\left(\frac{1-c}{\ell-c}\right)+ G}.  
\end{split}
\end{equation*}
\end{proof}
\subsection{Proof of Lemma \ref{c_is_continuous_push_complete}}
\label{proofs_of_the_preparation_results_push_exact_a}
In this subsection we investigate the double limit 
\begin{align*}
\lim\limits_{a \ra \infty, a \in \N} \lim\limits_{b \ra \infty, b \in \N} -a + b + \ln\big(g^{(b)}(1-2^{-a-x})\big)-x
\end{align*}
where $g(x)=xe^{x-1}$. We will show that this limit exists and defines a continuous function $c(x)$. It being periodic with period 1 is an immediate consequence of substituting $a\to a+1$ in the limit. A similar proof would also yield that $c$ is continuously differentiable, but we only need continuity in the proof of our main theorem. 

%We first collect some simple statements to help us show the convergence of the limit. Theorem \ref{dinis_second_theorem} provides a sufficient condition for uniform convergence of a sequence of functions.
%\begin{theorem}[{\cite[Nr. 127 on p. 81, proof on p. 270]{aeppli1997problems}}]
%\label{dinis_second_theorem}
%Let $a<b \in \R$ and let $(h_n)_{n\in\N}$ denote a sequence of monotonically increasing functions $h_n: [a,b] \ra \R$. If the sequence converges pointwise to a continuous function $h:[a,b]\ra \R$, then the convergence is uniform, i.e. $$\lim\limits_{n \to \infty}\sup\limits_{x \in [a,b]}\big|h_n(x)-h(x)\big|=0.$$
%\end{theorem} 
Before we actually prove Lemma \ref{c_is_continuous_push_complete} let us state
two auxiliary statements first. In Definition \ref{note_exp_fast_convergence}, we quantify ``exponentially fast convergence'' and in Lemma \ref{exp_fast_conv_to_zero_lemma} we state some simple properties.
\begin{definition}[Exponentially fast convergence]
\label{note_exp_fast_convergence}
Let $(a_n)_{n \in \N}$ be a real-valued sequence and let $c \in (0,1)$. If there is an $n_0 \in \N$ such that for all $n \geq n_0$ we have $|a_{n+1}|<c|a_n|$, then we say that $a_n$ \emph{converges exponentially fast to zero at rate $c$ with start number $n_0$}. 
\end{definition}
\begin{lemma}
\label{exp_fast_conv_to_zero_lemma}
\begin{itemize}
\item[a)] Let $c \in (0,1)$ and let $(a_n)_{n \in \N}$ be a real-valued sequence that converges exponentially fast to zero at rate $c$. Then $\sum_{n\geq 1}a_n$ converges absolutely. 
\item[b)] Let $c\in(0,1)$, $n_0\in \N$ and let $(h_n)_{n\in \N}$ denote a sequence of functions with $h_n:[0,1]\ra \R$ such that for any $x \in [0,1]$ the sequence $(h_n(x))_{n \in \N}$ converges exponentially fast to zero at rate $c$ with start number at most $n_0$. Define $h:[0,1] \ra \R$ by $h(x):=\sum_{n\geq 1}h_n(x)$. Then the sequence of functions $(\sum_{j=1}^nh_j)_{n \in \N}$ converges uniformly to $h$, i.e. 
$$\lim\limits_{n \to \infty}\sup\limits_{x \in [0,1]}\left |h(x)-\sum_{j=1}^nh_j(x)\right |=0.$$  
\end{itemize}
\end{lemma}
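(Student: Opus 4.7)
The plan is to reduce both parts to the standard geometric series estimate; no deep ideas are needed.

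For part (a), I would unpack the definition of exponentially fast convergence: there is an $n_0 \in \N$ with $|a_{n+1}|<c|a_n|$ for all $n\ge n_0$. A one-line induction then gives $|a_{n_0+k}|\le c^k|a_{n_0}|$, so
$$\sum_{n\ge n_0}|a_n|\le |a_{n_0}|\sum_{k\ge 0}c^k=\frac{|a_{n_0}|}{1-c}<\infty,$$
and the finitely many preceding terms contribute a finite amount, so $\sum_{n\ge 1} a_n$ converges absolutely.

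For part (b), the key observation is that since the start number in the hypothesis is bounded by $n_0$ uniformly in $x$, the same iteration holds uniformly: for every $x\in[0,1]$ and every $n\ge n_0$,
$$|h_n(x)|\le c^{n-n_0}|h_{n_0}(x)|.$$
Setting $M:=\sup_{x\in[0,1]}|h_{n_0}(x)|$, I would then apply the Weierstrass $M$-test with majorant $M c^{n-n_0}$ to conclude uniform convergence of $\sum h_n$ on $[0,1]$. Quantitatively,
$$\sup_{x\in[0,1]}\Bigl|h(x)-\sum_{j=1}^N h_j(x)\Bigr|\le \frac{M c^{N-n_0+1}}{1-c}\longrightarrow 0 \quad (N\to\infty),$$
which is exactly the desired uniform convergence.

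The one point to check—the only thing I would call an obstacle—is that $M=\sup_{x\in[0,1]}|h_{n_0}(x)|$ is finite. This is not spelled out in the hypothesis but is automatic whenever $h_{n_0}$ is continuous (or merely bounded) on the compact interval $[0,1]$. In every intended application of the lemma in this paper the $h_n$'s are continuous (being built from iterates of the smooth map $g$ on $[0,1]$), so no additional work is required there.
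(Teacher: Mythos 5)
Your proof matches the paper's: both parts are reduced to a geometric tail estimate, and the uniformity over $x\in[0,1]$ comes precisely from the fact that the start number is bounded by a fixed $n_0$ independent of $x$. You are also right to flag that the hypothesis does not literally supply $\sup_{x\in[0,1]}|h_{n_0}(x)|<\infty$; the paper's own proof silently makes the same assumption (it bounds the tail by an $x$-independent constant written $|a_{n_0}|$), and this is harmless in the paper's application since the $h_n$ there are continuous on the compact interval $[0,1]$.
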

\begin{proof}
a) is elementary.  We prove $b)$. Let $\varepsilon>0$. We show that there is an $n_1 \in \N$ such that for all $n\geq n_1$ and for all $x \in [0,1]$ it holds $\left |\sum_{j=1}^n h_j(x)-h(x)\right |< \varepsilon$. For $n \geq n_0$ it is 
$$\left |\sum_{j=1}^nh_j(x)-h(x)\right |=\bigg|\sum\limits_{j=n+1}^{\infty}h_n(x)\bigg |\leq \sum\limits_{j=n+1}^{\infty}|h_n(x)|\leq |a_{n_0}|\sum\limits_{j=n+1}^{\infty}c^j=|a_{n_0}| \frac{c^{n+1}}{1-c}$$ which implies that an $n_1$ as required exists.   
\end{proof}
\begin{proof}[Proof of Lemma \ref{c_is_continuous_push_complete}]
We show first, that for $a$ fixed and any $x\in[0,1]$ the limit $$\lim\limits_{b \ra \infty, b \in \N} b + \ln\left(g^{(b)}\left(1-2^{-a-x}\right)\right)$$ exists and the convergence is uniform. Inductively we get 
\begin{align}
 b + \ln\left(g^{(b)}\left(1-2^{-a-x}\right)\right) &= 
 b + \ln\left (g^{(b-1)}\left (1-2^{-a-x}\right )\right ) + g^{(b-1)}\left (1-2^{-a-x}\right ) - 1 \nonumber \\   
&= 1 + \ln\left (1-2^{-a-x}\right ) - 2^{-a-x} + \sum\limits_{j=1}^{b-1} g^{(j)}\left (1-2^{-a-x}\right ) \label{alternative_representation_for_c}
\end{align} 
which, according to Lemma \ref{exp_fast_conv_to_zero_lemma} $a)$, converges for $b\ra \infty$ because $g^{(j)}\left (1-2^{-a-x}\right )$ converges exponentially fast to zero at rate at most $\exp(-2^{-a-1})<1$ and start number $1$ for $j \ra \infty$ in the sense of Definition \ref{note_exp_fast_convergence}. For $x \in [0,1]$, according to Lemma \ref{exp_fast_conv_to_zero_lemma} $b)$, the convergence is even uniform with respect to $x$. By the Uniform Limit Theorem we thus showed that 
\begin{align}
\label{sequence_gamma_a_converges_push_complete}
\gamma_a(x)=-a + \sum\limits_{j \ge 1}g^{(j)}(1-2^{-a-x}) \quad \text{is continuous for } a \in\mathbb{N}. %and}\quad c(x)+x=\lim_{a\to \infty}\gamma_a(x).
\end{align}
To complete the proof we show that the sequence of continuous functions $(\gamma_a)_{a\in \N}$ converges uniformly. But first we make an observation.  Let $a'>a\in\N$ and $x\in[0,1]$, then, using $g(x)=1-f(1-x),$
\begin{align*}
\gamma_{a'}(x)&=-a'+\sum\limits_{j\ge 1} g^{(j)}(1-2^{-a'-x})=-a'+\sum\limits_{j=1}^{a'-a} g^{(j)}\big(1-2^{-a'-x}\big)+\sum\limits_{j\ge 1}^{\infty} g^{(j)}\left (g^{(a'-a)}\big(1-2^{-a'-x}\big)\right )\\
&=-a-\sum\limits_{j=1}^{a'-a} f^{(j)}\big(2^{-a'-x}\big)+\sum\limits_{j\ge 1}  g^{(j)}\left (1-f^{(a'-a)}\big(2^{-a'-x}\big)\right ).
\end{align*}
Furthermore, we can bound the repeated application of $f$ using Corollary \ref{boundf} and therefore
\begin{align*}
0\le \sum\limits_{j=1}^{a'-a} f^{(j)}\big(2^{-a'-x}\big)\le 2^{-a-x+1} ~\text{and}~ 2^{-a-x}\big(1-2^{-a-x}-2^{-2a-2x}\big) \le f^{(a'-a)}\big(2^{-a'-x}\big)\le2^{-a-x}.
\end{align*}
Thus there is $x'\in [0,1]$ such that $|x-x'|\le 2^{-a}$ and $\gamma_{a'}(x)=\gamma_a(x')+{O}\big(2^{-a}\big).$

With this at hand we show uniform convergence of $(\gamma_a)_{a\in\N}$. In particular, for any $0<\varepsilon<1/8$ we will show that there is some $N\in \N$ such that $\sup_{x\in [0,1]}|\gamma_a(x)-\gamma_{a'}(x)|\le \varepsilon$ for all $a' > a>N$. To achieve this we use our previous observation and obtain that
\begin{align*}
\sup_{x\in [0,1]}|\gamma_a(x)-\gamma_{a'}(x)|&\le \sup_{x\in [0,1],|x-x'|\le 2^{-a}}|\gamma_a(x)-\gamma_{a}(x')|+{O}\big(2^{-a}\big)\\
&= \sup_{x\in [0,1],|x-x'|\le 2^{-a}}\left |\sum_{j \ge 1} \big(g^{(j)}\big(1-2^{-a-x'}\big)-g^{(j)}\big(1-2^{-a-x}\big)\big)\right |+O\big(2^{-a}\big).
\end{align*}
We bound this sum by splitting it into three parts. There is $M_1\in\N$ such that for any $a>M_1 $ there is $N_1\in \N$ ($N_1$ depending on $a$ and $\varepsilon$) such that 
\begin{align}\label{eq:N1}
\varepsilon\le f^{(N_1)}\big (2^{-a-1}\big)\le f^{(N_1+1)}\big (2^{-a}\big)\le 8\varepsilon.
\end{align}
That is, $N_1$ is the number of iterations such that $f^{(N_1)}\big(2^{-a}\big)\approx \varepsilon$, in particular $N_1\le a$, as $f^{(a)}\big (2^{-a-1}\big)\ge 1/8$ by Corollary~\ref{boundf} and the fact that $f$ is increasing. Furthermore, using again that $g^{(j)}\left (1-2^{-a-x}\right )$ converges exponentially fast to zero with rate at most $\exp(-2^{-a-1})<1$ for $j \ra \infty$, there is $c\in\N$ depending only on $\varepsilon$ such that for $N_2:= N_1+c$ 
\begin{align}\label{eq:N2}
0\le \sup_{x\in [0,1]} \sum\limits_{j\ge N_2} g^{(j)}\big(1-2^{-a-x}\big)\le \varepsilon\quad \text{for all } a>M_1.
\end{align}
Then, abbreviating $h^{(j)}=g^{(j)}\big(1-2^{-a-x'}\big)-g^{(j)}\big(1-2^{-a-x}\big)$, we can write
\begin{align}\label{eq:splitSum}
\sum\limits_{j=1}^{\infty}\big(g^{(j)}\big(1-2^{-a-x'}\big)-g^{(j)}\big(1-2^{-a-x}\big)\big)=\sum\limits_{j=1}^{N_1}h^{(j)}+\sum\limits_{j=N_1+1}^{N_2}h^{(j)}+\sum\limits_{j>N_2} h^{(j)}.
\end{align}
In the rest of the proof estimate these sums individually, starting with the first one. Again using \eqref{alternative_representation_for_c} and $f(x)=1-g(1-x)$ we have as $a\to\infty$
\begin{align*}
\sum\limits_{j=1}^{N_1}g^{(j)}\big(1-2^{-a-x'}\big)-g^{(j)}&\big(1-2^{-a-x}\big)=\ln g^{(N_1)}\big(1-2^{-a-x'}\big)- \ln g^{(N_1)}\big(1-2^{-a-x}\big)+{O}(2^{-a})\\
&=\ln \left (1- f^{(N_1)}\big (2^{-a-x'}\big)\right )- \ln\left ( 1-f^{(N_1)}\big (2^{-a-x}\big)\right )+O(2^{-a}).
\end{align*}
By our choice of $N_1$, see~\eqref{eq:N1}, and the elementary inequalities $z/(1+z)\le \ln (1+z)\le z$ for all $z>-1$ this yields the upper bound
\begin{align}\label{eq:firstUpper}
\sup_{x\in[0,1]}\left |\sum\limits_{j=1}^{N_1}h^{(j)}\right |\le \varepsilon+\frac{8\varepsilon}{1+8\varepsilon}+{O}(2^{-a})\quad \text{for all } a>M_1.
\end{align}
%We continue with the second sum in \eqref{eq:splitSum}. Observe that $f(r+s)\le f(r)+2s$ for all $r,s\in [0,1/2]$ and therefore
%A short observation first: the function $\sum_{j=N_1+1}^{N_2}g^{(j)}(z) $ is (uniformly) continuous for all $z\in(0,1)$, as $g$ is continuous on a compact set, thus uniformly continuous, and finite sums and compositions of (uniformly) continuous functions remain (uniformly) continuous. \marginpar{\tiny erklären warum} Therefore, there is some $\delta>0$ such that 
%\begin{align}\label{eq:sumContinuity}
%\left |\sum_{j=N_1+1}^{N_2}g^{(j)}(z) -\sum_{j=N_1+1}^{N_2}g^{(j)}(z')\right |= \left |\sum_{j=N_1+1}^{N_2}f^{(j)}(1-z) -f^{(j)}(1-z')\right |\le |z-z'|\sum_{j=N_1+1}^{N_2}2^j
%\end{align}
%To apply this bound we need to show that $\big|g^{(N_1)}\big(1-2^{-a-x}\big)-g^{(N_1)}\big(1-2^{-a-x'}\big)\big|$ is small if we choose $|x-x'|$ small enough. As $f$ is a monotone function and $0\le x\le x'$ we have inductively
%\begin{align*}
%1\ge \frac{f^{(N_1)}\big(2^{-a-x'}\big)}{f^{(N_1)}\big(2^{-a-x}\big)}\ge \frac{f^{(N_1-1)}\big(2^{-a-x'}\big)}{f^{(N_1-1)}\big(2^{-a-x}\big)}\ge \frac{2^{-a-x'}}{2^{-a-x}}=2^{x-x'}
%\end{align*}
%and therefore
%\begin{align*}
%\big|g^{(N_1)}\big(1-2^{-a-x}\big)-g^{(N_1)}\big(1-2^{-a-x'}\big)\big|=\big|f^{(N_1)}\big(2^{-a-x}\big)-f^{(N_1)}\big(2^{-a-x'}\big)\big|\le 1-2^{x-x'}.
%\end{align*}
We continue with the second sum in \eqref{eq:splitSum}.
Corollary \ref{fAdditive} yields
\begin{equation*}
\begin{aligned}
\left |\sum_{j=N_1+1}^{N_2}h^{(j)}\right |
&=\left |\sum_{j=N_1+1}^{N_2}\big(f^{(j)}\big(2^{-a-x}\big) -f^{(j)}\big(2^{-a-x'}\big)\big)\right |
\le \left( 2^{-a-x}-2^{-a-x'}\right )\sum_{j=N_1+1}^{N_2}2^j.
\end{aligned}
\end{equation*}
Thus, as $N_1\le a$ and $N_2=N_1+c$, where $c$ depends on $\varepsilon$ only, and our assumption $|x-x'|\le 2^{-a}$ there is $M_2\ge M_1$ such that 
\begin{equation}
\begin{aligned}\label{eq:secondUpper}
\left |\sum_{j=N_1+1}^{N_2}h^{(j)}\right |&\le \left (2^{2^{-a}}-1\right )\cdot 2^{-a}\cdot \sum_{j=N_1+1}^{N_2}2^j\le \left (2^{2^{-a}}-1\right )\cdot2^{c+1}\le \varepsilon\quad \text{for all }a>M_2.
\end{aligned}
\end{equation}
In summary, \eqref{eq:splitSum} gives 
\begin{equation*}
\begin{aligned}
\sup_{x\in [0,1]}\big|\gamma_a(x)-\gamma_{a'}(x)\big|&\le \sup_{x\in [0,1],|x-x'|\le 2^{-a}}\left |\sum\limits_{j=1}^{N_1}h^{(j)}+\sum\limits_{j=N_1+1}^{N_2}h^{(j)}+\sum\limits_{j>N_2} h^{(j)}\right |+{O}\big(2^{-a}\big).
\end{aligned}
\end{equation*}
and for $a>M_2>M_1$, applying \eqref{eq:firstUpper}, \eqref{eq:secondUpper} and \eqref{eq:N2} yields the uniform convergence of $(\gamma_a)_{a\in\N}$.
\end{proof}

\subsection{Other Proofs}
\label{proofs_of_the_preparation_results_push_exact}
In this subsection we complete the rigorous treatment of our main theorems and give the last two remaining proofs. First we prove Lemma \ref{uniform_integrability}, which states that $X_n-\floor{\log_2 n}-\floor{\ln n}$ is uniformly integrable.
\begin{proof}[Proof of Lemma \ref{uniform_integrability}]
Doerr and Künnemann show in \cite[Cor.~3.2 and Thm.~4.1]{doerr_tight_2014} that for all $r\in \mathbb{N}$
\begin{align*}
&P\big[X_n\ge \floor{\log_2 n}+\ln n+2.188+r\big]\le 2e^{-r}\quad\text{and}\\ 
&P\big[X_n\le r\big ]\le P\left [\floor{\log_2 n}-1+\frac{C_n(\ceil{n/2})}{n}\le r\right ],
\end{align*}
 where $C_n(\ceil{n/2})$ is the number of rounds a coupon collector needs to draw the last $n/2$ out of $n$ coupons. These two bounds together with common deviation bounds for the coupon collector problem imply, see e.g.~\cite{erdos1961classical}, that
$$P[Y_n\notin \floor{\log_2 n}+\floor{\ln n} \pm (r+5)]\le 4e^{-r}.$$
Using this inequality we get that for any $N\in \mathbb{N}$
$$\mathbb{E}\Big[|Y_n|^k~\Big|~\mathbbm{1}\big[|Y_n|^k>N\big]\Big]\le \sum _{t\ge \sqrt[k]{N}}(t+5)^k4e^{-t},$$
which implies the claim.
\end{proof}
We close the section with the proof  of Lemmas~\ref{u2_l2_close_enough} and \ref{finishGumbel}. %Lemma \ref{u2_l2_close_enough} states that the error terms in the bound on the distribution of \emph{push} are small and therefore both bounds are very close.
\begin{proof}[Proof of Lemma \ref{u2_l2_close_enough}]
First we observe that the $(1-n^{-1/6})$ error term in the definition of $\ell, u$ is negligible as is factors out as a small additional term. Thus it suffices to consider  $\ell=f^{(b)}(L)$ and $u=f^{(b)}(U)$ where $ L =\left (1-2^{-a+2}\right )2^{-a-x}$ and $U=2^{-a-x}$ for some $x\in(0,1]$. We assume that~$a \ge 3$.

We start by showing an analogue to Corollary \ref{fAdditive} but concerning $g$. For all $r\ge s\in [0,1]$, using $1-x\le e^{-x}$,
\begin{align*}
g(r-s)=(r-s)e^{r-s-1}\ge re^{r-s-1}-se^{r-1}\ge g(r)-s(1+r)e^{r-1}
\end{align*}
and consequently
\begin{align}\label{eq:gExp}
g^{(i)}(r-s)\ge g^{(i)}(r)-s\big((1+r)e^{r-1}\big)^i\quad \text{for all } r\ge s\in [0,1) \text{ and } i\in\N.
\end{align}
This completes our preparations. In order to show that $(1-\ell)/(1-u)\to 1$ as $a\to\infty$ we argue that $\ell$ and $u$ are very close together and approach 1 as $a$ (and $b > 2a$) gets big. We start by bounding the distance between $\ell$ and $u$. Applying Corollary \ref{fAdditive} to $U=L+2^{-2a-x+2}$ we get that
\begin{align}\label{fAdditiveAppl}
f^{(a)}\left (U\right )=f^{(a)}\left (L+2^{-2a-x+2}\right )\le f^{(a)}\left (L\right )+2^{-a-x+2}
\end{align}
and Corollary \ref{boundf}  bounds $f^{(a-1)}(U)$ from below with $2^{-x-1}\left (1-2^{-x-1}-2^{-2x-2}\right )\ge 1/8$, thus $f^{(a+2)}(U)\ge 1/2$, and therefore we get using the monotonicity of~$f$
\begin{align}\label{eq:boundF}
\frac{1}{2}\le f^{(a+2)}(U) \le f^{(a+3)}\left (L\right )\le f^{(a+3)}\left (U\right )\overset{\eqref{fAdditiveAppl}}{\le} f^{(a+3)}\left (L\right )+2^{-a-x+5}.
\end{align}
We switch our focus to $g$. Observe that $z:=3e^{-1/2}/2<1$ and, using \eqref{eq:gExp}, 
\begin{align*}
g^{(b-a-3)}\left (1-f^{(a+3)}\left (L\right )-2^{-a-x+5}\right )\ge g^{(b-a-3)}\left (1-f^{(a+3)}\left (L\right )\right )-2^{-a-x+5}\cdot z^{b-a-3}.
\end{align*}
This implies, using \eqref{eq:boundF} and the previous equation, that
\begin{align*}
g^{(b-a-3)}\left (1-f^{(a+3)}\left (U\right )\right)
&\ge g^{(b-a-3)}\left (1-f^{(a+3)}\left (L\right )\right )-2^{-a-x+5}\cdot z^{b-a-3},
\end{align*}
and therefore, as
$1-f^{(b)}(L)\ge1- f^{(b)}(U)=g^{(b-a-3)}\left (1-f^{(a+3)}\left (U\right )\right )$,
\begin{align}\label{eq:diff0}
|u-\ell| = |f^{(b)}(U) - f^{(b)}(L)| \le 2^{-a-x+5}z^{b-a-3}\to 0 \quad \text{as } a\to \infty, b-a\to \infty.
\end{align} 
Next we show that $u,\ell$ approach 1. Using $g(x)=1-f(1-x)$, \eqref{eq:boundF}, $g$ being increasing and $g(x)\le xe^{-1/2}$ for all $x\le 1/2$ (in that order), we get
\begin{align*}
g^{(b)}(1-L)=g^{(b-a-3)}\left (1-f^{(a+3)}\left (L\right )\right )\le g^{(b-a-3)}\left (\frac{1}{2}\right ) \le \frac{1}{2}e^{-(b-a-3)/2}, \quad b > a+3.
\end{align*}
Moreover, using that $f(x)\le 2x$ and $g(x)\ge x/e$,
\begin{align*}
g^{(b)}(1-U)=g^{(b-a)}\left (1-f^{(a)}\left (U\right )\right )\ge \big(1-2^{-x}\big)e^{-(b-a)}.
\end{align*}
Thus, these two bounds together give
\begin{align}\label{eq:diff1}
1- \frac{1}{2}e^{-(b-a-3)/2}\le f^{(b)}\left (L\right )\le f^{(b)}\left (U\right )\le 1-\big(1-2^{-x}\big)e^{-(b-a)}\quad\text{for all } b>a + 3.
\end{align}
We just showed that $u,\ell\to 1$  as $a$ (and $b$) tends to infinity. This yields that $\ln u,\ln \ell$ and $\ln \big(\ell/(e\ell-e+1)\big)$ tend to 0, leaving us with the term $\ln\big((1-\ell)/(1-u)\big)$.
%Using \eqref{eq:diff0} and \eqref{eq:diff1} we get
%\begin{align*}
%\left |\frac{1-\ell}{1-u} -1 \right |= \frac{|\ell - u|}{1-u}\le \frac{2^{-a-x+6}z^{b-a-4}}{(1-2^{-x})e^{-(b-a)}}= \frac{2^{6-x}z^{-4}}{1-2^{-x}}\cdot 2^{-a}\left (5e^{1/4}/4\right )^{b-a}
%\end{align*}
%as $z= 5e^{1/4-1}/4$. Since $5e^{1/4}/4<2$ and $b=2a$ we obtain $\frac{1-\ell}{1-u}\to 1$ and consequently  $\ln\big((1-\ell)/(1-u)\big)\to 0.$
%{\color{red} und dann mit 3.21 und 3.22?}
%Thus inductively we get for any constant $a+2<c\in \mathbb{N}$
%\begin{align*}
%\frac{1-\ell}{1-u}\le \frac{\exp\big({g^{(c)}(1-L)-1}\big)}{\exp\big({g^{(b-1)}(1-U)-1}\big)}\cdot \frac{g^{(c)}\big(1-L\big)}{g^{(c)}\big(1-U\big)}\le {\exp\big({g^{(c)}(1-L)}\big)}\cdot \frac{g^{(c)}\big(1-L\big)}{g^{(c)}\big(1-U\big)}.
%\end{align*}
%Using \eqref{eq:diff0} and \eqref{eq:diff1}, i.e. for all $c>a+2$ it is $\big|g^{(c)}(1-L) - g^{(c)}(1-U)\big| \le 2^{-a-x+5}z^{c-a-3}$ as well as $g^{(c)}\big(1-U\big)\ge \big(1-2^{-x}\big)e^{-(c-a)}$ and $f^{(c)}(L)\ge 1-e^{-(c-a-3)/2}/2 $, we get for all $c>a+2$
%\begin{align*}
%\frac{1-\ell}{1-u}&\le \exp\left (1-f^{(c)}(L)\right )\left (1+\frac{2^{-a-x+5}z^{-c-a-3}}{g^{(c)}\big(1-U\big)}\right )\\
%&\le \exp\left (\frac{1}{2}e^{-(c-a-3)/2}\right )\left (1+\frac{2^{-a-x+5}z^{-c-a-3}}{(1-2^{-x})e^{-c-a}}\right )\\
%\end{align*}
%Thus setting $c=a(1+\ln2)$ and using $e^x\le 1+2x, x\in [0,1]$ this yields the bounds 
%\begin{align*}
%1\le \frac{1-\ell}{1-u} \le \left (1+\sqrt{2}^{-a/2+3}\right )\left (1+\frac{2^{-x+5}}{1-2^{-x}}\cdot z^{-a\ln 2 -3}\right )\quad\text{for all }a\in\N.
%\end{align*}
The fact $U\le f(L)$ (and so $f^{(b-2)}(U)\le f^{(b-1)}(L)$) implies that 
\begin{align*}
\frac{1-\ell}{1-u}&= \frac{g^{(b)}\big(1-L\big)}{g^{(b)}\big(1-U\big)}= \frac{\exp\big({g^{(b-1)}(1-L)-1}\big)\cdot \exp\big({g^{(b-2)}(1-L)-1}\big)}{\exp\big({g^{(b-1)}(1-U)-1}\big)\cdot \exp\big({g^{(b-2)}(1-U)-1}\big)}\cdot \frac{g^{(b-2)}\big(1-L\big)}{g^{(b-2)}\big(1-U\big)}\\
&\le \frac{\exp\big({g^{(b-2)}(1-L)-1}\big)}{\exp\big({g^{(b-1)}(1-U)-1}\big)}\cdot \frac{g^{(b-2)}\big(1-L\big)}{g^{(b-2)}\big(1-U\big)}.
\end{align*}
Applying the same estimate to the latter fraction inductively we get for any $c \in\N$
\begin{align*}
\frac{1-\ell}{1-u}\le \frac{\exp\big({g^{(c)}(1-L)-1}\big)}{\exp\big({g^{(b-1)}(1-U)-1}\big)}\cdot \frac{g^{(c)}\big(1-L\big)}{g^{(c)}\big(1-U\big)}\le {\exp\big({g^{(c)}(1-L)}\big)}\cdot \frac{g^{(c)}\big(1-L\big)}{g^{(c)}\big(1-U\big)}.
\end{align*}
Set $c=\ceil{a(1+\ln2)}$. Using \eqref{eq:diff0} and \eqref{eq:diff1}, where we set $b=c$, we obtain (for large enough $a$) that $\big|g^{(c)}(1-L) - g^{(c)}(1-U)\big| \le 2^{-a-x+5}z^{c-a-3}$ as well as $f^{(c)}\big(U\big)\le 1-\big(1-2^{-x}\big)e^{-(c-a)}$ and $f^{(c)}(L)\ge 1-e^{-(c-a-3)/2}/2 $. Thus
\begin{align*}
\frac{1-\ell}{1-u}&\le \exp\left (1-f^{(c)}(L)\right )\left (1+\frac{2^{-a-x+5}z^{c-a-3}}{1-f^{(c)}\big(U\big)}\right )\\
&\le \exp\left (\frac{1}{2}e^{-(c-a-3)/2}\right )\left (1+\frac{2^{-a-x+5}z^{c-a-3}}{(1-2^{-x})e^{-c+a}}\right ).
\end{align*}
Using $e^x\le 1+2x, x\in [0,1]$ this yields the bounds 
\begin{align*}
1\le \frac{1-\ell}{1-u} \le \left (1+\sqrt{2}^{-a+4}\right )\left (1+\frac{2^{-x+5}e}{1-2^{-x}}\cdot z^{a\ln 2 -2}\right )\quad\text{for all }a\in\N.
\end{align*}
Therefore, as $0 < z < 1$ we obtain $\frac{1-\ell}{1-u}\to 1$, and consequently  $\ln\big((1-\ell)/(1-u)\big)\to 0,$ as $a\to \infty.$

\end{proof}
Lemma \ref{finishGumbel} states that disturbing a Gumbel distributed random variable by a small amount does not significantly alter its distribution.
\begin{proof}[Proof of Lemma \ref{finishGumbel}]
Observe that $\big\lceil h(n) + G \pm \varepsilon\big\rceil\neq \big\lceil h(n) + G \big\rceil$
is equivalent to
\begin{align*}
 G\in \big[j-h(n)-\varepsilon, ~j-h(n) +\varepsilon\big]\quad\text{for some }j\ge 1.
\end{align*}
But as $G$ is absolutely continuous, for any $\delta>0$ we can choose $\varepsilon$ small enough such that 
\begin{align*}
P\left [G\in \bigcup_{j\ge 1}\big[j-h(n)-\varepsilon, j-h(n) +\varepsilon\big]\right ]\le \delta.
\end{align*}
%This directly gives for all $k\in\mathbb{N}$
%\begin{align}\label{eq:previous}
%\left |P[X_{n} \geq k]-P\Big[\big\lceil G + \log_2n+\ln n+\gamma+c(\{\log_2n\})\big\rceil \geq k\Big]\right | =o(1), 
%\end{align}
%which establishes Lemma \ref{main_theorem_push_complete}.
\end{proof}

\phantomsection
{
\bibliographystyle{abbrv}
%\small
\bibliography{literatur}

\begin{thebibliography}{10}

\bibitem{birman1999bimodal}
K.~P. Birman, M.~Hayden, O.~Ozkasap, Z.~Xiao, M.~Budiu, and Y.~Minsky.
\newblock {Bimodal multicast}.
\newblock {\em ACM Transactions on Computer Systems (TOCS)}, 17(2):41--88,
  1999.

\bibitem{boucheron2013concentration}
S.~Boucheron, G.~Lugosi, and P.~Massart.
\newblock {\em {Concentration inequalities: A nonasymptotic theory of
  independence}}.
\newblock Oxford University Press, 2013.

\bibitem{chierichetti_almost_2010}
F.~Chierichetti, S.~Lattanzi, and A.~Panconesi.
\newblock {Almost tight bounds for rumour spreading with conductance}.
\newblock In {\em {Proceedings of the forty-second {ACM} symposium on Theory of
  computing}}, pages 399--408. {ACM}, 2010.

\bibitem{chierichetti_rumour_2010}
F.~Chierichetti, S.~Lattanzi, and A.~Panconesi.
\newblock {Rumour spreading and graph conductance}.
\newblock In {\em {Proceedings of the twenty-first annual {ACM}-{SIAM}
  symposium on Discrete Algorithms}}, pages 1657--1663. {SIAM}, 2010.

\bibitem{Daknama2019}
R.~Daknama, K.~Panagiotou, and S.~Reisser.
\newblock {Robustness of Randomized Rumour Spreading}.
\newblock In {\em {27th Annual European Symposium on Algorithms (ESA 2019)}},
  volume 144 of {\em {Leibniz International Proceedings in Informatics
  (LIPIcs)}}, pages 36:1--36:15, Dagstuhl, Germany, 2019. Schloss
  Dagstuhl--Leibniz-Zentrum fuer Informatik.

\bibitem{demers_epidemic_1987}
A.~Demers, D.~Greene, C.~Hauser, W.~Irish, J.~Larson, S.~Shenker, H.~Sturgis,
  D.~Swinehart, and D.~Terry.
\newblock {Epidemic algorithms for replicated database maintenance}.
\newblock In {\em {Proceedings of the sixth annual {ACM} Symposium on
  Principles of distributed computing}}, pages 1--12. {ACM}, 1987.

\bibitem{doerr_tight_2014}
B.~Doerr and M.~K{\"u}nnemann.
\newblock {Tight analysis of randomized rumor spreading in complete graphs}.
\newblock In {\em {2014 Proceedings of the Eleventh Workshop on Analytic
  Algorithmics and Combinatorics ({ANALCO})}}, pages 82--91. {SIAM}, 2014.

\bibitem{erdos1961classical}
P.~Erd\H{o}s and A.~R{\'e}nyi.
\newblock {On a classical problem of probability theory}.
\newblock {\em Magyar Tud. Akad. Mat. Kutat{\'o} Int. K{\"o}zl. 6}, pages
  215--220, 1961.

\bibitem{feige_randomized_1990}
U.~Feige, D.~Peleg, P.~Raghavan, and E.~Upfal.
\newblock {Randomized broadcast in networks}.
\newblock {\em Random Structures \& Algorithms}, 1(4):447--460, 1990.

\bibitem{fountoulakis_reliable_2010}
N.~Fountoulakis, A.~Huber, and K.~Panagiotou.
\newblock {Reliable Broadcasting in Random Networks and the Effect of Density}.
\newblock In {\em {{INFOCOM} 2010. 29th {IEEE} International Conference on
  Computer Communications, Joint Conference of the {IEEE} Computer and
  Communications Societies, 15-19 March 2010, San Diego, CA, {USA}}}, pages
  2552--2560, 2010.

\bibitem{fountoulakis_rumor_2010}
N.~Fountoulakis and K.~Panagiotou.
\newblock {Rumor spreading on random regular graphs and expanders}.
\newblock {\em Approximation, Randomization, and Combinatorial Optimization.
  Algorithms and Techniques}, pages 560--573, 2010.

\bibitem{frieze_the_1985}
A.~M. Frieze and G.~R. Grimmett.
\newblock {The shortest-path problem for graphs with random arc-lengths}.
\newblock {\em Discrete Applied Mathematics}, 10(1):57--77, 1985.

\bibitem{giakkoupis_tight_2011}
G.~Giakkoupis.
\newblock {Tight bounds for rumor spreading in graphs of a given conductance}.
\newblock In {\em {28th International Symposium on Theoretical Aspects of
  Computer Science, {STACS} 2011, March 10-12, 2011, Dortmund, Germany}}, pages
  57--68, 2011.

\bibitem{giakkoupis_tight_2014}
G.~Giakkoupis.
\newblock {Tight Bounds for Rumor Spreading with Vertex Expansion}.
\newblock In {\em {Proceedings of the Twenty-Fifth Annual {ACM-SIAM} Symposium
  on Discrete Algorithms, {SODA} 2014, Portland, Oregon, USA, January 5-7,
  2014}}, pages 801--815, 2014.

\bibitem{giakkoupis_rumor_2012}
G.~Giakkoupis and T.~Sauerwald.
\newblock {Rumor spreading and vertex expansion}.
\newblock In {\em {Proceedings of the Twenty-Third Annual {ACM-SIAM} Symposium
  on Discrete Algorithms, {SODA} 2012, Kyoto, Japan, January 17-19, 2012}},
  pages 1623--1641, 2012.

\bibitem{hardy1979introduction}
G.~H. Hardy, E.~M. Wright, et~al.
\newblock {\em {An introduction to the theory of numbers}}.
\newblock Oxford University Press, 1979.

\bibitem{mosk-aoyama_fast_2008}
D.~Mosk{-}Aoyama and D.~Shah.
\newblock {Fast Distributed Algorithms for Computing Separable Functions}.
\newblock {\em {IEEE} Trans. Information Theory}, 54(7):2997--3007, 2008.

\bibitem{Panagiotou2015}
K.~Panagiotou, X.~P{\'e}rez{-}Gim{\'e}nez, T.~Sauerwald, and H.~Sun.
\newblock {Randomized Rumour Spreading: The Effect of the Network Topology}.
\newblock {\em Combinatorics, Probability {\&} Computing}, 24(2):457--479,
  2015.

\bibitem{8975840}
C.~{Patsonakis} and M.~{Roussopoulos}.
\newblock {Revisiting Asynchronous Rumor Spreading in the Blockchain Era}.
\newblock In {\em {2019 IEEE 25th International Conference on Parallel and
  Distributed Systems (ICPADS)}}, pages 284--293, Dec 2019.

\bibitem{pittel_on_1987}
B.~Pittel.
\newblock {On Spreading a Rumor}.
\newblock {\em SIAM J. Appl. Math.}, 47(1):213--223, 1987.

\bibitem{polya1920}
G.~P{\'o}lya.
\newblock {{\"U}ber den zentralen {G}renzwertsatz der
  {W}ahrscheinlichkeitsrechnung und das {M}omentenproblem}.
\newblock {\em Mathematische Zeitschrift}, 8(3):171--181, 1920.

\bibitem{sauerwald_rumor_2011}
T.~Sauerwald and A.~Stauffer.
\newblock {Rumor Spreading and Vertex Expansion on Regular Graphs}.
\newblock In {\em {Proceedings of the Twenty-Second Annual {ACM-SIAM} Symposium
  on Discrete Algorithms, {SODA} 2011, San Francisco, California, USA, January
  23-25, 2011}}, pages 462--475, 2011.

\bibitem{serfling2009approximation}
R.~Serfling.
\newblock {\em {Approximation Theorems of Mathematical Statistics}}.
\newblock {Wiley Series in Probability and Statistics}. Wiley, 2009.

\end{thebibliography}
}
\appendix
\section{Existence of Subsequence}
Let $x,y\in [0,1]$. In this section we show that there is an unbounded sequence of natural numbers $(n_i)_{i\in\N}$ such that $\log_2 n_i-\floor{\log_2 n_i}\to x$ and $\ln n_i -\floor{\ln n_i}\to y$ as $i\to \infty$.
To this end, set $z=y-x\ln 2$. According to a Theorem of Kronecker, see e.g. \cite[Thm.~440]{hardy1979introduction}, for all $i\in \mathbb{N}$, there are $p_{i},q_{i}\in\mathbb{N}$ such that
\begin{align}\label{eq:Kroecker}
\big|q_{i}\ln 2-p_{i}-z\big|\le i^{-1}.
\end{align}
Actually even more is true: there are infinitely many $p_{i},q_{i} \in \mathbb{N}$ that solve \eqref{eq:Kroecker}. To see this, assume that there are only finitely many, then there is $k,\ell\in \mathbb{N}$ such that $k\ln 2=\ell+z$, otherwise there would be some $i\in \N$ where \eqref{eq:Kroecker} has no solution. However, according to a Theorem of Hurwitz, see e.g.~\cite[Thm.~193]{hardy1979introduction}, there are infinitely many $r_{j},s_{j}\in\mathbb{N}$ such that 
$$\big|r_{j}\ln 2-s_{j}\big|\le r_{j}^{-2}.$$
But then
$$\big|r_{j}\ln 2-s_{j}\big|=\big|(r_{j}+k)\ln 2-(s_{j}+\ell)-z\big|\le r_{j}^{-2},$$
a contradiction, thus there are infinitely many solutions to \eqref{eq:Kroecker}. 
We continue with that equation, which we can restate, as $i\to\infty$,
%\marginpar{\tiny as $N \to \infty$?!}
%\begin{align*}
%\ln 2=\frac{p_N-z}{q_N}+O\left (\frac{1}{Nq_n}\right ),
%\end{align*}
%which in turn implies that
\begin{align*}
q_{i}\ln 2+x\ln 2=p_{i}+y+O\left ({i}^{-1}\right ).
\end{align*}
Taking the exponential on both sides thus yields, as ${i}\to\infty$, 
\begin{align*}
2^{q_{i}+x}=e^{p_{i}+y+O({i}^{-1})}.
\end{align*}
Set $n_{i}=\floor{2^{q_{i}+x}}$ for all $i\in \N$, where we choose $q_i$ such that  $q_{i}\ge i$ from the infinitely many solutions to \eqref{eq:Kroecker}. Then $n_i\in\N$ for all $i\in \N$ and \begin{align*}
\log_2 n_{i}-\floor{\log_2 n_{i}}=x+O\big(2^{-i}\big)\quad \text{ as well as}\quad \ln n_{i}-\floor{\ln n_{i}}=y+O\left ({i}^{-1}\right ).
\end{align*}
Thus the subsequence of natural numbers that is induced by $\log_2 n_{i}-\floor{\log_2 n_{i}}\to x$ and $\ln n_{i}-\floor{\ln n_{i}}\to y$ is non-empty and unbounded. 
 
\end{document}